\documentclass[11pt]{article}
\usepackage{amssymb,amsmath,amsfonts,amsthm,mathtools,color}

\usepackage{mathrsfs}
\usepackage{bm} 

\usepackage{comment} 
\usepackage[title,titletoc,header]{appendix}
\usepackage{graphicx,subfig}

\usepackage{paralist}

\usepackage{tikz}
\usetikzlibrary{arrows,positioning,shapes.geometric}

\usepackage[ruled,vlined]{algorithm2e}

\graphicspath{ {./figures/} }

\usepackage[utf8]{inputenc}

\usepackage{indentfirst}
\usepackage{multicol}
\usepackage{booktabs}
\usepackage{url}
\usepackage[outdir=./]{epstopdf} 
\usepackage{stmaryrd}
\usepackage[shortlabels]{enumitem}

\usepackage{hyperref}
\hypersetup{
    colorlinks=true, 
    linktoc=all,     
    linkcolor=blue,  
}
\setlength\topmargin{-2cm} \setlength\textheight{230mm}
\setlength\oddsidemargin{0mm}
\setlength\evensidemargin\oddsidemargin \setlength\textwidth{163mm}
\setlength\baselineskip{18pt}


\numberwithin{equation}{section}

\newtheorem{Theorem}{Theorem}[section]
\newtheorem{Lemma}[Theorem]{Lemma}

\newtheorem{Assumption}{H.\!\!}

\theoremstyle{definition}
\newtheorem{Definition}{Definition}[section]

\newtheorem{Example}{Example}[section]

\theoremstyle{remark}
\newtheorem{Remark}{Remark}[section]

\def\to{\rightarrow}

\newcommand{\q}{\quad}   \newcommand{\qq}{\qquad}

\def\del{\partial} 
\def\eps{\varepsilon}

 \def\bs{\bigskip}

\def\bs{\boldsymbol}

\def\cA{\mathcal{A}}
\def\cB{\mathcal{B}}

\def\cD{\mathcal{D}}

\def\cF{\mathcal{F}}
\def\cG{\mathcal{G}}
\def\cH{\mathcal{H}}

\def\cJ{\mathcal{J}}

\def\cL{\mathcal{L}}
\def\cM{\mathcal{M}}

\def\cO{\mathcal{O}}

\def\cQ{\mathcal{Q}}

\def\cU{\mathcal{U}}

\def\d{{\mathrm{d}}}

\DeclareMathOperator*{\smaxld}{\textit{S}^{\lambda}_{\max}}

\DeclareMathOperator*{\Tr}{Tr}

\def\sE{{\mathbb{E}}}

\def\sI{{\mathbb{I}}}
\def\sN{{\mathbb{N}}}
\def\sP{\mathbb{P}}

\def\sR{{\mathbb R}}
\def\sS{{\mathbb{S}}}

\DeclareMathOperator*{\argmax}{arg\,max}
\DeclareMathOperator*{\argmin}{arg\,min}

\newcommand{\lc}
{\mathrel{\raise2pt\hbox{${\mathop<\limits_{\raise1pt\hbox
{\mbox{$\sim$}}}}$}}}

\newcommand{\gc}
{\mathrel{\raise2pt\hbox{${\mathop>\limits_{\raise1pt\hbox{\mbox{$\sim$}}}}$}}}

\newcommand{\ec}
{\mathrel{\raise2pt\hbox{${\mathop=\limits_{\raise1pt\hbox{\mbox{$\sim$}}}}$}}}

\def\bb{\begin{equation}} \def\ee{\end{equation}}
\def\bbn{\begin{equation*}} \def\een{\end{equation*}}

\def\beqn{\begin{eqnarray}}  \def\eqn{\end{eqnarray}}

\def\beqnx{\begin{eqnarray*}} \def\eqnx{\end{eqnarray*}}

\def\bn{\begin{enumerate}} \def\en{\end{enumerate}}

\def\bd{\begin{description}} \def\ed{\end{description}}

\makeatletter

\makeatother

\begin{document}

\title{
Asymptotic Randomised Control with applications to bandits}

\author{
	Samuel N. Cohen\footnote{Mathematical Institute, University of Oxford and Alan Turing Institute,   \texttt{samuel.cohen@maths.ox.ac.uk}}
	\and
	Tanut Treetanthiploet\footnote{Alan Turing Institute,   \texttt{ttreetanthiploet@turing.ac.uk}}
}
\date{\today}

\maketitle
\begin{abstract}
We consider a general multi-armed bandit problem with correlated (and simple contextual and restless) elements, as a relaxed control problem. By introducing an entropy regularisation, we obtain a smooth asymptotic approximation to the value function. This yields a novel semi-index approximation of the optimal decision process. This semi-index can be interpreted as explicitly balancing an exploration--exploitation trade-off as in the optimistic (UCB) principle where the learning premium explicitly describes asymmetry of information available in the environment and non-linearity in the reward function. Performance of the resulting Asymptotic Randomised Control (ARC) algorithm compares favourably well with other approaches to correlated multi-armed bandits.

\smallskip

Keywords: multi-armed bandit, Bayesian bandit, exploration--exploitation trade-off, Markov decision process, entropy regularisation, asymptotic approximation

MSC2020: 60J20, 93E35, 90C40, 41A58
\end{abstract}


 
\section{Introduction}
In many situations, one needs to decide between acting to reveal data about a system and acting to generate profit; this is the trade-off between exploration and exploitation. A simple situation where we face this trade-off is a multi-armed bandit problem, where one need to maximise the cumulative rewards obtained sequentially from playing a bandit with $K$ arm. The reward of an arm is generated from a fixed unknown distribution, which must be inferred 
`on-the-fly'. 


More recently,  multi-armed bandit problems are often formulated as a statistical problem (see e.g. \cite{adaptive_learning_survey,   Lattimore_book,  Tsitsiklis_correlated_bandit, GLM_bandit}), rather than an optimisation problem because of the computational cost.  Many novel algorithms for bandit problems are often proposed using heuristic justifications and are then shown to give theoretical guarantees in terms of a regret bound. Even though these constructive designed algorithms works well in some settings, it still fails to address a few issues which may appear in learning. This includes asymmetry of information, incomplete learning and effect of the horizon (see Section \ref{sec: limit}). 
	
This paper aims to address these issues by considering the multi-armed bandit problem as an optimisation problem for a Markov Decision Process (MDP) over an infinite time horizon, as originally proposed by Gittins and Jones \cite{Gittin_origin}. The underlying state of the MDP corresponds to the posterior distribution which is an infinite state space and takes value in high dimension. Solving such an MDP is generally computationally infeasible. To overcome the computational issues, we modify the entropy regularised control formulation of Reisinger and Zhang \cite{John_relaxed_control} and Wang, Zariphopoulou and Zhou \cite{Relaxed_control} to study discrete time bandit setting and obtain an asymptotic approximation to the value function when the posterior variance is relatively small. This approximation results in a randomised index strategy which enjoys a natural interpretation as a sum of the instantaneous reward (exploiting) and the learning premium (exploring).

The main contribution of this work is to develop conceptual insight into a general class of learning (bandit) problems from first principles.  We aim to understand how to make decisions when information from each arm interacts with the others in asymmetric manner (i.e. when there is an arm which could be more informative than others), and also aim to understand the connection between reward,  observation and learning mechanism when rewards are non-linear in the unknown parameter. Our result inspires the design of a learning algorithm which performs well numerically with cheap computational costs. In this work, we do not aim to provide a regret analysis of the derived algorithm.

The paper proceeds as follows.  In Section \ref{sec: Idea ARC}, we describe how we  formulate various classes of bandit problems in terms of a discrete-time diffusion process.  By considering the diffusion dynamic in a regime with small uncertainty (e.g. small posterior variance), we propose the Asymptotic Randomised Control (ARC) algorithm together with a sketch derivation and summary of our main results. 
In Section \ref{sec: comparison},  we give an overview of well-known approaches for bandit problems and discuss some limitations of these algorithms under a specific setting, and how the ARC algorithm addresses these limitations.  We also discuss the connection between the resulting ARC algorithm and other approaches for bandit problems. In Section \ref{sec: Numerical},  we run a numerical experiment  to show that when the uncertainty is small, the ARC algorithm is numerically accurate, and draw comparisons with other approaches for bandit problems in different settings. Finally, in Section \ref{sec: Prove ARC}, we provide a formal derivation of the ARC approach where the further technical results are given in the appendix.  

\paragraph{Notation:} 
For any function $h: \sR^p \times \sR^q \to \sR^n$, we write $\del^k_m \del^l_d h$ for a tensor of degree $(1,k,l)$ with dimension $np^kq^l$  corresponding to the partial derivatives of $h$.  We write
    $\langle \cdot ; \cdot \rangle $ for Euclidean tensor products. 
    In particular,  $\langle P ; Q \rangle $ is a standard inner product when $P,Q$ are vectors and  $\langle P ; Q \rangle = \Tr(P^\top Q)$ when $P$ and $Q$ are square matrices. We also write $| \cdot |$ for the corresponding Euclidean norm. For any (Borel) random variables $X,Y$ and a $\sigma$-algebra $\cG$,  we write $\cL(Y|\cG)$ and  $\cL(Y|X, \cG)$ for the conditional law of $Y$ given $\cG$ and $\sigma(X) \vee \cG$, respectively.  We write $[K] := \{1, 2, ..., K\}$, $e_i$ for the $i$-th standard basis vector with an appropriate dimension,  $\bm{1}_n = (1, 1, ...,1) \in \sR^n$, and $I_n$ for the identity matrix with dimension $n$. Throughout the proof, we shall introduce a generic constant $C \geq 0$ to quantify an upper bound.  This constant does not depend on variables $(m,d, \lambda, t, T)$ introduced throughout the paper. 
	


\section{Asymptotic Randomised Control (ARC) approach}
\label{sec: Idea ARC}
This section gives an overview of the Asymptotic Randomised Control (ARC) method for a general class of learning problem with a sketch derivation. We defer a rigorous mathematical justification to Section \ref{sec: Prove ARC}. 

We first introduce a heuristic description of our learning (bandit) problem from Bayesian perspective.  Let $\bs{\theta} \sim \pi$ be an underlying (unknown) parameter with prior $\pi$ describing the rewards of our bandit system. When the $i$-th arm, of the $K$ arms, is chosen at time $t$, we observe a random variable $Y^{(i)}_t \sim \pi_i(\cdot | \bs{\theta})$ where is independent for each $t$ conditional on $\bs{\theta}$ and obtain a reward $r_i(Y^{(i)}_t)$\footnote{This particular structure allows our agents to observe more than just a reward.}. The objective of our agent is to find a sequence of decisions $(A_t)$ taking values in $[K]$ based on available information to maximise 
\begin{equation}
\label{eq: main objective}
 \sE_{\pi} \Big[\sum_{t=1}^\infty \beta^{t-1} r_{A_t} \big( Y^{(A_t)}_t \big) \Big] = \sE_{\pi} \bigg[ \sum_{t=1}^\infty \beta^{t-1}  \bigg(\sum_{i=1}^K \bm{1}(A_t = i) \sE_{\pi} \Big[ \sE_{\pi} \big[ r_{i} \big( Y^{(i)}_t \big) \big| \bs{\theta} \big] \Big| \cG^A_{t-1} \Big] \bigg) \bigg]
\end{equation}
where $\beta \in (0,1)$ and $\cG^A_t := \sigma(A_1, Y^{(A_1)}_1, ..., A_t, Y^{(A_t)}_t)$ is the $\sigma$-algebra corresponding to our observation up to time $t$.

We see from \eqref{eq: main objective} that $\sE_{\pi} \big[ \sE_{\pi} \big[ r_{i} \big( Y^{(i)}_t \big) \big| \bs{\theta} \big] \big| \cG^A_{t-1} \big]$ depends only on the posterior distribution of $\bs{\theta}$ at time $t-1$. Therefore, we can formulate the maximisation of \eqref{eq: main objective} as a Markov Decision Process (MDP) with a posterior distribution as an underlying state.  

\subsection{Bandit structures}
\label{sec: Example}
We first give a few examples of bandit structures where we can describe posterior distributions with a finite dimensional state process (taking values in an infinite state space).
\subsubsection{Gaussian distribution for correlated structure}
Suppose the prior $\pi$ for $\bs{\theta}$ is a multivariate normal $N(m, d)$ with dimension $p$ and the observation from the $i$-th bandit at time $t$ is given by a $q$-dimensional random vector $Y^{(i)}_t$ with  $\cL(Y^{(i)}_t| \bs{\theta}) \sim N \big(c_i^\top \bs{\theta},  P_i^{-1} \big)$  where $c_i \in \sR^{p \times q}$ and $P_i$ is a $q \times q$ diagonal matrix with non-negative entries.  
We formally allow the $(j,j)$-th entry of $P_i$ to be $0$, if this is the case, we shall interpret that the $j$-th component of $Y^{(i)}_t$ is not observed.

For simplicity, we will demonstrate the calculation when all entries of $P_i$ are  positive.  The case with zero entries can be achieved either by simply taking the limit of the derived result or by considering each component separately.

Let $\cG^A_t$ denote the filtration describing our observation up to time $t$.  Suppose that our posterior at time $t$ is given by $\cL(\bs{\theta}| \mathcal{G}^A_t) = N\big(M_t, D_t\big)$.  By standard Bayesian inference,  we can show that  $$\cL(Y^{(i)}_{t+1} |  \mathcal{G}^A_t) = N\big(c_i^\top M_t,  c_i^\top D_t c_i + P_i^{-1} \big) \q \text{and} \q  \cL(\bs{\theta}|\mathcal{G}^A_{t+1}) = \cL(\bs{\theta}|Y^{(i)}_{t+1},  \mathcal{G}^A_t) = N\big(M_{t+1}, D_{t+1}\big)$$ where
$
M_{t+1} = \big(D_t^{-1} + c_iP_ic_i^\top\big)^{-1}\big(D_t^{-1} M_t + c_iP_iY^{(i)}_{t+1}\big) \q \text{and} \q
D_{t+1} = \big(D_t^{-1} + c_iP_ic_i^\top\big)^{-1}.
$

By the Woodbury identity, $(D_t^{-1} + c_i P_i c_i^T)^{-1} = D_t - D_tc_i(P_i^{-1} + c_i^\top D_tC_i)^{-1}c_i^\top D_t$, and hence
\begin{equation}
\label{eq: Gaussian mean propagation}
\left. \begin{aligned}
M_{t+1} - M_t&= \Big(D_t - D_t c_i(P_i^{-1} + c_i^\top D_t c_i)^{-1}c_i^\top D_t\Big) c_i P_i \Big( c_i^\top D_t c_i + P_i^{-1}\Big)^{1/2} Z^{(i)}_{t+1}, \\
D_{t+1} - D_t&= - D_t c_i(P_i^{-1} + c_i^\top D_t c_i)^{-1}c_i^\top D_t,
\end{aligned}
\right\}
\end{equation}
where $Z^{(i)}_{t+1} = ( c_i^\top D_t c_i + P_i^{-1})^{-1/2}(Y^{(i)}_{t+1} - c_i^\top M_t) \sim N(0, I_q)$.

Therefore, we can write the objective \eqref{eq: main objective} as an MDP with the objective
\begin{equation}
\label{eq: main objective Gaussian}
\sE_{m,d} \Big[\sum_{t=0}^\infty \beta^{t} f_{A _{t+1}}(M_t,  D_t)\Big] \q \text{with} \q f_i(m,d) = \int_{\mathbb{R}^q} r_i\big(c_i^\top m + (b^\top_id c_i + P^{-1}_i)^{1/2}z\big)\varphi(z)dz
\end{equation}
where $\varphi$ is the density of $N(0, I_q)$. The corresponding underlying state $(M_t,D_t)$ satisfying \eqref{eq: Gaussian mean propagation} is a discrete version of a diffusion process where the prior distribution is encoded as an initial state.

It is worth pointing out that this set-up covers various classes of multi-armed bandit problem which can be found in the literature:
\begin{Example}[Classical bandit]
\label{Ex: Gaussian classic bandit}
The case when $q = 1$,  $K = p$, $c_i =e_i$ and  $r_i(y) = y$ corresponds to the classical stochastic bandit which often finds in most of the literature (see e.g. \cite{Lattimore_book,  adaptive_learning_survey}).
\end{Example}
\begin{Example}[Linear bandit]
The case when $q=1$ and $r_i(y) = y$ corresponds to the linear bandit considered in Russo and Roy \cite{Russo_Roy_Thompson, IDS}. The case when $r_i$ is a non-linear function is considered in Filippi et al. \cite{GLM_bandit} with a different distribution assumption. 
\end{Example}
\begin{Example}[Structural bandit] 
The case when $p = q = 1$,  $c_i = 1$ and  $r_i(y) = u_i y + v_i$ corresponds to the structural bandit studied in Rusmevichientong et al.  \cite{Tsitsiklis_correlated_bandit}.
\end{Example}
\begin{Example}[Classical bandit with additional information]
\label{Ex: bandit adding info}
Suppose $q = p$,  $K = p$, $c_i = I_q$, $r_i(y) = \tilde{r}_i(y_i)$ and the $(i,i)$-th entry of $P_i$ is positive for all $i = 1, ..., K$.   In this case, the reward of the $i$-th arm is only associated with the parameter $\bs{\theta}_i$ (as $r_i(y)$ only depends on the $i$-th component of $y$). However,  when the $i$-th arm is chosen, we also observe some information associated to $\bs{\theta}_j$, provided that the $(j,j)$-th entry of $P_i$ is positive. We will use this example to demonstrate learning with asymmetric information in Section \ref{sec: address limit}.
\end{Example}

\begin{Example}[Semi-bandit feedback]
The case when $r_i(y) = \frac{1}{|\cJ_i|} \sum_{j \in [K]_i} y_j$ where $\cJ_i := \{ j : (P_i)_{jj} > 0 \} $ corresponds to the semi-bandit feedback considered in Russo and Roy \cite{Russo_Roy_Thompson, IDS}.
\end{Example}
\begin{Example}[Non-diagonal covariance structure]
\label{Ex: general linear}
Suppose that, when the $i$-th arm is chosen at time $t$,  we observe $\tilde{Y}^{(i)}_t|\bs{\theta} \sim N\big(\tilde{c}_i^\top \bs{\theta},  \tilde{P}_i^{-1} \big)$ with a corresponding reward function $\tilde{r}_i$, where  $ \tilde{P}_i$ is a positive-definite matrix.   By eigen-decomposition, we can write $\tilde{P}_i = Q_i P_i Q_i^\top$ where $Q_i$ is an orthogonal matrix and $P_i$ is a diagonal matrix with positive entries.  Define  $c_i = \tilde{c}_iQ_i$, ${Y}^{(i)}_t = Q_i^\top \tilde{Y}^{(i)}_t $ and $r_i(y) = \tilde{r}_i(Q_iy)$.  Since $Q_i$ is invertible,  observing $\tilde{Y}^{(i)}_t $ is equivalent to observing ${Y}^{(i)}_t $ and $r_i(Y^{(i)}_t) = \tilde{r}_i(\tilde{Y}^{(i)}_t)$. Thus, we recover the prescribed framework.
\end{Example}
\subsubsection{General distribution for independent structure}
Suppose that $\bs{\theta} = (\bs{\theta}_1, ..., \bs{\theta}_q)$ has a prior $\bigotimes_{j=1}^q \pi_j$ and when the $i$-th arm is chosen at time $t$, we observe
$Y^{(i)}_t := \big(Y^{(i,1)}_t, ..., Y^{(i,q)}_t \big) \sim \bigotimes_{j=1}^q \pi_i(\cdot | \bs{\theta}_j).$ Due to independence in the prior and observation,  the posterior of $\bs{\theta}$ also maintains independence of its components. Therefore, we can evaluate the posterior update of each $\bs{\theta}_j$ separately.  

For simplicity of discussion, we consider examples when $\bs{\theta}_j$ is one-dimensional.  The argument for more general cases can be easily extended to as long as $\pi_j$ and  $\pi_i(\cdot | \bs{\theta}_j)$ are a conjugate pair for all $i$ and $j$.  We implicitly allow $\pi_i(\cdot | \bs{\theta}_j)$ to be degenerate,  corresponding to the case when no information regarding  $\bs{\theta}_j$ is revealed when the $i$-th arm is chosen. 

In the following examples, we formulate the multi-armed bandit problem in terms of an MDP with an underlying discrete diffusion dynamic $(M,D)$, as  in \eqref{eq: Gaussian mean propagation}. The process $M$ is interpreted as an estimator of $\bs{\theta}$, while $D$ is the inverse precision of the estimate $M$ evolving in a deterministic manner. Here, we will only illustrate the corresponding dynamics of the underlying state $(M,D)$.  The objective function can then be written in the same manner as in \eqref{eq: main objective Gaussian}.
\begin{Example}[Binomial bandit] Suppose that the prior $\pi_j$ of $\bs{\theta}_j$ is $\text{Beta}(\alpha_j,  \beta_j)$ and  $\pi_i(\cdot | \bs{\theta}_j) = \text{Binomial}(n_i,  \bs{\theta}_j)$.  We again denote by $\cG^A_t$ our observations up to time $t$ and assume that $\cL(\bs{\theta}_j | \cG^A_t) = \text{Beta} \big( M^j_t/D_{j,t},  (1-M^j_t)/D_{j,t} \big)$.  When the $i$-th arm is chosen at time $t+1$, one can check that the posterior becomes $\cL(\bs{\theta}_j | \cG^A_{t+1}) = \text{Beta} \big( M_{j,t+1}/D_{j,t+1},  (1-M_{j,t+1})/D_{j,t+1} \big)$ with
{\small
\begin{equation}
\label{eq: Binomial dynamic}
\begin{aligned}
M_{j,t+1} - M_{j,t} = \bigg(\frac{D_{j,t}}{1 +n_i D_{j,t}}\bigg){\bigg(\frac{n_iM_{j,t}(1-M_{j,t})}{1+D_{j,t}} \bigg) }^{1/2} Z^{(i,j)}_{t+1} \q \text{and} \q
D_{j,t+1} - D_{j,t} = - \frac{n_i D_{j,t}^2}{1 + n_i D_{j,t}},
\end{aligned}
\end{equation}
}
where $Z^{(i,j)}_{t+1} := (Y^{(i,j)}_{t+1} - n_iM_{j,t})\Big(\frac{n_iM_{j,t}(1-M_{j,t})}{1+D_{j,t}} \Big)^{-1/2}$ satisfies $\sE Z^{(i,j)}_{t+1}  = 0, \text{Var}[Z^{(i,j)}_{t+1} | M_t, D_t] = 1$.  
\end{Example}
\begin{Example}[Poisson bandit]
Suppose that the prior $\pi_j$ of $\bs{\theta}_j$ is $\text{Gamma}(\alpha_j,  \beta_j)$ and  $\pi_i(\cdot | \bs{\theta}_j) = \text{Poisson}(n_i\bs{\theta}_j)$.  Assume that $\cL(\bs{\theta}_j | \cG^A_t) = \text{Gamma} \big( M_{j,t}/D_{j,t}, 1/D_{j,t} \big)$.  When the $i$-th arm is chosen at time $t+1$,  the posterior becomes $\cL(\bs{\theta}_j | \cG^A_{t+1}) = \text{Gamma} \big( M_{j,t+1}/D_{j,t+1}, 1/D_{j,t+1} \big)$ with

\begin{equation}
\label{eq: Poisson dynamic}
\begin{aligned}
M_{j,t+1} - M_{j,t} = \bigg(\frac{D_{j,t}}{1 +n_i D_{j,t}}\bigg)({n_i M_{j,t} })^{1/2} Z^{(i,j)}_{t+1} \q \text{and} \q
D_{j,t+1} - D_{j,t} = - \frac{n_i D_{j,t}^2}{1 + n_i D_{j,t}},
\end{aligned}
\end{equation}
where $Z^{(i,j)}_{t+1} := (Y^{(i,j)}_{t+1} - M_{j,t})(n_i M_{j,t} )^{-1/2}$ satisfies $\sE Z^{(i,j)}_{t+1}  = 0, \text{Var}[Z^{(i,j)}_{t+1} | M_t, D_t] = 1$.  
\end{Example}
\subsection{From multi-armed bandit to diffusive control problem}
\label{sec: setup}
Inspired by the dynamics \eqref{eq: Gaussian mean propagation}, \eqref{eq: Binomial dynamic} and \eqref{eq: Poisson dynamic},   we introduce a general framework of a discrete time diffusion control problem which can be used to studied the multi-armed bandit.  

Let $(\Omega, \mathbb{P}, \mathcal{F})$ be a probability space equipped with two independent IID sequences of $U[0,1]$ random variables $(\bs{\xi}_t)_{t \in \sN}$ and $(\zeta_t)_{t \in \sN}$. We define the filtration $\mathcal{F}_t := \sigma(\bs{\xi}_s, \zeta_s : s \leq t)$. Here, $(\zeta_t)$ represents a random seed used to select a random decision in each time step, whereas  $(\bs{\xi}_t)$ represents the randomness of the outcome. 

At time $t$, our agent chooses an action $A_t$ taking values in $[K]$ produced by randomly selecting from a probability simplex $U_t$ where $(U_t)_{t \in \sN}$ is $(\cF_t)$-predictable. 
Without loss of generality, we assume that $A_t = \cA(U_t, \zeta_t)$ where $
    \cA(u, \zeta) = \sup\big\{i : \sum_{k=1}^i u_k \geq \zeta \big\},
$

Let $\Psi$ be a collection of measurable functions $\big\{ \varphi : \sN \times \Theta \times \cD \to \Delta^K \big\}$ describing our feedback actions. For each $\varphi \in \Psi$, we define the corresponding posterior dynamics by $(M^{\varphi, m, d}, D^{\varphi, m, d})$ taking values in $\Theta \times \cD \subseteq \sR^p \times \sR^q$ by $(M^{\varphi, m, d}_0, D^{\varphi, m, d}_0) = (m,d)$ and
\begin{align}
    \label{eq: feedback posterior dynamic}
    (M^{\varphi, m, d}_{t}, D^{\varphi, m, d}_{t}) =  \Phi\Big(M^{\varphi, m, d}_{t-1}, D^{\varphi, m, d}_{t-1},\cA\big(\varphi(t, M^{\varphi, m, d}_{t-1}, D^{\varphi, m, d}_{t-1}), \zeta_{t} \big) , \bs{\xi}_{t} \Big).
\end{align}
where a measurable function
$\Phi : \Theta \times \cD \times [K] \times [0,1] \to \Theta \times \cD$ is given in a form
\begin{equation}
\label{eq: transition map}
\Phi(m,d, i, \xi) := 
\begin{pmatrix}
m \\ d
\end{pmatrix} +
\begin{pmatrix}
\mu_i(m,d) 
 \\ b_i(m,d) 
\end{pmatrix} +
\begin{pmatrix}
\sigma_i(m,d) z(m,d,i, \xi)\\0
\end{pmatrix} 
\end{equation}
with $\mu_i :  \Theta \times \cD \to \sR^p$,  $b_i:  \Theta \times \cD  \to \sR^q $, $\sigma_i :  \Theta \times \cD  \to \sR^{p \times r}$,  $ z :  \Theta \times \cD \times [K]  \times [0,1] \to \sR^r$. 

We also write $U^{\varphi, m, d}_{t} := \varphi(t, M^{\varphi, m, d}_{t-1}, D^{\varphi, m, d}_{t-1})$ and $A^{\varphi, m, d}_{t} := \cA(U^{\varphi, m, d}_{t}, \zeta_t)$. For notational simplicity, when clear from context, we will indicate $(m,d)$ as a subscript in the probability measure  and omit the superscript $(m,d)$ on $(M,D)$.

As illustrated in \eqref{eq: main objective} and elaborated in \eqref{eq: main objective Gaussian},  the objective of our control problem is to solve
\begin{equation}
\label{Eq: no entropy reward}
\left.
\begin{aligned}
 V(m,d) &:=
 \sup_{\varphi \in \Psi}V^\varphi(m,d), \qquad \text{where} \\
V^\varphi(m,d) &:= \mathbb{E}_{m,d}\Big[\sum_{t=0}^{\infty}\beta^{t} f_{A^{\varphi}_{t+1}}\big(M^{\varphi}_t, D^{\varphi}_t\big)\Big] =\mathbb{E}_{m,d}\Big[\sum_{t=0}^{\infty}\beta^{t} \Big(\sum_{i=1}^K f_{i}\big(M^{\varphi}_t, D^{\varphi}_t \big) U^\varphi_{i,t+1}\Big)\Big], 
\end{aligned}
\right\}
\end{equation}
 where $f:  \Theta \times \cD  \to \sR^K$. We assume that $f$ satisfies the following assumption.

\begin{Assumption}
\label{Assump: Instantaneous cost}
$f$ is $3$-times differentiable, and there exists a constant $C \geq 0$ such that for any $k,l \in \sN \cup \{0\}$ with $k+l \in \{1,2,3\}$, $|\partial_m^k \partial_d^l f|  \leq C. $
\end{Assumption}


Inspired by \eqref{eq: Gaussian mean propagation}, \eqref{eq: Binomial dynamic} and \eqref{eq: Poisson dynamic},  we also make the following assumption on the state dynamics
\begin{Assumption} 
\label{Assump: Dynamic Asumption} $\cD $ is a compact set and
there exists a constant $C > 0$ and a norm $\|\cdot \|$ on $\cD$  such that
\begin{enumerate}[(i)]
\item For any $i \in [K]$ and $(m,d) \in  \Theta \times \cD$,  $\|d +   b_i(m,d)\| \leq  \|d\|$. 
\item For any $i \in [K]$, $\xi \in [0,1]$ and $(m,d) \in  \Theta \times \cD$,  $m + \mu_i(m,d) + \sigma_i(m,d) z(m,d, i, \xi) \in  \Theta$. 
\item For any $i \in [K]$, $(m,d) \in  \Theta \times \cD$ and $t \in \sN$, $\sE[z(m,d, i, \bs{\xi}_t)] =0$, $\text{Var}[z(m,d,i, \bs{\xi}_t)] =I_r$.
\item For any $i \in [K]$,  $(m,d) \in  \Theta \times \cD$ and $t \in \sN$,  $\sE\big[ \big|z(m,d, i,\bs{\xi}_t) \big|^3 \big] \leq C.$
\item For any $\psi \in \big\{b_i, \mu_i, \big(\sigma_i\sigma_i^\top\big)  :i \in \mathcal{A}\big\}$, and $(m,d) \in  \Theta \times \cD$, we have
$$\sup_{m \in  \Theta} |\psi(m,d)|\leq C\|d\|^2,  \quad \sup_{m \in  \Theta}|\partial_m \psi(m,d)| \leq C\|d\|^2, \q \text{and} \q \sup_{m \in  \Theta}|\partial_d \psi(m,d)| \leq C\|d\|.$$
\end{enumerate}
\end{Assumption}
Conceptually, (H.\ref{Assump: Dynamic Asumption})$(i)$ says that our precision (our knowledge) of the parameter always improves with more observation. (H.\ref{Assump: Dynamic Asumption})$(ii)$ says that the updated parameter estimate always lies in our parameter set $\Theta$. (H.\ref{Assump: Dynamic Asumption})$(iii)-(iv)$ are the structural assumptions allowing to express our results in terms of $\mu_i$ and $\sigma_i$. Finally, (H.\ref{Assump: Dynamic Asumption})$(v)$ appears naturally through the propagation of the information as discussed in \eqref{eq: Gaussian mean propagation}, \eqref{eq: Binomial dynamic} and \eqref{eq: Poisson dynamic}. (H.\ref{Assump: Dynamic Asumption})$(v)$ is the key assumption for the asymptotic analysis that will be considered in the later section.

\begin{Remark}
It is worth emphasising that the above set-up covers examples discussed in Section \ref{sec: Example}. In particular, if the expected reward $\sE[r_i(Y^{(i)}_t)|\bs{\theta}]$ is linear in $\bs{\theta}$ and $M_t := \sE_\pi[\bs{\theta}|\cF^A_t]$, then the corresponding function $f$ is linear in $m$ and does not depend on $d$; consequently $(H.\ref{Assump: Instantaneous cost})$ becomes trivial. Furthermore, we also see that the dynamics \eqref{eq: Gaussian mean propagation}, \eqref{eq: Binomial dynamic} and \eqref{eq: Poisson dynamic}  can be written in the form \eqref{eq: transition map} and satisfy (H.\ref{Assump: Dynamic Asumption})$(i)$-(H.\ref{Assump: Dynamic Asumption})$(iv)$, with appropriate norms and with $ \Theta = \sR^p$,  $ \Theta = [0,1]^p$ and $ \Theta = [0,\infty)^p$, respectively. 
Moreover, we can see that \eqref{eq: Gaussian mean propagation} and \eqref{eq: Binomial dynamic} also satisfy (H.\ref{Assump: Dynamic Asumption})$(v)$. Unfortunately,  $\eqref{eq: Poisson dynamic}$ does not satisfy (H.\ref{Assump: Dynamic Asumption})$(v)$ since $\sigma_i(m,d)$ is unbounded. We may overcome this issue by replacing $\sigma_i$ with its smooth truncation to ensures that (H.\ref{Assump: Dynamic Asumption})$(v)$ holds and consider this as an approximation to the original Poisson bandit problem.  
\end{Remark}

\begin{Remark}
We may modify (H.\ref{Assump: Dynamic Asumption}) to consider an ergodic diffusion with small perturbation which is closely related to the Kalman-filtering theory.  We state the corresponding assumption here for precision of our discussion.  Nonetheless, we will focus on (H.\ref{Assump: Dynamic Asumption}) for clarity, and  discuss how to extend our analysis to this framework in Remark \ref{rem:Kalman} and Theorem \ref{Thm:Kalman}.
\end{Remark}
\begin{Assumption} 
\label{Assump: Kalman}
(H.\ref{Assump: Dynamic Asumption}) holds with $(i)$ replaced by
\begin{enumerate}
\item[$(i')$]  For any $i \in [K]$ and $(m,d) \in  \Theta \times \cD$,  $\|d +   b_i(m,d)\| \in \cD$.
\end{enumerate}
\end{Assumption}


\subsection{Learning premia and index strategies (Inspiration of the ARC)}
\label{sec:learning premium}
Solving \eqref{Eq: no entropy reward} requires us to solve the Bellman equation with $(M^\varphi, D^\varphi)$ as an underlying state which is generally computationally intractable. Thanks to the nature of learning problems, we see from (H.\ref{Assump: Dynamic Asumption}) that the state $(M^\varphi, D^\varphi)$ (which corresponds to the posterior parameters) will not change much when $\|D^\varphi\|$ is small.  
In this section, we explore the intuition behind a few learning approaches, which inspire us to study the second order asymptotic expansion of \eqref{Eq: no entropy reward}  over small $\|d\|$ and obtain the ARC algorithm.

Consider the classical Gaussian bandit as discussed in Example \ref{Ex: Gaussian classic bandit}, which corresponds to the case $f_i(m,d) =m_i$,  $\Theta = \sR^K$, and $\cD$ is a family of diagonal matrices with positive entries. In this setting, Gittins \cite{Gittins_book} shows that the optimal solution to \eqref{Eq: no entropy reward} is given in terms of an index strategy where the agent always chooses an arm to maximise an index $\alpha : \Theta \times \cD \to \sR^K$ where $\alpha_i(m,d) = m_i + \ell(\beta, d_{ii})$ with $\ell(\beta, d_{ii})$ increases in $d_{ii}$ (see also \cite{Brezzi_and_Lai_approx, Note_Gittins_UCB} for an approximation). In short, we see that the index $\alpha$ is a sum of two terms:
\begin{equation}
\label{eq: index decomposition}
\alpha = \text{Exploitation gain} + \text{Learning premium}.
\end{equation}
where the exploitation gain describes the expected reward that the agent will obtain given the current estimate whereas the learning premium describes the benefit for learning. 

 This observation inspires an \textit{optimistic principle} to design learning algorithms using an upper-confidence bound (UCB) (see e.g. \cite{ UCB_tuned, Bayes_UCB, GLM_bandit}).  Roughly speaking, the UCB approach chooses an arm based on an index of the form \eqref{eq: index decomposition} where the learning premium scales with uncertainty of the estimate reward. Since we add uncertainty as an additional reward, one could interpret this as a claim that the agent should have a preference for uncertain choices (in order to encourage learning). This is a misleading conclusion as the following example shows.
\begin{Example}[Uncertainty Preference] 
\label{ex: uncertain} Let consider a bandit with two arms.  Suppose that the reward of the first arm is sampled from $N(\bs{\theta}, 1)$ where $\bs{\theta}$ is not known, while the reward of the second arm is fixed and always $1$.  Suppose that we only collect the reward of the arm that we choose, but we always observe the reward of the first arm. Hence, we do not have to play the first arm to learn $\bs{\theta}$. Thus, most decision makers (without taking any risk/ uncertainty aversion) will choose arms purely based on the estimate $m$ of $\bs{\theta}$. In particular,  they will choose the first arm if the estimate  $m>1$ and choose the second arm otherwise.  
\end{Example}

In the above example,  we see that the reward of the first arm is more uncertain than the second arm, but a preference for uncertainty does not benefit our decision\footnote{In fact, in many behavioural models,  people display a bias (pessimism) against risk and uncertainty (see e.g. \cite{DR_original, Keynes, Knight} for general settings and \cite{Robust_Gittins_Our_paper} for the classical bandit setting). In practice, many decision makers still prefer the second arm in Example \ref{ex: uncertain} even if $m > 1$ to avoid risky and uncertain outcomes.}.  A better interpretation of the optimistic principle is that we have a preference for information gain or the `reduction' in uncertainty (rather than the uncertainty itself).   In the case of classical bandit, the information gain corresponds to the uncertainty of the estimate which suggests the misleading conclusion that we prefer an uncertain choice. 

The above observation leads us to ask, `how we can quantify information gain?' Consider a simple greedy approach, where we always stick with the best policy from the estimate at time $0$, i.e. for a fix $m \in \Theta$, let $\varphi^{GD}(t,m',d') = e_k$  where $k = \argmax_{i} m_i$.  Since $\bs{\theta} \sim N(m,d)$,
\begin{equation}
\label{eq: Greedy lost}
 V(m,d) - V^{\varphi^{GD}}(m,d) \leq \sum_{t=0}^\infty \beta^t \sE\big(\max_{i} \bs{\theta}_i - \max_{i} m_i )\leq  (1-\beta)^{-1}\sqrt{\|d\|\log K  },
\end{equation}
the first inequality follows from the fact that we cannot make a better decision than the best decision with known $\bs{\theta}$, and the second inequality follows from the Gaussian maximal inequality (see e.g. Chernozhukov et al. \cite[Theorem 1]{Gaussian_maximal}). 

The greedy strategy could be interpreted as a first order approximation to the optimal solution, considering only Exploitation gain in \eqref{eq: index decomposition}. This decision introduces error (relative to the best strategy) with order $\cO(\|d\|^{1/2})$.  This perspective suggests an index (like) strategy, where we choose the learning premium as a second order approximation (with respect to $d$).

Unfortunately,  as discussed in Reisinger and Zhang \cite{John_relaxed_control} in a continuous time setting, the solution to \eqref{Eq: no entropy reward} is, in general, non-smooth and thus difficult to obtain asymptotic approximation. To overcome this difficulty, we introduce an entropy regularisation (see also \cite{Relaxed_control, Gradient_flow_lukasz}) to analyse \eqref{Eq: no entropy reward} when $\|d\|$ is small. We later show that this approximation (and its corresponding strategy) introduces error with order $\cO(\|d\|)$, compared to $\cO(\|d\|^{1/2})$ for the naive greedy approach.
\subsection{From a regularised control problem to ARC algorithm}
\label{sec: core derivation ARC}
In this section, we will use an entropy regularised control to construct an approximation to the value function \eqref{Eq: no entropy reward} and sketch a derivation of the corresponding ARC algorithm to solve the diffusive control problem, introduced in Section \ref{sec: setup}.  The proof of the error of the approximation will be provided in Section \ref{sec: Prove ARC}.

We first observe that for $a \in \sR^K$, we can approximate the maximum function by
\begin{equation}
\label{eq: smax}
\max_i a_i \approx \smaxld(a) := \sup_{u \in \Delta^K} \bigg(\sum_{i=1}^K u_i a_i + \lambda\mathcal{H}(u)\bigg)
\end{equation}
where $\cH$ is a smooth (entropy) function and $\lambda > 0$ is a small regularisation parameter.  We also observe that a standard corollary to Fenchel's inequality (see e.g. Rockafellar \cite{Convex_Analysis_book}) yields
\begin{equation}
\label{eq: nu}
\nu^\lambda(a) := \partial_a  \smaxld(a) =   \argmax_{u \in \Delta^K}\bigg(\sum_{i=1}^K u_i a_i + \lambda\mathcal{H}(u)\bigg).
\end{equation}

\begin{Remark}[Shannon Entropy]
\label{Rem: Shannon Entropy}
For simplicity of discussion, we defer the formal assumptions on $\cH$ to Definition \ref{def: smooth entropy} in Section \ref{sec: Prove ARC}. The reader may simply suppose $\cH$ is the Shannon entropy, $\mathcal{H} (u):= -\sum_{i=1}^K u_i \ln u_i$. In this case, we have $\smaxld(a) =  \lambda \ln\big(\sum_{i=1}^K\exp\big(a_i/\lambda\big)\big)$ and  
$\nu^\lambda_i(a) = \exp\big(a_i/\lambda\big)\big/\big(\sum_{j=1}^K\exp\big(a_j/\lambda\big)\big)$.
\end{Remark}

\begin{Definition}
	Let $\mathcal{H}: \Delta^K \to \mathbb{R}$ be a smooth entropy (Definition \ref{def: smooth entropy})  and $\lambda > 0$. Define
	\begin{equation}
	\label{eq: infinite horizon value}
	\left.
	\begin{aligned}
	V^\lambda_\infty(m,d) &:= \sup_{\varphi \in \Psi} V^{\lambda,\varphi}_\infty(m,d), \qquad \text{where} \\
	V^{\lambda,\varphi}_\infty(m,d) &:= \mathbb{E}_{m,d}\Bigg[\sum_{t=0}^{\infty}\beta^{t} \bigg( \Big(\sum_{i=1}^K f_{i}\big(M^{\varphi}_t, D^{\varphi}_t \big) U^\varphi_{i, t+1}\Big) + \lambda \mathcal{H}(U^\varphi_{t+1}) \bigg)\Bigg].
\end{aligned}	
\right\}
	\end{equation}
\end{Definition}
As inspired by the discussion in Section \ref{sec:learning premium}, we would like to construct a function $\alpha^\lambda :  \Theta \times \cD  \to \sR^K$ where the $i$-th component of  $\alpha^\lambda$ corresponds to an `incremental reward' over a single step for choosing the $i$-th option.  In particular, let us assume that when $\|d\|$ is sufficiently small, the value function \eqref{eq: infinite horizon value} is approximated by the maximum amongst the values of the available options, i.e.,
\begin{equation}
\label{eq: approx V}
V^\lambda_\infty(m,d) \approx (1-\beta)^{-1}\big( \smaxld \circ \alpha^\lambda \big) (m,d).
\end{equation}

By the dynamic programming principle and the dynamic \eqref{eq: feedback posterior dynamic}-\eqref{eq: transition map}, we can rewrite \eqref{eq: infinite horizon value} as 
{\small
\begin{align}
V^\lambda_{\infty}(m,d) &= \sup_{u \in \Delta^K}\bigg(\sum_{i=1}^K\bigg(u_{i}\Big(f_i(m,d) + \beta \mathbb{E}[V^\lambda_{\infty}(\Phi(m,d, i, \bs{\xi}))]\Big) \bigg) + \lambda \mathcal{H}(u)\bigg) \\
& \approx \sup_{u \in \Delta^K}\bigg(\sum_{i=1}^K\bigg(u_{i}\Big(f_i(m,d) + \beta(1-\beta)^{-1} \mathbb{E}\Big[\big( \smaxld \circ \alpha^\lambda \big) \big(\Phi(m,d, i, \bs{\xi})\big)\Big]\Big) \bigg) + \lambda \mathcal{H}(u)\bigg) \label{eq: infinite horizon DPP}. 
\end{align}
}
Using the approximation \eqref{eq: approx V} on the LHS and applying \eqref{eq: smax}, we obtain from \eqref{eq: infinite horizon DPP} that
{\small
\begin{align*}
\big( \smaxld \circ \alpha^\lambda \big) (m,d)&\approx \smaxld  \bigg( f(m,d) + \beta(1-\beta)^{-1} \mathbb{E}\Big[\big( \smaxld \circ \alpha^\lambda \big) \big(\Phi(m,d,  \cdot, \bs{\xi})\big) - \big( \smaxld \circ \alpha^\lambda \big) (m,d) \bm{1}_K\Big]\bigg).
\end{align*}
}
 In particular, this suggests the approximation
\begin{equation}
\label{eq: approximate alpha}
\alpha^\lambda(m,d) \approx f(m,d) + \beta(1-\beta)^{-1} \mathbb{E}\Big[\big( \smaxld \circ \alpha^\lambda \big) \big(\Phi(m,d,  \cdot, \bs{\xi})\big) - \big( \smaxld \circ \alpha^\lambda \big) (m,d)\bm{1}_K \Big].
\end{equation}

Since our dynamics form a discrete diffusion process,  a discrete-time version of Ito’s lemma suggests that we can choose
\begin{equation}
\label{eq: alpha}
\alpha^\lambda(m,d) := f(m,d) + \beta(1-\beta)^{-1} L^\lambda ( m ,d ),
\end{equation}
where $L^\lambda :  \Theta \times \cD  \to \sR^K$  is given by
\begin{equation}
\label{eq: L expression}
\begin{aligned}
L^\lambda_i(m,d) &:= \langle \mathcal{B}^\lambda(m,d) ; b_i(m, d) \rangle + \langle \mathcal{M}^\lambda(m,d) ; \mu_i(m,d) \rangle +  \frac{1}{2}\langle \Sigma^\lambda(m,d) ; \sigma_i\sigma_i^\top(m,d) \rangle.
\end{aligned}
\end{equation}
Here, 	 $\mathcal{B}^{\lambda} : \Theta \times \cD  \to \sR^p$,  $\mathcal{M}^{\lambda} : \Theta \times \cD  \to \cD $ and $\Sigma^{\lambda} : \Theta \times \cD  \to \sR^{p \times p}$,
{\footnotesize
\begin{equation}
\label{eq: B, M, Sigma Expression}    
\left.
\begin{aligned}
	&\mathcal{B}^{\lambda}(m, d) := \sum_{j =1}^K \nu^{\lambda}_j\big((f(m,d) \big) \del_d f_j(m,d), \qq
	\mathcal{M}^{\lambda}(m, d) := \sum_{j =1}^K \nu^{\lambda}_j\big((f(m,d) \big) \del_m f_j(m,d),  \\
	&\Sigma^{\lambda}(m,d) :=\sum_{j =1}^K \Big(\nu^{\lambda}_j\big((f(m,d) \big)\del^2_m f_j(m,d)\Big) + \frac{1}{\lambda}\sum_{i,j =1}^K\Big(\eta^{\lambda}_{ij}\big((f(m,d) \big) \big(\del_m f_i(m,d) \big) \big(\del_m f_j(m,d) \big)^\top\Big). 
	\end{aligned}
	\right\}
\end{equation}	
}
where $\eta^\lambda(a) := \lambda \del_a^2S^\lambda_{\max}(a)$ (which is determined by $\eta_{ij}^\lambda(a) = \nu^\lambda_i(a)(\mathbb{I}(i=j) - \nu^\lambda_j(a) )$ for the case of Shannon Entropy given in Remark \ref{Rem: Shannon Entropy}).   

Moreover, by substituting \eqref{eq: approximate alpha} into \eqref{eq: infinite horizon DPP}, we can write 
\begin{align}
\label{eq: V regularise approximation}
V^\lambda_{\infty}(m,d) 
& \approx \sup_{u \in \Delta^K}\bigg(\sum_{i=1}^K u_{i} \alpha^\lambda_i(m,d)+ \lambda \mathcal{H}(u)\bigg) +\beta(1-\beta)^{-1}\big( \smaxld \circ \alpha^\lambda \big) (m,d),
\end{align}
with a small error for small $\|d\|$. This suggests an approximate solution ${u}^{*,\lambda}(m,d) \approx \nu^\lambda \big( \alpha^\lambda(m,d)\big)$ where the agent choose an arm randomly based on the softmax of the index $ \alpha^\lambda(m,d)$.

\subsection{Description of the main results}
In earlier section, we introduce an entropy regularisation and obtain an approximation of $V^\lambda_\infty$.  Unfortunately,  derivatives of the smooth approximation explode as $\lambda \to 0$.  This means that approximation error \eqref{eq: V regularise approximation} may explode if we take $\lambda \to 0$ while fixing $\|d\|$. Therefore, we will quantify errors in terms of $\|d\|$ and $\lambda^{-1}$.   

\begin{Theorem}[Error bound in the regularised control problem]
	\label{Thm: main bound for value function and approximation}
Suppose that (H.\ref{Assump: Instantaneous cost}) and (H.\ref{Assump: Dynamic Asumption})  holds.  There exists a constant $C \geq 0$ such that for any   $(m,d) \in \Theta \times \cD$ and $\lambda >0$
	\begin{align*}
&\big| V^\lambda_{\infty}(m,d)  -(1-\beta)^{-1}\big(\smaxld \circ \alpha^\lambda\big)(m,d) \big| \leq  C \|d\|^3\big(1 + \lambda^{-2} + \lambda^{-3} \|d\|  \big).
\end{align*}
and 
\begin{align*}
V^\lambda_{\infty}(m,d)   - V^{\lambda,\varphi^\lambda}_\infty(m,d) \leq C \|d\|^3\big(1 + \lambda^{-2} + \lambda^{-3} \|d\|  \big)
\end{align*}
where 
$V^\lambda_{\infty}$  is the optimal regularised value function and $ V^{\lambda, \varphi^{{\lambda}}}$ is the regularised value function corresponding to the feedback policy $\varphi(t, m,d) \mapsto \nu^\lambda \big( \alpha^\lambda(m,d)\big)$  (cf. \eqref{eq: infinite horizon value}).
\end{Theorem}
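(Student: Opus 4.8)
The plan is to prove Theorem \ref{Thm: main bound for value function and approximation} as a \emph{verification} result for the candidate $W^\lambda:=(1-\beta)^{-1}\big(\smaxld\circ\alpha^\lambda\big)$, by showing it almost solves the relevant Bellman equations and then propagating the one-step error. Write $g:=\smaxld\circ\alpha^\lambda$ and let $\cT^\lambda[W](m,d):=\smaxld\big(f(m,d)+\beta\,\mathbb{E}[W(\Phi(m,d,\cdot,\bs{\xi}))]\big)$ be the optimal Bellman operator, so that $V^\lambda_\infty=\cT^\lambda[V^\lambda_\infty]$ by the dynamic programming principle \eqref{eq: infinite horizon DPP}. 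I would use two elementary facts about $\smaxld$: translation invariance $\smaxld(a+c\bm{1}_K)=\smaxld(a)+c$, and $1$-Lipschitzness $|\smaxld(a)-\smaxld(b)|\le\max_i|a_i-b_i|$ (since $\del_a\smaxld=\nu^\lambda\in\Delta^K$). Applying the translation property to $\cT^\lambda[W^\lambda]$ and using $g=\smaxld(f+\beta(1-\beta)^{-1}L^\lambda)$ by \eqref{eq: alpha}, a short computation gives
\[
\big|W^\lambda-\cT^\lambda[W^\lambda]\big|\le \tfrac{\beta}{1-\beta}\max_{i\in[K]}\big|R_i\big|,\qquad R_i:=\mathbb{E}\big[g(\Phi(m,d,i,\bs{\xi}))\big]-g(m,d)-L^\lambda_i(m,d),
\]
so everything reduces to the one-step (discrete Itô) remainder $R_i$.

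The heart of the proof is the estimate $|R_i(m,d)|\le C\|d\|^3\big(1+\lambda^{-2}+\lambda^{-3}\|d\|\big)$. Here I would expand $\smaxld$ around $\alpha^\lambda(m,d)$ to third order, which is legitimate because $\smaxld$ is smooth with $|\del_a\smaxld|\le C$, $|\del_a^2\smaxld|\le C\lambda^{-1}$ and $|\del_a^3\smaxld|\le C\lambda^{-2}$, and feed in the increment $\delta\alpha:=\delta f+\beta(1-\beta)^{-1}\delta L$, where $\delta f,\delta L$ are the increments of $f$ and $L^\lambda$. Since $f$ is $C^3$ with bounded derivatives (H.\ref{Assump: Instantaneous cost}), $\delta f$ admits a genuine second-order Itô expansion; combining $\mathbb{E}[z]=0$, $\mathrm{Var}[z]=I_r$, $\mathbb{E}|z|^3\le C$ (H.\ref{Assump: Dynamic Asumption}(iii)--(iv)) with $|\mu_i|,|b_i|,|\sigma_i\sigma_i^\top|\le C\|d\|^2$ (H.\ref{Assump: Dynamic Asumption}(v)), the \emph{first}-order softmax term reproduces $\langle\mathcal{M}^\lambda;\mu_i\rangle+\langle\mathcal{B}^\lambda;b_i\rangle+\tfrac12\langle\sum_j\nu^\lambda_j(f)\del_m^2f_j;\sigma_i\sigma_i^\top\rangle$ while the \emph{second}-order softmax term (through $\mathbb{E}[\delta f\,\delta f^\top]$ and $\eta^\lambda=\lambda\del_a^2\smaxld$) reproduces exactly the missing $\tfrac1{2\lambda}\langle\sum_{jk}\eta^\lambda_{jk}(f)(\del_mf_j)(\del_mf_k)^\top;\sigma_i\sigma_i^\top\rangle$; by \eqref{eq: L expression}--\eqref{eq: B, M, Sigma Expression} these reassemble precisely into $L^\lambda_i$, which is why $L^\lambda$ is the leading discrete-Itô term.

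What remains is to bound the residual terms, and this bookkeeping is where I expect the bulk of the work and the main obstacle. The third-order softmax term contributes $O(\lambda^{-2})\,\mathbb{E}|\delta\alpha|^3=O(\lambda^{-2}\|d\|^3)$ (the $\lambda^{-2}$ scale); replacing $\nu^\lambda(\alpha^\lambda),\eta^\lambda(\alpha^\lambda)$ by $\nu^\lambda(f),\eta^\lambda(f)$ costs $O(\lambda^{-1}\|\alpha^\lambda-f\|)=O(\lambda^{-1}(1+\lambda^{-1})\|d\|^2)$ per factor, and inside the $\tfrac1{2\lambda}\eta^\lambda$-term this is what produces the $\lambda^{-3}\|d\|$ scale; the Itô cross-terms in $\mu_i\mu_i^\top,\mu_ib_i^\top,b_ib_i^\top$ together with the $C^3$-remainder of $f$ are $O(\|d\|^3)$ up to $\lambda^{-2}$ factors, giving the leading $1$ and $\lambda^{-2}$ scales. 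The genuinely delicate point is the correction increment $\delta L$: because H.\ref{Assump: Dynamic Asumption}(v) controls only the \emph{first} derivatives of $\mu_i,b_i,\sigma_i\sigma_i^\top$ (and $f$ is only $C^3$), the candidate $\alpha^\lambda$ is not smooth enough to Itô-expand $g$ directly to second order, so $\delta L$ must be handled only to first order via the mean value theorem, using $|\del_mL^\lambda|\le C\lambda^{-2}\|d\|^2$ and $|\del_dL^\lambda|\le C((1+\lambda^{-1})\|d\|+\lambda^{-2}\|d\|^2)$ (each obtained by differentiating \eqref{eq: L expression} once, which needs only $\del^3f$ and $\del(\sigma_i\sigma_i^\top)$). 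Keeping track of which contributions carry the stochastic factor $\sigma_iz$ (and hence gain a power of $\|d\|$ only after expectation), while avoiding amplification of $\delta L$ by the $\lambda^{-1}$ prefactor of $\del_a^2\smaxld$, is exactly the technical crux, and I expect it to require a careful organisation of terms rather than a single clean expansion.

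Finally, the global bound follows from a telescoping argument using H.\ref{Assump: Dynamic Asumption}(i). Setting $\delta:=W^\lambda-V^\lambda_\infty$ and using $1$-Lipschitzness of $\cT^\lambda$ yields $|\delta(m,d)|\le\beta\max_i\mathbb{E}\big[|\delta(\Phi(m,d,i,\bs{\xi}))|\big]+G(\|d\|)$ with $G(\|d\|):=C\|d\|^3(1+\lambda^{-2}+\lambda^{-3}\|d\|)$; because $\|d+b_i(m,d)\|\le\|d\|$ along every trajectory and $G$ is increasing, each residual met under iteration is $\le G(\|d\|)$, so iterating, killing the $\beta^n$ term by boundedness of $W^\lambda,V^\lambda_\infty$, and summing the geometric series gives $|\delta(m,d)|\le(1-\beta)^{-1}G(\|d\|)$, which is the first inequality (absorbing $(1-\beta)^{-1}$ into $C$). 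For the second inequality I would repeat the verification against the policy-evaluation operator $\cT^{\lambda,u^\star}$ for the feedback $u^\star=\nu^\lambda(\alpha^\lambda)$; since $u^\star$ is the exact maximiser in \eqref{eq: smax}, one obtains the clean identity $\cT^{\lambda,u^\star}[W^\lambda]=W^\lambda+\beta(1-\beta)^{-1}\sum_i u^\star_iR_i$, so $W^\lambda$ solves the now-linear policy-Bellman equation up to the same residual, and the identical telescoping argument gives $|W^\lambda-V^{\lambda,\varphi^\lambda}_\infty|\le(1-\beta)^{-1}G(\|d\|)$. Combining via $0\le V^\lambda_\infty-V^{\lambda,\varphi^\lambda}_\infty\le|V^\lambda_\infty-W^\lambda|+|W^\lambda-V^{\lambda,\varphi^\lambda}_\infty|$ then completes the proof.
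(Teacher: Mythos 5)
Your proposal is correct in substance but follows a genuinely different route from the paper. The core analytic ingredient is the same: your one-step remainder bound $|R_i|\le C\|d\|^3(1+\lambda^{-2}+\lambda^{-3}\|d\|)$, including the decision to expand $\smaxld$ to third order, to handle the increment of $L^\lambda$ only to first order (exploiting that $\del_a\smaxld$ is bounded uniformly in $\lambda$ so no $\lambda^{-1}$ amplification hits $\delta L$), and your accounting of which terms produce the $\lambda^{-2}$ and $\lambda^{-3}\|d\|$ scales, is precisely the content of the paper's Lemmas \ref{lem: error of L} and \ref{lem: one step return error} (the paper freezes $L^\lambda(m,d)$ as a constant and corrects via Lipschitzness of $\smaxld$, which is equivalent to your mean-value treatment). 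Where you diverge is the globalisation: the paper proves a finite-horizon approximation (Theorem \ref{Thm: Expression of V_T}) by induction on $T$, with time-dependent indices $\alpha^\lambda_t$ and policies $\varphi^{\lambda,T}$, and then passes to the limit $T\to\infty$ using two Tauberian theorems plus a rather delicate coupling argument showing $V^{\lambda,\varphi^{\lambda,T}}_T\to V^{\lambda,\varphi^\lambda}$ (controlling the probability that the actions of $\varphi^{\lambda,T}$ and $\varphi^\lambda$ disagree). You instead work directly at infinite horizon, verifying that $W^\lambda=(1-\beta)^{-1}(\smaxld\circ\alpha^\lambda)$ is an approximate fixed point of the optimal Bellman operator and of the policy-evaluation operator for $u^*=\nu^\lambda(\alpha^\lambda)$ (your identity $\cT^{\lambda,u^*}[W^\lambda]=W^\lambda+\beta(1-\beta)^{-1}\sum_iu_i^*R_i$ is correct, since $u^*$ attains the sup in \eqref{eq: smax}), and then propagating the residual by iteration, using (H.\ref{Assump: Dynamic Asumption})$(i)$ so that $G(\|D_t\|)\le G(\|d\|)$ along every trajectory. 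This buys a cleaner argument: it eliminates the finite-horizon bookkeeping, both Tauberian theorems, and the action-coupling estimate entirely. What it costs is that you must justify, upfront, the fixed-point identities $V^\lambda_\infty=\cT^\lambda[V^\lambda_\infty]$ and $V^{\lambda,\varphi^\lambda}=\cT^{\lambda,u^*}[V^{\lambda,\varphi^\lambda}]$ for this MDP, which the paper only states heuristically in \eqref{eq: infinite horizon DPP} and whose rigorous proof it effectively replaces by the finite-horizon induction and limit.

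Two points need patching. First, your appeal to ``boundedness of $W^\lambda,V^\lambda_\infty$'' to kill the $\beta^n$ term is false as stated: under (H.\ref{Assump: Instantaneous cost}) only the derivatives of $f$ are bounded, so $f$, $\alpha^\lambda$, $W^\lambda$ and $V^\lambda_\infty$ may grow linearly in $m$. The fix is standard and is exactly what the paper uses in its own limiting step: by (H.\ref{Assump: Dynamic Asumption}) and Cauchy--Schwarz, $\sE_{m,d}|M_t^\varphi|\le|m|+Ct\|d\|$ uniformly over policies, so $|\delta|$ evaluated along any trajectory grows at most linearly in $t$ and $\beta^n\sup\sE|\delta(M_n,D_n)|\to0$ still holds. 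Second, and for the same reason (unbounded rewards), the two Bellman fixed-point identities are not automatic from textbook bounded-reward theory; you should either verify them in a weighted-sup-norm space with weight $1+|m|$, or note that they follow from the same finite-horizon truncation argument the paper employs. Neither issue is fatal, but both must be addressed for the verification route to be complete.
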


 Introducing the entropy regularisation in \eqref{eq: infinite horizon value} yields an error $\cO(\lambda)$ while error bounds for our approximation (Theorem \ref{Thm: main bound for value function and approximation}) explodes as $ \lambda \to 0$. To solve the unregularised problem \eqref{Eq: no entropy reward}, we choose the regularised parameter $\bs{\lambda}$ as a function of $(m,d) \in \Theta \times \cD$ to balance the trade-off between the regularisation error and the approximation error.
 
 \begin{Theorem}[Error bound in the unregularised control problem]
	\label{Thm: error bound consistent}
Suppose that (H.\ref{Assump: Instantaneous cost}) and (H.\ref{Assump: Dynamic Asumption}) hold and let $\bs{\lambda} : \Theta \times \cD \to (0,\infty)$ be such that  $\underline{c} \|d\|^\kappa \leq \bs{\lambda}(m,d) \leq \bar{c}\|d\|^\kappa$ for some constant $\underline{c}, \bar{c}, \kappa > 0$.  There exists a constant $C \geq 0$ such that for any   $(m,d) \in \Theta \times \cD$
\begin{equation}
\label{eq: sup-optimal consistent}
V(m,d) - V^{\varphi^{\bs{\lambda}}}(m,d) \leq C ( \|d\|^{3-2\kappa} + \|d\|^{4-3\kappa} + \|d\|^{\kappa} )
\end{equation}
where 
$V$  is the optimal (unregularised) value function and $ V^{ \varphi^{\bs{\lambda}}}$ is the (unregularised) value function corresponding to the feedback policy $\varphi^{\bs{\lambda}}(t,m,d) \mapsto \nu^{\bs{\lambda}(m,d)} \big( \alpha^{\bs{\lambda}(m,d)} (m,d) \big) $  (cf. \eqref{Eq: no entropy reward}).
\end{Theorem}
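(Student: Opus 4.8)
The plan is to trace the suboptimality of $\varphi:=\varphi^{\bs{\lambda}}$ to two sources and balance them through the scaling $\underline{c}\|d\|^\kappa\le\bs{\lambda}(m,d)\le\bar{c}\|d\|^\kappa$: the entropy regularisation, which costs $O(\bs{\lambda})=O(\|d\|^\kappa)$, and the asymptotic approximation of Theorem~\ref{Thm: main bound for value function and approximation}, which costs $C\|d\|^3(1+\bs{\lambda}^{-2}+\bs{\lambda}^{-3}\|d\|)$. Substituting $\bs{\lambda}\asymp\|d\|^\kappa$ turns the latter into $C(\|d\|^3+\|d\|^{3-2\kappa}+\|d\|^{4-3\kappa})$, in which $\|d\|^3$ is dominated by $\|d\|^{3-2\kappa}$ for small $\|d\|$; together with the $\|d\|^\kappa$ regularisation cost this is exactly the right-hand side of \eqref{eq: sup-optimal consistent}. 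The real work is to make both estimates rigorous despite the \emph{state-dependent} regularisation level.

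I would start from the performance-difference identity for the unregularised problem \eqref{Eq: no entropy reward}. Writing $X_t:=(M^\varphi_t,D^\varphi_t)$ and $q^i_t:=f_i(X_t)+\beta\,\mathbb{E}[V(\Phi(X_t,i,\bs{\xi}_{t+1}))\mid\cF_t]$, Bellman optimality gives $V(X_t)=\max_i q^i_t$, so
\[
V(m,d)-V^{\varphi}(m,d)=\mathbb{E}_{m,d}\Big[\sum_{t\ge0}\beta^{t}\Big(\max_i q^i_t-\sum_{i=1}^K U^\varphi_{i,t+1}q^i_t\Big)\Big],
\]
with $U^\varphi_{t+1}=\nu^{\lambda}(\alpha^{\lambda}(X_t))$ and $\lambda:=\bs{\lambda}(X_t)$. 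At each reachable state the nonnegative per-step gap splits as $\big(\max_i q^i-\sum_i\nu^{\lambda}_i(q)\,q^i\big)+\big(\sum_i(\nu^{\lambda}_i(q)-\nu^{\lambda}_i(\alpha^{\lambda}(X_t)))\,q^i\big)$. By \eqref{eq: smax}--\eqref{eq: nu} the first term equals $\max_i q^i-\smaxld(q)+\lambda\,\mathcal{H}(\nu^{\lambda}(q))$, whose magnitude is at most $\lambda\sup_{\Delta^K}\mathcal{H}$; since $\|D^\varphi_t\|$ is non-increasing by (H.\ref{Assump: Dynamic Asumption})$(i)$, we have $\bs{\lambda}(X_t)\le\bar{c}\|d\|^\kappa$, so the discounted sum of these terms is $O(\|d\|^\kappa)$ — the regularisation cost.

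The second term is the price of substituting the index $\alpha^{\lambda}(X_t)$ for the true values $q_t$ inside the softmax. Because $\nu^{\lambda}$ is invariant under adding a multiple of $\bm{1}_K$ and is $O(\lambda^{-1})$-Lipschitz, this term is controlled once $\alpha^{\lambda}(X_t)$ is shown to agree with $q_t$ up to an additive multiple of $\bm{1}_K$. Here I would apply Theorem~\ref{Thm: main bound for value function and approximation} \emph{pointwise}: it holds for every $(m,d)$ and every constant level, so instantiating it at each reachable state with the local level $\bs{\lambda}(X_t)$ identifies $V$ with $(1-\beta)^{-1}(\smaxld\circ\alpha^{\lambda})$ up to an error of order $\|D^\varphi_t\|^3(1+\bs{\lambda}(X_t)^{-2}+\bs{\lambda}(X_t)^{-3}\|D^\varphi_t\|)$; feeding this into the definition \eqref{eq: alpha}--\eqref{eq: B, M, Sigma Expression}, which is precisely the discrete It\^o surrogate for $f+\beta\mathbb{E}[V(\Phi)]-\beta V\,\bm{1}_K$, yields the required agreement $\alpha^{\lambda}(X_t)\approx q_t+c\,\bm{1}_K$. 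Using $\|D^\varphi_t\|\le\|d\|$ and $\bs{\lambda}\asymp\|d\|^\kappa$ once more, the discounted sum of these terms is $O(\|d\|^{3-2\kappa}+\|d\|^{4-3\kappa})$.

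The main obstacle is the mismatch between the regularisation levels at a state $x$ and at its successors that is unavoidable in the step above: the continuation $V(\Phi(x,i,\cdot))$ must be approximated at the successor level $\lambda':=\bs{\lambda}(\Phi(x,i,\cdot))$, whereas the surrogate $\alpha^{\lambda}$ is built from the current level $\lambda=\bs{\lambda}(x)$, and the sandwich hypothesis permits $\bs{\lambda}$ to jump by a bounded multiplicative factor in a single step. I would quantify the resulting $\lambda$-regularity: $\lambda\mapsto\smaxld(a)$ is $(\sup_{\Delta^K}\mathcal{H})$-Lipschitz, while $\lambda\mapsto\alpha^{\lambda}$ varies by $O(\|d'\|^{2-\kappa})$ through the $\lambda^{-1}$ factor carried by $\Sigma^{\lambda}$ in \eqref{eq: B, M, Sigma Expression}, once (H.\ref{Assump: Dynamic Asumption})$(v)$ is used to make $b_i,\mu_i,\sigma_i\sigma_i^\top$ of order $\|d'\|^2$. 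This bounds the mismatch by $O(\|d'\|^{2-\kappa}+\|d'\|^\kappa)$, which is dominated by $\|d'\|^{3-2\kappa}+\|d'\|^\kappa$ for every $\kappa>0$ and so does not degrade the rate. Verifying this domination uniformly over the (variance-decreasing) reachable set — equivalently, checking that the self-consistent candidate $W(x):=(1-\beta)^{-1}\smaxld(\alpha^{\lambda}(x))$ evaluated at the local level $\lambda=\bs{\lambda}(x)$ solves the state-dependent regularised Bellman equation up to the claimed residual — is the crux of the argument.
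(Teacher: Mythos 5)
There is a genuine gap, and it sits exactly where your argument does its "real work": the bound on the second per-step term $\sum_i\big(\nu^{\lambda}_i(q_t)-\nu^{\lambda}_i(\alpha^{\lambda}(X_t))\big)q^i_t$. Your plan is to show $\alpha^{\lambda}(X_t)\approx q_t + c\,\bm{1}_K$ and then pay the $O(\lambda^{-1})$ Lipschitz constant of $\nu^{\lambda}$. But Theorem \ref{Thm: main bound for value function and approximation} identifies $(1-\beta)^{-1}\big(\smaxld\circ\alpha^{\lambda}\big)$ with the \emph{regularised} value $V^{\lambda}_\infty$, not with $V$; your $q^i_t$ is built from the \emph{unregularised} $V$. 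The gap $V-V^{\lambda}_\infty$ is only $O(\lambda)$, and it is a state-dependent function evaluated at the successor states $\Phi(X_t,i,\cdot)$, which differ across arms $i$ — so it cannot be absorbed into the additive multiple of $\bm{1}_K$. Consequently the best you can certify is $\mathrm{dist}\big(q_t-\alpha^{\lambda}(X_t),\,\sR\bm{1}_K\big)\le C\big(\lambda+\|d\|^3(1+\lambda^{-2}+\lambda^{-3}\|d\|)\big)$, and multiplying by the $O(\lambda^{-1})$ Lipschitz constant produces a per-step error of order $\lambda^{-1}\cdot\lambda = O(1)$ (plus $\lambda^{-1}$ times the asymptotic error, giving $\|d\|^{3-3\kappa}+\|d\|^{4-4\kappa}$, which is also worse than the claimed $\|d\|^{3-2\kappa}+\|d\|^{4-3\kappa}$). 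The resulting bound is vacuous — in particular at the paper's recommended choice $\kappa=1$ — so the Lipschitz-of-softmax route cannot deliver \eqref{eq: sup-optimal consistent}. This is a structural problem with comparing \emph{probabilities} $\nu^{\lambda}(q)$ and $\nu^{\lambda}(\alpha^{\lambda})$: any ingredient with an $O(\lambda)$ error gets amplified by $\lambda^{-1}$.

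The paper's proof never differences softmax probabilities. It compares \emph{values}: with the regularisation level frozen at the constant $\lambda=\bs{\lambda}(m,d)$, it writes $Q\big(\varphi^{\bs\lambda}(1,m,d),m,d\big)-V(m,d)$ (where $Q$ is the one-step value of the randomised action under the true continuation $V$) as a sum of (a) policy-suboptimality terms in the \emph{regularised} problem, controlled by the second estimate of Theorem \ref{Thm: main bound for value function and approximation} at the current state and at the successors (using (H.\ref{Assump: Dynamic Asumption})$(i)$ so that $\|d'\|\le\|d\|$), and (b) regularisation gaps $V-V^{\lambda}$ and the entropy term, each entering only \emph{additively} as $O(\lambda)$. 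This yields the one-step inequality \eqref{eq:diff Q value} with the claimed rate, and the theorem then follows by an induction over hybrid policies (play $\varphi^{\bs\lambda}$ for $N$ steps, then the optimal policy), passing to the limit $N\to\infty$. Note also that the state-dependence of $\bs{\lambda}$, which you treat as the crux, is dissolved in the paper by this freezing device: Theorem \ref{Thm: main bound for value function and approximation} holds for every fixed $\lambda>0$, so one simply instantiates it at the current state's level and never needs Lipschitz-in-$\lambda$ estimates. If you want to salvage your performance-difference route, you would have to replace the Lipschitz step with a sharper comparison (e.g.\ a Bregman-divergence/strong-duality bound, which makes the offending term $O(\lambda+\lambda^{-1}E^2)$ rather than $O(\lambda^{-1}(\lambda+E))$), but as written the proposal does not prove the stated bound.
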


We see from \eqref{eq: sup-optimal consistent} that when $\kappa = 1$, this asymptotic strategy introduces error of order $\cO(\|d\|)$, compared with $\cO(\|d\|^{1/2})$ for the naive greedy approach (Section \ref{sec:learning premium}). 

The follow theorem shows that our approximate optimal strategy can identify the true model parameter asymptotically when the expected cost is uniformly bounded.
\begin{Theorem}[Complete learning]
\label{Thm: Complete Learning for bounded}
Suppose that (H.\ref{Assump: Instantaneous cost}) and (H.\ref{Assump: Dynamic Asumption}) and let $\bs{\lambda} : \Theta \times \cD \to (0,\infty)$ be such that  $\underline{c} \|d\|^\kappa \leq \bs{\lambda}(m,d) \leq \bar{c}\|d\|^\kappa$ for some constant $\underline{c}, \bar{c} > 0$ and $\kappa \in [0,2]$. Suppose further that
\begin{enumerate}[(i)]
    \item For any policy $\varphi \in \Psi$, the event $\{ \|D^{\varphi,m,d}_t \| \to 0 \} \supseteq \{ \sum_{t=1}^\infty \sI(A^{\varphi,m,d}_t = i) = \infty \; \forall i \in [K] \} $ \footnote{This says that if every arm is chosen infinitely often, then the path $(D^{\varphi,m,d}_t)$ corresponding to $\varphi$ converges to $0$.}.
    \item The cost function $f$ is uniformly bounded.
    \item The function $S(a) := \sup_{u \in \Delta^K} \big(\sum_{i=1}^K u_i a_i + \mathcal{H}(u)\big)$  where $\cH$ is a smooth entropy function (Definition \ref{def: smooth entropy}) satisfies\footnote{The Shannon entropy  $\mathcal{H} (u):= -\sum_{i=1}^K u_i \ln u_i$ satisfies this property.}: for any compact set $K \subseteq \mathbb{R}^K$, there exists a non-empty open ball, $B(r)$, such that $B(r) \cap \del_a S(K) = \emptyset$.
\end{enumerate}
 Then for all $(m,d) \in \Theta \times \cD$, $\|D^{\varphi^{\bs{\lambda}},m,d}_t \| \to 0$ a.s. 
where 
$(D^{\varphi^{\bs{\lambda}},m,d}_t)$  is the path of the $d$ parameter corresponding to the feedback policy $\varphi^{\bs{\lambda}}(m,d) \mapsto \nu^{\bs{\lambda}(m,d)} \big( \alpha^{\bs{\lambda}(t,m,d)} (m,d) \big) $.
\end{Theorem}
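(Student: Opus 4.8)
The plan is to derive the claim from hypothesis (i): it suffices to show that, $\sP_{m,d}$-almost surely, every arm is chosen infinitely often, for then $\{\|D^{\varphi^{\bs{\lambda}}}_t\|\to0\}$ has full probability. Write $(M_t,D_t):=(M^{\varphi^{\bs{\lambda}},m,d}_t,D^{\varphi^{\bs{\lambda}},m,d}_t)$, $\bs{\lambda}_t:=\bs{\lambda}(M_t,D_t)$ and $U_{t+1}:=\nu^{\bs{\lambda}_t}(\alpha^{\bs{\lambda}_t}(M_t,D_t))$, so that $\sP_{m,d}(A_{t+1}=i\mid\cF_t)=U_{i,t+1}$. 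The events $\{A_{t+1}=i\}$ are $\cF_{t+1}$-measurable, so L\'evy's conditional Borel--Cantelli lemma gives, almost surely,
\[
\{\text{arm }i\text{ chosen infinitely often}\}=\Big\{\textstyle\sum_{t}U_{i,t+1}=\infty\Big\}\qquad(i\in[K]).
\]
Thus I only need to show $\sum_t U_{i,t+1}=\infty$ for each $i$ on the event $G:=\{\|D_t\|\not\to0\}$, and then conclude.

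Two preliminary facts drive the estimate. First, from $\smaxld(a)=\lambda\,S(a/\lambda)$ we have the scaling $\nu^{\lambda}(a)=\del_a\smaxld(a)=\nu^1(a/\lambda)$ with $\nu^1=\del_a S$; applied with $\lambda=\bs{\lambda}_t$ this gives $U_{t+1}=\nu^1(a_t)$, where $a_t:=\alpha^{\bs{\lambda}_t}(M_t,D_t)/\bs{\lambda}_t$. Second, the index $\alpha^{\bs{\lambda}_t}$ is uniformly bounded: by (H.\ref{Assump: Instantaneous cost}) the coefficients $\cB^{\lambda},\cM^{\lambda}$ and $\sum_j\nu^{\lambda}_j\del^2_mf_j$ are bounded and the remaining term of $\Sigma^{\lambda}$ is $O(\lambda^{-1})$, while (H.\ref{Assump: Dynamic Asumption})$(v)$ gives $|b_i|,|\mu_i|,|\sigma_i\sigma_i^\top|\le C\|d\|^2$; hence $|L^\lambda_i(m,d)|\le C\|d\|^2(1+\lambda^{-1})$. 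Since $\bs{\lambda}(m,d)\ge\underline{c}\|d\|^\kappa$ with $\kappa\le2$, we get $\bs{\lambda}_t^{-1}\|D_t\|^2\le\underline{c}^{-1}\|D_t\|^{2-\kappa}\le C$ on the compact set $\cD$, and with $f$ bounded (hypothesis (ii)) this yields a deterministic $C_\star$ with $|\alpha^{\bs{\lambda}_t}(M_t,D_t)|\le C_\star$ for all $t$.

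Now fix $i$ and a rational $\eps>0$ and set $G_\eps:=\{\|D_t\|\ge\eps\text{ infinitely often}\}$, so $G=\bigcup_{\eps\in\sQ_{>0}}G_\eps$. On $G_\eps$, at each of the infinitely many times $t$ with $\|D_t\|\ge\eps$ we have $\bs{\lambda}_t\ge\underline{c}\eps^\kappa$, hence $|a_t|\le C_\star/(\underline{c}\eps^\kappa)=:R_\eps$. Hypothesis (iii), together with the smoothness of the entropy, ensures that the continuous map $\nu^1=\del_a S$ keeps every component bounded away from $0$ on the compact ball $\overline B(R_\eps)$, so that $\delta_i(\eps):=\min_{|a|\le R_\eps}\nu^1_i(a)>0$. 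Therefore $U_{i,t+1}=\nu^1_i(a_t)\ge\delta_i(\eps)$ at those infinitely many times, giving $\sum_t U_{i,t+1}=\infty$ on $G_\eps$, and hence on $G$. By the Borel--Cantelli equivalence above, every arm is chosen infinitely often on $G$ up to a null set; by hypothesis (i) this places $G$ inside $\{\|D_t\|\to0\}=G^c$ almost surely, forcing $\sP_{m,d}(G)=0$, i.e. $\|D^{\varphi^{\bs{\lambda}}}_t\|\to0$ a.s.

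The main obstacle is the uniform positivity $\delta_i(\eps)>0$ of the one-step selection probabilities over the compact index set $\overline B(R_\eps)$. It rests on two points: the deterministic boundedness of $\alpha^{\bs{\lambda}_t}$, where the constraint $\kappa\le2$ enters through $\bs{\lambda}_t^{-1}\|D_t\|^2\lesssim\|D_t\|^{2-\kappa}$; and the non-degeneracy of $\nu^1$ on compacts, which is exactly what hypothesis (iii) is designed to supply for a general smooth entropy (for the Shannon entropy it is automatic from the explicit softmax formula). Translating (iii) into this uniform lower bound is the delicate step; the remaining care is merely to keep the threshold $\eps$ and the constant $\delta_i(\eps)$ deterministic, achieved by the countable union over rational $\eps$, so that the only genuinely almost-sure inputs are L\'evy's lemma and hypothesis (i).
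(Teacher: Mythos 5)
Your proof is correct, and its skeleton matches the paper's: both first establish the deterministic bound $\sup_{(m,d)}|\alpha^{\bs{\lambda}(m,d)}(m,d)|\le C$ from $|L^{\bs{\lambda}}|\le C\|d\|^2\big(1+\bs{\lambda}^{-1}\big)$, the constraint $\kappa\le 2$ and compactness of $\cD$; both then use hypothesis (iii) to bound every component of the selection probability below on the compact set of normalised indices arising when $\|D_t\|\ge\epsilon$; both conclude that every arm is played infinitely often on the bad event and finish with hypothesis (i). The genuine difference is the probabilistic mechanism. The paper works with $E_\epsilon:=\{\|D_t\|>\epsilon\ \forall t\}$ and multiplies conditional probabilities, $\sP\big(\bigcap_{t=n}^N G_{i,t}\mid E_\epsilon\big)\le(1-r)^{N-n+1}$; since $E_\epsilon$ constrains the entire future path and is not $\cF_t$-measurable, the step $\sP\big(G_{i,t}\mid E_\epsilon\cap\bigcap_{s=n}^{t-1}G_{i,s}\big)\le 1-r$ does not follow directly from the pointwise bound $U_{i,t}>r$ on $\{\|D_{t-1}\|>\epsilon\}$ and needs additional justification (conditioning on the future event that $\|D_t\|$ stays large could in principle bias against frequent play). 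You instead work with $G_\epsilon:=\{\|D_t\|\ge\epsilon\ \text{i.o.}\}$ and apply L\'evy's conditional Borel--Cantelli lemma to the adapted events $\{A_{t+1}=i\}\in\cF_{t+1}$, for which $\sP(A_{t+1}=i\mid\cF_t)=U_{i,t+1}$ is exactly the quantity you bound; this avoids conditioning on a non-adapted event altogether, and your decomposition $G=\bigcup_{\epsilon\in\sQ_{>0}}G_\epsilon$ also dispenses with the implicit appeal to monotonicity of $\|D_t\|$ that the paper's final union over $E_{1/n}$ requires. What each approach buys: the paper's is shorter and self-contained, yours is tighter on measurability and arguably repairs the one loose step in the published argument. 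Note that both proofs lean identically on hypothesis (iii) for the uniform positivity ($r>0$ in the paper, $\delta_i(\epsilon)>0$ for you) in a single asserted line, so the delicacy you flag there is shared with, not worse than, the original.
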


\begin{Remark} 
\label{rem:Kalman} 
Our main results assume (H.\ref{Assump: Dynamic Asumption}), which is inspired by the multi-armed bandit problem (Section \ref{sec: Example}) for learning.  The nature of the learning results in a transient state process for $D^\varphi$. This is reflected in (H.\ref{Assump: Dynamic Asumption})$(i)$ where our precision $\|D^{\varphi}_t\|$ is non-increasing over $t \in \sN$. In general, we can extend our approximation result to the case when our precision is recurrent and takes value in a  (small) compact set $\cD$, as often happens when using more general Kalman filtering settings.  All of our analysis follows by using $\|d\| \leq h:=\sup_{d \in \cD} \|d\|$ and $\|d + b_i(m,d)\| \leq h$ for all $i \in [K]$.  In particular, we have the following theorem.
\end{Remark}
\begin{Theorem}
\label{Thm:Kalman}
Suppose that  (H.\ref{Assump: Instantaneous cost}) and (H.\ref{Assump: Kalman}) hold. Then  Theorem \ref{Thm: main bound for value function and approximation} and Theorem \ref{Thm: error bound consistent} hold with all $\|d\|$ in the upper-bound replaced by $h:=\sup_{d \in \cD} \|d\|$. 
\end{Theorem}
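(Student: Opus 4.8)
The plan is to treat this as a corollary of the proofs of Theorem~\ref{Thm: main bound for value function and approximation} and Theorem~\ref{Thm: error bound consistent} rather than as a fresh argument: I would re-run those proofs essentially verbatim and isolate the single structural role played by the monotonicity hypothesis (H.\ref{Assump: Dynamic Asumption})$(i)$, replacing it at that one point by the uniform bound afforded by (i$'$). Concretely, under (H.\ref{Assump: Kalman}) condition (i$'$) guarantees $D^{\varphi,m,d}_t \in \cD$ for every $t$ and every $\varphi \in \Psi$, so that $\|D^{\varphi,m,d}_t\| \leq h$ along every trajectory; this is the weakened substitute for the transient conclusion $\|D^{\varphi,m,d}_t\| \leq \|d\|$ that was available before.

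The engine of both earlier proofs is a one-step error estimate obtained from the discrete It\^o (second-order Taylor) expansion of $\smaxld \circ \alpha^\lambda$ along the dynamics \eqref{eq: transition map}, together with the definitions of $\alpha^\lambda$ and $L^\lambda$ in \eqref{eq: alpha}--\eqref{eq: L expression}, which are precisely designed to cancel the first- and second-order terms. The magnitude of the residual one-step error at a state with precision $d'$ is controlled purely by the coefficient bounds (H.\ref{Assump: Dynamic Asumption})$(v)$, i.e. by the powers $\|d'\|^2$, $\|d'\|^2$ and $\|d'\|$ on $b_i,\mu_i,\sigma_i\sigma_i^\top$ and their derivatives, together with the moment bounds (iii)--(iv) and the explosion of the derivatives of $\smaxld$ in $\lambda^{-1}$. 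Since (H.\ref{Assump: Kalman}) leaves (ii)--(v) untouched, every one of these one-step estimates carries over unchanged; what changes is only the precision at which they are evaluated.

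I would then propagate the one-step error through the infinite horizon exactly as before, using the dynamic programming identity behind \eqref{eq: V regularise approximation} and the geometric factor $\beta(1-\beta)^{-1}$ arising from \eqref{eq: approximate alpha} to sum the contributions. In the transient case each summand is dominated by a power of the initial $\|d\|$ because $\|D_t\|$ is non-increasing; here each summand is instead dominated by the corresponding power of $h$ because $\|D_t\|\le h$ uniformly. The contraction/telescoping step that converts the fixed-point relation \eqref{eq: approximate alpha} into a closed-form bound is insensitive to this substitution, so the right-hand side of Theorem~\ref{Thm: main bound for value function and approximation} becomes $C\,h^3(1+\lambda^{-2}+\lambda^{-3}h)$. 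For Theorem~\ref{Thm: error bound consistent}, the state-dependent regulariser $\bs{\lambda}(m,d)$, which is comparable to $\|d\|^\kappa$, is now balanced at the scale $h^\kappa$ rather than $\|d\|^\kappa$, yielding $C(h^{3-2\kappa}+h^{4-3\kappa}+h^\kappa)$.

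The main point requiring genuine care is to confirm that monotonicity is never used beyond the pointwise bound $\|D_t\|\le\|d\|$: it must not be hidden in the summability of the error series, nor in any tacit argument that the regularisation parameter shrinks along trajectories. I expect this to hold because the discount factor $\beta<1$ alone secures summability, and because the estimates (H.\ref{Assump: Dynamic Asumption})$(v)$ already render each per-step contribution uniformly $\cO(h^3)$; the uniform bound $\|D_t\|\le h$ then suffices everywhere the chain $\|D_t\|\le\|d\|$ was previously invoked. This is exactly the content of Remark~\ref{rem:Kalman}, and the remaining work is the routine book-keeping of carrying $h$ through the constants.
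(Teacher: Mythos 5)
Your proposal is correct and is essentially the paper's own argument: the paper proves Theorem~\ref{Thm:Kalman} exactly as sketched in Remark~\ref{rem:Kalman}, by re-running the proofs of Theorems~\ref{Thm: main bound for value function and approximation} and~\ref{Thm: error bound consistent} with the monotonicity bound $\|D^{\varphi}_t\|\leq\|d\|$ replaced throughout by the uniform bound $\|D^{\varphi}_t\|\leq h$ (and $\|d+b_i(m,d)\|\leq h$), which is available since $(i')$ keeps the precision process in the compact set $\cD$. Your identification that (H.\ref{Assump: Dynamic Asumption})$(i)$ enters only through these pointwise bounds---in the induction steps $Q_{\lambda,d+b_i(m,d)}\leq Q_{\lambda,d}$ and $|R_N(m,d)|\leq\eps(d)$, while summability rests solely on $\beta<1$---is precisely the content of that remark.
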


\subsection{Summary of the ARC Algorithm}
\label{sec: Summay ARC}
We now summarise how our approximation of the regularised control problem yields an explicit algorithm for a (correlated) multi-armed bandit. 

Recall the setting of our bandit which has $K$ arms with an unknown parameter $\bs{\theta}$ as described at the beginning of Section \ref{sec: Idea ARC}. When the $i$-th arm is chosen, we observe a random variable $Y^{(i)} \sim \pi_i(\cdot | \bs{\theta})$ and obtain a reward $r_i(Y^{(i)})$. Suppose further that the observation distribution $\pi_i$ and the prior $\pi$ of the unknown parameter $\bs{\theta}$ form a conjugate pair for all $i\in [K]$ and we can parameterise the posterior distribution by $(m,d)$ such that the dynamics of these posterior parameters satisfy (H.\ref{Assump: Dynamic Asumption}).   Broadly speaking,  $m$ will be a posterior mean of $\bs{\theta}$ and $d$ will behave as a posterior variance (or some quantity which is inversely proportional to the number of observation).

In summary, to use the ARC algorithm, we need to evaluate some functions explicitly and choose a few hyper-parameters for the decision making.
\paragraph{Hyper-parameters}
\begin{enumerate}[(i)]
\item \textbf{The discount factor $\beta$:} This parameter reflects how long we are considering this sequential decision. A heuristic choice is $\beta = 1-1/T$ where $T$ is the number of rounds of decisions.
\item \textbf{Smooth max approximator $S$:} This is a function to approximate the maximum function (which yields the corresponding entropy $\cH$).  For computational convenience, we propose to take $S(a)=\log\big(\sum_i \exp(a_i)\big)$ which corresponds to the Shannon entropy, $\mathcal{H} (u):= -\sum_{i=1}^K u_i \ln u_i$. More general choices of $S$  can be chosen by considering Definition \ref{def: smooth entropy}. 
\item \textbf{Regularised function $\bs{\lambda}$:} We choose a regulariser (a function of $m$ and $d$) to reflect our learning preferences.  We see in \eqref{eq: sup-optimal consistent} that choosing $\bs{\lambda}(m,d) = \rho \|d\|$ where $\rho > 0$ gives the least order for the sub-optimal bound.  
\end{enumerate}

\paragraph{Relevant functions}
\begin{enumerate}[(i)]
\item \textbf{The expected reward and its first two derivatives:} We need to evaluate $f_i(m,d) := \sE_{m,d}[r_i(Y^{(i)})]$ and compute $\del_d f_i(m,d),  \del_m f_i(m,d)$ and $\del^2_m f_i(m,d)$.
\item \textbf{Evolution of posterior parameter:} Let $(\bs{m}_i, \bs{d}_i)$ be the posterior update after observing  $Y^{(i)}$. We need to evaluate
$$\mu_i(m,d) := \mathbb{E}_{m,d}[\bs{m}_i -m],\q \sigma_i \sigma_i^\top(m,d) := \text{Var}_{m,d}(\bs{m}_i) \q \text{and} \q
 b_i(m,d) := \mathbb{E}_{m,d}[\bs{d}_i -d].$$
 \item \textbf{Learning function:} Given hyper-parameters $(\beta, S, \bs{\lambda})$ and problem environment ($f, \mu_i, \sigma_i, b_i$), we can evaluate the function $(\lambda,m,d) \mapsto L^\lambda(m,d)$ via \eqref{eq: L expression} with $\smaxld(a) := \lambda S(a/\lambda)$ and write $\nu^\lambda(a) := \del_a \smaxld(a) = \del_a S(a/\lambda)$.
\end{enumerate}
  We describe the procedure of the ARC algorithm as follow.

\begin{algorithm}[H]
\label{Alg: ARC}
\SetAlgoLined
 \textbf{Input} $m_0,d_0, \beta, S, \bs{\lambda}$ \;
 Set $(m,d) \mapsfrom (m_0, d_0)$ \;
 \For{t = 1, 2, ...}{
Evaluate $U := \nu^{\bs{\lambda}(m,d)}\big(f(m,d) + \beta(1-\beta)^{-1} L^{\bs{\lambda}(m,d)}(m,d) \big)$ \;
 Sample $A \sim \text{Random}( [K],  U)$ \;
 Choose the $A$-th arm, observe $Y^{(A)}$ and collect the reward $R^{(A)}(Y^{(A)})$ \;
 Update posterior parameter $(m,d) $ from the observation $Y^{(A)}$ \;
 }
 \caption{ARC Algorithm }
\end{algorithm}

\section{Comparison with other approaches to bandit problems}
\label{sec: comparison}
\subsection{General Approaches}
\label{sec: bandit review}
 There are various approaches to study bandit problems, and theoretical guarantees are typically proved in specific settings (see e.g. Lattimore and Szepesv\'ari  \cite{Lattimore_book}).  We summarise the broad idea of a few approaches and extend them to our setting using Bayesian inference when needed.  For simplicity, we write $f_i(m,d) := \sE_{m,d}[r_i(Y_t^{(i)})]$ and $\sE_\theta[\cdot] := \sE[\cdot|\bs{\theta} = \theta]$, write $\pi^{\bs{\theta}}_{m,d}$ for the posterior/prior of $\bs{\theta}$ with the parameterisation $(m,d)$  and write $(\bs{m}_i, \bs{d}_i)$ for the posterior update of $(m,d)$ when the $i$-th arm is chosen. 

\begin{itemize}
\item \textbf{$\epsilon$-Greedy ($\epsilon$-GD) \cite{Greedy_regret1, eps_Greedy}:}  At each time,  we choose an arm with the maximal expected reward
$A^{\eps-GD} = \argmax_{i} f_i(m,d)$ with probability $1-\epsilon$; and  choose uniformly at random with probability $\epsilon$.

\item \textbf{Boltzmann Exploration (BE) \cite{BE_Alg, BGE_Alg}:} At each time,  we choose an arm using the probability simplex $U^{BE} = \nu^{\bs{\lambda}(m,d)}\big(f(m,d)\big)$ where $\nu_i^\lambda(a) := \exp(a_i/\lambda)/\big(\sum_j \exp(a_j/\lambda)\big)$.

\item \textbf{Thompson Sampling (TS) \cite{Thompson_original, Tutorial_on_Thompson_Sampling, Russo_Roy_Thompson}:} At each time, a sample $\hat{\bs{\theta}}$ is taken from a posterior distribution $\pi^{\bs{\theta}}_{m,d}$.  We then choose the arm with $A^{TS} = \argmax_{i }\sE_{\hat{\bs{\theta}}}[r_i(Y^{(i)})]$.

\item \textbf{Upper Confidence Bound (UCB) \cite{Original_UCB, UCB_tuned,  Bayes_UCB} :} At each time $t$,  we choose an arm with the maximum index $A^{Bayes-UCB}_t = \argmax_{i} \cQ_{m,d}\big(1-t^{-1} (\log T)^{-c}, \sE_{\bs{\theta}}[r_i(Y^{(i)})]\big)$ where $T$ is the number of plays,  $\cQ_{m,d}(p,X)$ is the $p$-quantile of the random variable $X$ conditional on the posterior parameter $(m,d)$.  Here, $c$ is a hyper-parameter that can be chosen by the decision maker.  Kaufmann et al. \cite{Bayes_UCB} prove a theoretical guarantee of optimal order for the Bernoulli bandit when $c \geq 5$; their simulations suggest that $c=0$ typically performs best.

N.B. There are many variations of the UCB algorithm proposed in various settings. The algorithm described above is known as the Bayes-UCB which has a clear extension to the general setting described in this paper.

\item \textbf{Knowledge Gradient (KG)  \cite{Knowledge_Gradient}:} At each time $t$, we choose an arm with the maximum index
$A^{KG} = \argmax_i \Big(f_i(m,d) + \beta(1-\beta)^{-1} \big( \sE_{m,d}[\max_{j} f_j(\bs{m}_i, \bs{d}_i)] - \max_{j} f_j(m,d)\big)\Big)$.

N.B. In \cite{Knowledge_Gradient}, the KG algorithm is proposed for the classical and linear Gaussian bandit together with an explicit expression for $\sE_{m,d}[\max_{j} f_j(\bs{m}_i, \bs{d}_i)] $.  In general, we may estimate this expression by using Monte-Carlo simulation, but this can be costly.

\item \textbf{Information-Directed Sampling (IDS) \cite{IDS, IDS_HETEROSCEDASTIC_NOISE}:} At each time, for each probability vector $u \in \Delta^K$, we define a (one-step) regret by
$\delta(u) := \sum_{i}u_i \mathbb{E}_{m,d}\big[r_{A^*}(Y^{(A^*)}) - r_i(Y^{(i)})\big]$ 
where $A^* := \argmax_{i}\mathbb{E}_{\bs{\theta}}[r_i(Y^{(i)})]$, and define the information gain by $g(u) := \sum_{i}u_i \big( \cH^{Sh}_{m,d}(\bs{\theta}) - \sE_{m,d}[ \cH^{Sh}_{\bs{m}_i, \bs{d}_i}(\bs{\theta})]\big)$ where $\cH^{Sh}_{m,d}(\bs{\theta})$ is the (differential) entropy of the posterior $\pi^{\bs{\theta}}_{m,d}$.  We then choose an arm using the probability simplex
$U^{IDS} = \argmin_{u \in \Delta^K}\big({\delta(u)^2}/{g(u)}\big)$.

N.B. The information gain $g$ considered above is used in  Kirschner and Krause \cite{IDS_HETEROSCEDASTIC_NOISE} which is different from the original proposed in Russo and  and Roy \cite{IDS}; but is more computationally efficient. 
\end{itemize}
\paragraph{ARC as a combination of the other algorithms:} The ARC algorithm appears as a combination of KG and BE through It\^o's lemma, which results in a random index decision whose index can be decomposed as the sum between Exploitation gain,  $f$, and Learning premium, $L^\lambda$, as in UCB.  This learning premium takes into account asymmetry of available information and a curvature when the reward is non-linear (see section \ref{sec: address limit} for explicit evaluation of $L^\lambda$).  

More precisely, recall that ARC makes choices based on the (arg)softmax $\nu^\lambda(\alpha(m,d))$ which fundamentally picks an arm with the maximum index $\alpha_i(m,d)= f_i(m,d) + \beta(1-\beta)^{-1} L_i^\lambda(m,d)$. This maximum is determined at random as in the BE, but using a modified index as in UCB.  The decision is modified through the learning term   $L_i^\lambda(m,d)$, which can be seen as 
$L_i^\lambda(m,d)  \approx   \sE_{m,d}[\max_{j} f_j(\bs{m}_i, \bs{d}_i)] - \max_j f_j(m,d)$.  In short, $\alpha(m,d)$ is an approximation of the KG index using smooth max approximation and a second order expansion through It\^o's lemma.  

\paragraph{Computation Efficiency:}
$\epsilon$-GD, BE, TS, UCB and ARC are algorithms where we can often find an explicit expression and thus requires low computational power.  KG, on the other hand, requires evaluation of the expectation of the maximum involving a high-dimension state (which only has an explicit expression in the Gaussian case).  Implementing KG in general can be achieved by Monte-Carlo simulation which is costly.  Similarly,  the IDS requires evaluation of one-step regret and information gain which is expensive in general.  

\subsection{ Shortcomings of bandit algorithms}
\label{sec: limit}
Even though many of the algorithms discussed in Section \ref{sec: bandit review} perform well in many settings,  they may fail to address a few phenomena which may appear in learning.  For clarity, we shall illustrate these shortcomes in extreme scenarios. Many practical examples of these scenarios can be found in Russo and  and Roy \cite{IDS}.

\paragraph{Incomplete Learning:}
Consider a decision rule which depends only on the posterior parameter $(m,d)$. If we start with a bad posterior/prior, we may end up always choosing the worse option, if our early experiences are misleading.

Consider a bandit with $2$ arms: The first arm always gives a fixed reward; the second arm's reward is generated from an unknown distribution. For any strategies that satisfy the property described above, whenever this strategy decides to play the first arm, it will never play the second arm again. However, we can see that if the mean reward of the first arm has unbounded support, the probability that the reward of the first arm is larger than the second arm is strictly positive. This probability never changes when the first arm is abandoned. This means that we have a strictly positive probability of always playing sub-optimal options. 

This is a problem for $\epsilon$-GD (when $\epsilon = 0$) and KG algorithms.
\paragraph{Information Ignorance:}
Many works on bandit assume that all arms have an identical structure. Therefore, they fail to capture the setting where each arm provides different information. 

Consider a bandit with $100$ arms where every arm except the first always gives a strictly positive reward from an unknown distribution.  The first arm is informative but yields no reward; it always gives a reward $0$, but will allow us to observe rewards of all other arms. Playing the first arm helps us to learn $\bs{\theta}$ faster.  Unfortunately, this arm will be ignored by many bandit algorithms as it never has the best reward and many bandit algorithms choose an arm based on the posterior distribution for the reward, but do not consider the information gain.

This is a problem for $\epsilon$-GD (when $\epsilon = 0$), TS,  and UCB algorithms.   It is worth noting that BE and $\epsilon$-GD  (for $\epsilon > 0$) only choose the first arm by random. Hence, they still play worse arms, even if they do not give any meaningful information and never yield the best reward.

\paragraph{Horizon effect:}
A few algorithms are designed based on the principle that the decision should not vary when the terminal time is far away and hence propose a stationary policy. When the horizon is short,  these algorithms may still choose to explore, even if these explorations do not benefit future decisions.

This is a problem for $\epsilon$-GD, BE, TS and IDS algorithms.

\subsection{Addressing Learning using ARC}
\label{sec: address limit}
The ARC algorithm addresses flaws discussed in Section \ref{sec: limit} .  Since we can vary a hyper-parameter $\beta$, which has a natural interpretation as the future weight, we directly address the \textit{horizon effect }.  We also see in Theorem \ref{Thm: Complete Learning for bounded} that ARC is a \textit{complete learning} algorithm in the sense that the uncertainty parameter $(D^\varphi_t)$ 
(posterior variance) 
converges to $0$ as $t \to 0$ which means that we can asymptotically identify the true parameter $\bs{\theta}$. 

To see how ARC address the \textit{information ignorance}, we consider an explicit computation of the ARC algorithm for the Gaussian bandit with additional information (Example \ref{Ex: bandit adding info}).
 
We recall the setting of this example here for convenience of the reader. When the $i$-th arm is chosen, we observe a random variable $\cL(Y^{(i,j)}| \bs{\theta}) \sim N(\bs{\theta}_j, s_{ij}^{-1})$ for $j =1, 2, ..., K$ where $s_{ij} \in [0,\infty)$ and collect the reward $r_i(Y^{(i,i)})$.  We see that the reward of the $i$-th arm depends only on $\bs{\theta}_i$ and the reward of each arm also differs depending on the function $r_i$. Furthermore, when $s_{ij}$ is small,  the variance of the observation $Y^{(i,j)}$ is large. This mean that choosing the $i$-th arm tells us very little about $\bs{\theta}_j$. In other words, our information on the reward of the $j$-th arm does not improve much when $s_{ij}$ is small (and vice versa for large $s_{ij}$). 

Assume that the Shannon entropy (Remark \ref{Rem: Shannon Entropy}) is used as our regulariser.  The decision maker chooses using the (soft-) argmax of the index $\alpha^\lambda(m,d) = f(m,d) + \beta(1-\beta)^{-1} L^\lambda ( m ,d )$ where we can give an explicit expression (see Lemma \ref{lem: add info derive}) for $L^\lambda_i(m,d)$ by
\begin{equation}
\label{eq: L adding info}
L^\lambda_i(m,d) =\frac{1}{2 \lambda} \sum_{j = 1}^K \nu^\lambda_j(f(m,d))(1 - \nu^\lambda_j(f(m,d))) d_{jj}^2\Big(\frac{s_{ij}}{1+ d_{jj}s_{ij}}\Big) \big( \sE_{m,d} [ r'_j(Y^{(j,j)})] \big)^2.
\end{equation}
The term $\beta(1-\beta)^{-1}L^\lambda_i(m,d) $ can be interpreted as a learning premium for choosing the $i$-th arm describing the reduction of uncertainty as discussed in Section \ref{sec:learning premium}.  $\beta(1-\beta)^{-1}$ describes the importance of the future in our preferences.  $L^\lambda_i(m,d) $ comes from summing the (learning) benefit of playing the $i$-th arm to the $j$-th arm:  $d_{jj}^2\big(\frac{s_{ij}}{1+ d_{jj}s_{ij}}\big)$ describes how much we can reduce uncertainty of the $j$-th arm\footnote{One can see from \eqref{eq: Gaussian mean propagation} that this quantity describes the reduction in the (uncertainty) parameter $D$.};  $d_{jj}$ represents the uncertainty, while $s_{ij}$ tells us how much information we would gain.  We also have the term $\big( \sE_{m,d} [ r'_j(Y^{(j,j)})] \big)^2$ which rescales the learning benefit due to how our parameters impact the reward function (in particular, the curvature of the reward).  Finally, the term $\nu^\lambda_j(f(m,d))(1 - \nu^\lambda_j(f(m,d)))$ describes how much we actually need to learn. In particular, when we have an arm $i^*$ such that $f_{i^*}(m,d) \gg f_{j}(m,d)$ for $j \neq i^*$, the $i^*$-th arm is much better than the others, and we probably do not need to learn further.  In this case,  we have $\nu^\lambda(f(m,d)) \approx e_{i^*}$ and $\nu^\lambda_j(f(m,d))(1 - \nu^\lambda_j(f(m,d))) \approx 0$ for all $j$.


\section{Numerical Experiments}
\label{sec: Numerical}
In this section, we run numerical experiments to illustrate the accuracy of the approximation and run a simulation to compare performance of the ARC algorithm to other algorithms. 
\subsection{Comparison to the optimal solution for $1\tfrac{1}{2}$ bandit}
\label{sec: compare to the optimal}
To illustrate that our approximation gives a reasonable answer, we compare our estimated value function and its corresponding control to the exact value function in a simple setting.

Suppose that our bandit has two arms. The first arm always gives the reward $Y \sim N(\theta,1)$; the second arm always gives reward $1$.  We observe the reward of the first arm only when the first arm is chosen.  Formulating this as a relaxed control \eqref{eq: infinite horizon value} with dynamic in \eqref{eq: Gaussian mean propagation} gives
$$V^\lambda_\infty(m,d) = \sup_{\varphi \in \Psi} \sE_{m,d} \bigg[ \sum_{t=0}^\infty \beta^t \Big( \big(U^\varphi_{1,t+1}M^\varphi_t + U^\varphi_{2,t+1} \big) + \lambda \cH(U^\varphi_{t+1}) \Big)\bigg] \q ; \q \cH(u) = -\sum_{i=1}^2 u_i \ln u_i$$
with $\Theta = \sR$ and $\cD = [0, 1]$. The transition \eqref{eq: transition map} of the problem is given by
\begin{equation*}
\Phi(m,d, i,\xi) := 
\begin{cases}
(m,d) + \big(
d(1+d)^{-1/2} z(\xi),  - d^2(1+d)^{-1}
\big) & ; \; i =1 \\
(m,d) & ; \; i =2
\end{cases} \q \text{where} \;z(\bs{\xi}_t) \sim_{IID} N(0,1).
\end{equation*}
We solve the above problem explicitly using Monte Carlo simulation. In particular, we start our iteration with $\tilde{V}^\lambda_0(m,d) = 0$ and iteratively compute on the grid $(m,d)$,
$$\tilde{V}^\lambda_{n+1}(m,d) = \sup_{u \in \Delta^2} \Bigg\{u_1\Big(m + \frac{\beta}{N}\sum_{i=1}^N\tilde{V}^\lambda_{n} \big( \Phi(m,d, 1,\bs{\xi}_{i,n}) \big)\Big)  + u_2 \Big( 1 + \beta \tilde{V}^\lambda_{n}(m,d) \Big) + \lambda \cH(u) \Bigg\}$$
where $N$ is the number of Monte Carlo simulations and $(\bs{\xi}_{i,n})$ are such that $z(\bs{\xi}_{i,n}) \sim_{IID} N(0,1)$. We then interpolate $\tilde{V}^\lambda_{n+1}$ and repeat the procedure until $\tilde{V}^\lambda_n$ converges to  $\tilde{V}^\lambda$.  The corresponding optimal (feedback) probability to choose the first arm is given by
$$\tilde{p}^{\lambda,*}(m,d) \approx\frac{\exp\Big( \tfrac{1}{\lambda}\Big(m + \beta\big( \frac{1}{N}\sum_{i=1}^N\tilde{V}^\lambda( \Phi(m,d, 1,\bs{\xi}_{i}) \big) - \tilde{V}^{\bs{\lambda}}(m,d) \big) \Big)\Big)}{\exp\Big( \tfrac{1}{\lambda}\Big(m + \beta\big( \frac{1}{N}\sum_{i=1}^N\tilde{V}^\lambda( \Phi(m,d, 1,\bs{\xi}_{i}) \big) - \tilde{V}^{\bs{\lambda}}(m,d) \big) \Big) + \exp\big(\tfrac{1}{\lambda} \big)},   $$
where $z(\bs{\xi}_{i}) \sim_{IID} N(0,1)$. On the other hand, our ARC approximation gives 
{\small
$$V^{\lambda, ARC}_\infty(m,d) = (1-\beta)^{-1} \lambda \log \Big( \exp\Big(\frac{ \alpha_1^{{\lambda}}(m,d)}{\lambda} \Big) + \exp\Big(\frac{1}{\lambda} \Big)\Big), \q  p^{\lambda,ARC}(m,d) = \frac{\exp\big(\frac{ \alpha_1^{{\lambda}}(m,d)}{\lambda} \big)}{\exp\big(\frac{ \alpha_1^{{\lambda}}(m,d)}{\lambda} \big) +  \exp\big(\frac{1}{\lambda} \big)}, $$
}
where $\alpha_1^{\lambda}(m,d) = m + \tfrac{1}{2\lambda} \beta(1-\beta)^{-1} \nu_1^\lambda(m)(1-\nu_1^\lambda(m)) d^2(1+d)^{-1}$ and $\nu_1^\lambda(m) = \frac{\exp(m/\lambda)}{\exp(m/\lambda) + \exp(1/\lambda)}$.

We now compare $\tilde{V}^\lambda, \tilde{p}^{\lambda,*}$ with $V^{\lambda, ARC}_\infty, p^{\lambda,ARC}$ when $\lambda = 0.1$ and $\beta = 0.99$. In the numerical experiment, we use $N = 1000$ and consider $m \in [0,2]$ and $d \in [1/100, 1/20]$. We see in Figure \ref{fig:value_plot} (which shows the value functions) and Figure \ref{fig:prob_plot} (which shows the probability of choosing the risky arm) that the ARC approximation gives a close estimate of the regularised problem.

\begin{figure}[h]
	\centering
	\includegraphics[clip, trim=0cm 0.1cm 4.5cm 0.5cm, width=0.8\linewidth]{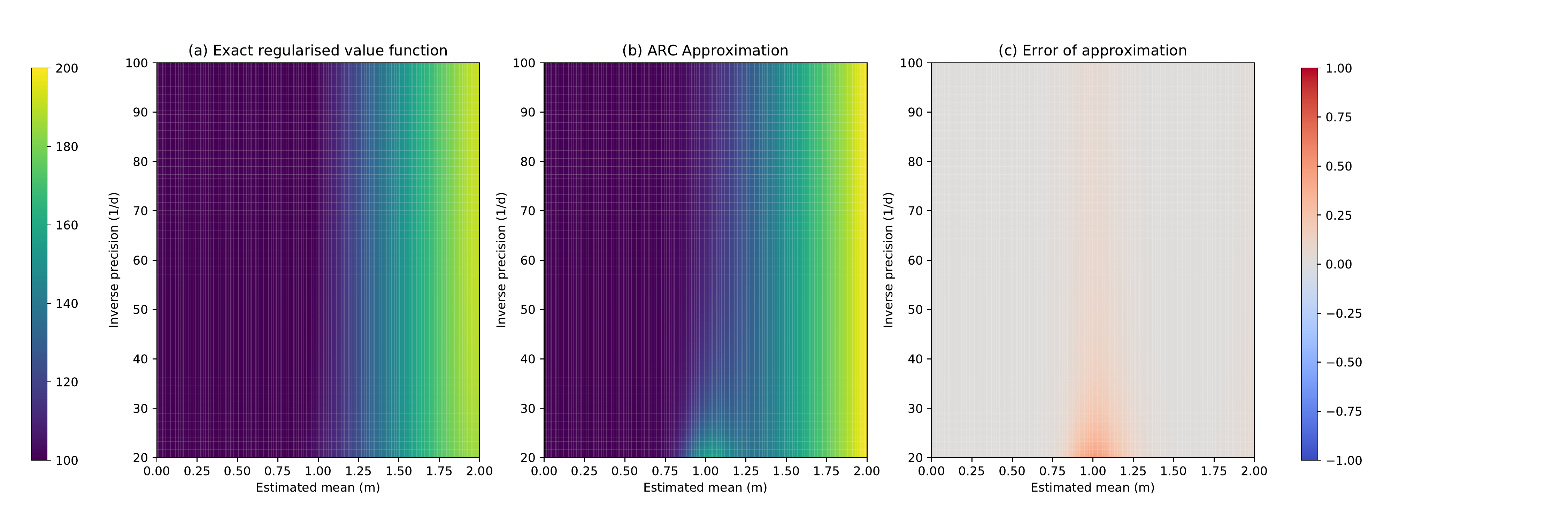}
	\caption{\small (a) $\tilde{V}^\lambda \qq$ (b) $V^{\lambda, ARC}_\infty \qq$   (c) $(V^{\lambda, ARC}_\infty - \tilde{V}^\lambda)/\tilde{V}^\lambda$}
	\label{fig:value_plot}
\end{figure}
\begin{figure}[h]
	\centering
	\includegraphics[clip, trim=0cm 0.1cm 4.5cm 0.5cm, width=0.8\linewidth]{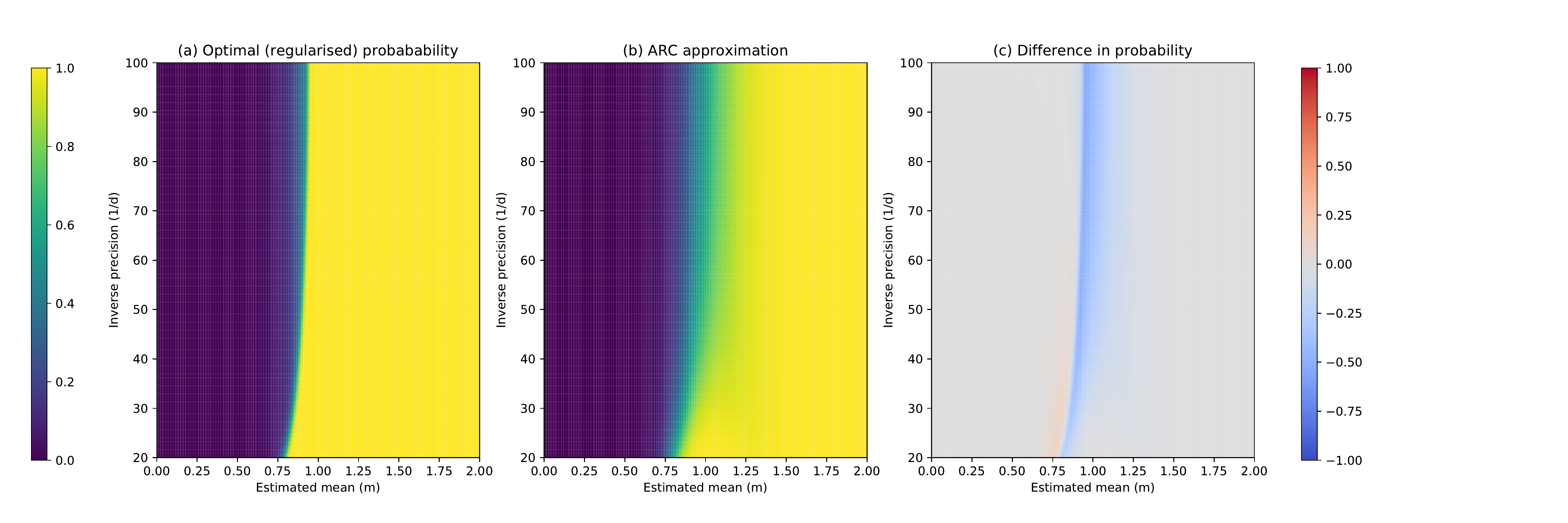}
	\caption{(a) $\tilde{p}^{\lambda,*} \qq$ (b) $p^{\lambda, ARC}_\infty \qq$   (c) $p^{\lambda, ARC}_\infty - \tilde{p}^{\lambda,*}$}
	\label{fig:prob_plot}
\end{figure}
\subsection{Simulation results}
A common performance measure which is used in the multi-armed bandit literature \cite{IDS, GLM_bandit, adaptive_learning_survey, Bayes_UCB, IDS_HETEROSCEDASTIC_NOISE} is the regret
$$R(A, T, \bs{\theta}) = \sum_{t=1}^{T}\Big( \max_{i \in [K]} \mathbb{E}\big[R^{(i)}(Y^{(i)}) \big| \bs \theta \big] - \mathbb{E}\big[R^{(A_t)}(Y^{(A_t)}) \big| \bs \theta \big]\Big).$$

In this section, we will compare the regret of the ARC algorithm with other approaches described in Section \ref{sec: bandit review} under $3$ environments; classical bandit, bandit with an informative arm and linear bandit.  In each of these environments, we consider decisions on a $50$-armed bandit with horizon $T = 2 \times 10^3$ steps over $10^3$ simulations.  In the $n$-th simulation, we sample the true parameter $\bs{\theta}^{(n)} \sim N(\bm{1}_{50},  I_{50})$ and then simulate interaction between each of the algorithms and the environment with the  parameter $\bs{\theta}^{(n)}$ starting from an initial belief $(m_0, d_0) = (0,10^3 \times I_{50})$ corresponding to a non-informative prior.  We then use our simulated output to demonstrate means, medians, 0.75 quantiles and 0.90 quantiles of the map $t \mapsto  R(A, t, \bs{\theta})$ for each of our considered algorithms.

\paragraph{Simulation Environment:} We consider the following environment for our simulation. Let $\bs{\theta}$ be an unknown parameter taking values in $\sR^{50}$. 
\begin{itemize}
\item \textit{Classical bandit.} When choosing the $i$-th arm, we observe and receive the reward sampled from the distribution $N(\bs{\theta}_i,  5)$.
\item \textit{Bandit with an informative arm.}  When choosing the $i$-th arm with $i \neq 1$, we observe and receive the reward sampled from the distribution $N(\bs{\theta}_i,  5)$. When the $1$-st arm is chosen,  we receive a reward sampled from $N(\bs{\theta}_1-1,  5)$ and in addition, we observe a sample from $N(\bs{\theta},  5 \times I_{50})$.  In particular, playing the first arm allows us to observe the rewards of other arms without choosing them, but this arm yields a smaller reward than others.
\item \textit{Linear bandit.} When choosing the $i$-th arm, we observe and receive a reward sampled from the distribution $N(b_i^\top \bs{\theta},  5)$ where $b_i = e_i + e_{i+1}$ for $i \neq 50$ and $b_{50} = e_1 + e_{50}$.
\end{itemize}

\paragraph{Hyper-parameters of bandit algorithms:} We will consider the KG and IDS by introducing $100$ Monte-Carlo samples to evaluate the required expectations.  We will set the parameter $\beta = 0.9995$ for KG and ARC.  The function $\bs{\lambda}(m,d)$ considered in the BE and ARC will be given in the form $\bs{\lambda}(m,d) = \rho \| d \|_{op}$ where $\|\cdot\|_{op}$ is the matrix operator norm and use Shannon entropy (Remark \ref{Rem: Shannon Entropy}) to regularise the ARC algorithm.  Unmentioned hyper-parameters will appear as a description of the algorithm in the regret plot. 

\paragraph{ARC index strategy:} For numerical efficiency, we introduce an index strategy inspired by the ARC algorithm. In contrast to the ARC, instead of making a decision based on the probability simplex $\nu^{\bs{\lambda}(m,d)}\big(\alpha^{\bs{\lambda}(m,d)}(m,d)\big)$ with $\alpha^{\lambda}(m,d)$ given in \eqref{eq: alpha},  we simply choose the arm $i$ which maximises $\alpha_i^{\bs{\lambda}}(m,d)$. 

\begin{figure}[h]
	\centering
	\includegraphics[clip, trim=7cm 0.5cm 7cm 0.5cm, width=1\linewidth]{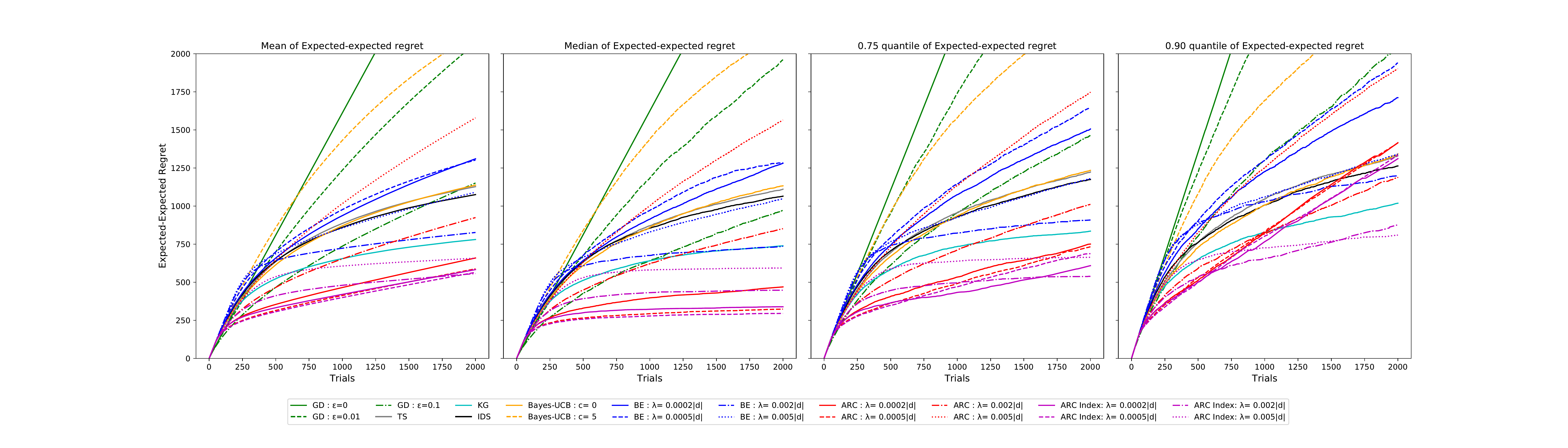}
	\caption{Regret for the classical bandit}
	\label{fig:classic}
\end{figure}

\begin{figure}[h]
	\centering
	\includegraphics[clip, trim=7cm 0.5cm 7cm 0.5cm, width=1\linewidth]{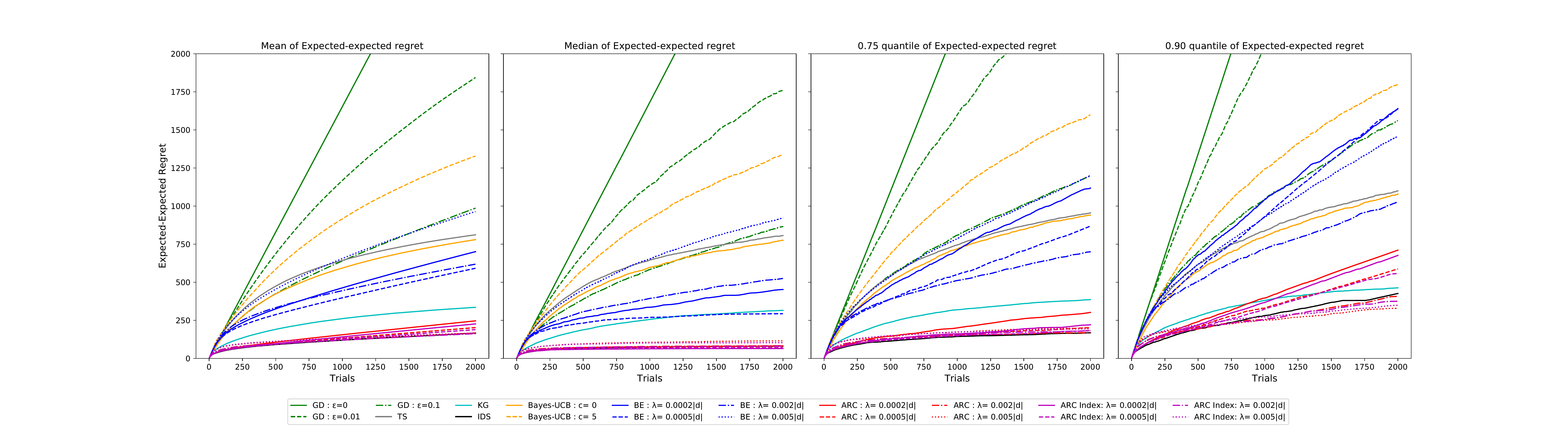}
	\caption{Regret for the bandit with an informative arm}
	\label{fig:info}
\end{figure}

\begin{figure}[h]
	\centering
	\includegraphics[clip, trim=7cm 0.5cm 7cm 0.5cm, width=1\linewidth]{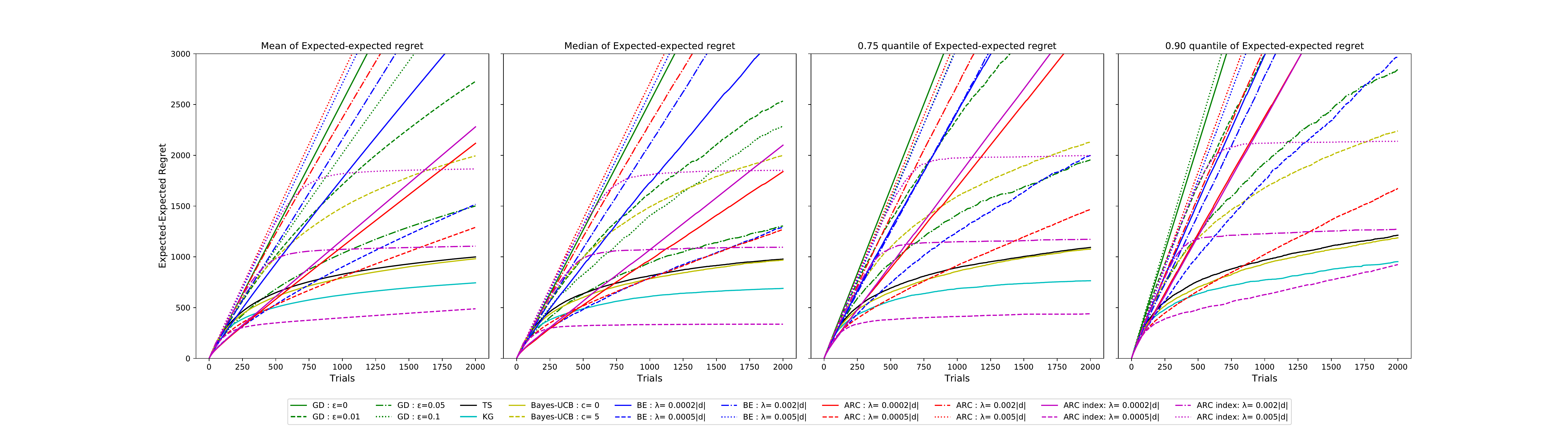}
	\caption{Regret for the linear bandit (IDS excluded due to computational cost)}
	\label{fig:linear}
\end{figure}
\paragraph{Discussion of numerical simulations:} We see in Figures \ref{fig:classic},  \ref{fig:info} and  \ref{fig:linear} that both ARC and ARC index strategy with appropriate hyper-parameter perform very well compared to other algorithms.  We see that the ARC approaches perform significantly better than other approaches in the setting for the bandit with an informative arm.  This performance is as good as IDS but requires significantly lower computational cost, since every term can be evaluated explicitly.  In fact, the computational cost of the IDS proved too expensive to demonstrate for the linear bandit with $50$ arms and was therefore omitted.

The ARC algorithm derived in this paper performs well with appropriate hyper-parameter choices.  It is also worth pointing out one  obstacle found in the derived ARC algorithm when running numerical simulations with many arms available.  We see that taking $\bs{\lambda}(m,d) = \rho \| d \|$ may not allow $\|d\|$ to decay sufficiently fast (even though Theorem \ref{Thm: Complete Learning for bounded} ensures that this ultimately converges to $0$). In particular,  with many arms,  we should be able to identify some arms which are significantly worse than the others. Such arms will be rarely played, which leaves $\|d\|$ large for a long time.  We found in the numerical simulation that when following the ARC procedure, the agent identifies the reward of the very best few arms correctly (i.e. we obtain close estimates to the rewards of a few best arms).  Unfortunately, since $\|d\|$ does not decay sufficiently fast, this leads the ARC algorithm to randomly choose among some small number of options, even if it identifies the best arm correctly.  This explains why we observe linear trends in the regret plot for the simple ARC but with a low gradient. 

To overcome this effect, we considered the ARC index strategy, which does not require $\|d\|$ to decay to zero to terminate decision to a single (best) arm.  Here, we see in Figure \ref{fig:classic},  \ref{fig:info} and  \ref{fig:linear} that  the gradient of the regret of the ARC index converges to zero, that is, we eventually identify the best arm.  In general, one may also choose the function $\bs{\lambda}(m,d)$ depending on $m$ to truncate our consideration to the best arm. 

\section{Proof of the main results}
\label{sec: Prove ARC}
\subsection{Convex analysis and smooth max approximator}
In \eqref{eq: smax} and \eqref{eq: nu}, we briefly introduce $\smaxld$ and $\nu^\lambda$ as a smooth version of the maximum function and its derivative, obtained via the convex conjugation of the regularisation function $\cH$.  One well-known choice of $\cH$ which gives an analytical expression is the Shannon entropy. We can also consider other choices of $\mathcal{H}$ by constructing the smooth max approximator  $\smaxld$ explicitly\footnote{Examples of explicit constructions can be found in  Zhang and Reisinger \cite[Remark 3.1]{John_relaxed_control}.}. 

We observe that $\smaxld$ in \eqref{eq: smax} can be expressed as $\smaxld (a) = \lambda S(a/\lambda)$ where
\begin{equation}
\label{eq: robust rep}
S(a) = \sup_{u \in \Delta^K} \Big(\sum_{i=1}^K u_i a_i + \mathcal{H}(u)\Big).
\end{equation}
In particular, $-\mathcal{H}$ is the convex conjugate of $S$ (see e.g. Rockafellar \cite{Convex_Analysis_book}).  In fact, \eqref{eq: robust rep} is also known as a `nonlinear expectation' \footnote{Nonlinear Expectations (or equivalently  `risk measures') are a classical tool in mathematical finance to study decision making under uncertainty.} defined on a finite space (see Coquet et al. \cite{represent_nonlinear}).

\begin{Definition}
	We say a function $S: \mathbb{R}^K \to \mathbb{R}$ is a \textit{convex nonlinear expectation} if it satisfies:
	\begin{enumerate}[(i)]
		\item \textbf{Monotonicity}: If $a \leq b$, then $S(a) \leq S(b)$;
		\item \textbf{Translation Equivariance}: For all $c \in \mathbb{R}$, $S(a + c \bm{1}_K) = S(a) + c$;
		\item \textbf{Convexity}: For any $\kappa \in [0,1]$, $S(\kappa a + (1-\kappa) b) \leq \kappa S(a) + (1-\kappa) S(b)$;
	\end{enumerate}
	where the inequalities are interpreted component-wise.
\end{Definition}

The following theorem shows that using $\smaxld$ as a smooth max approximator is equivalent to having $\cH$ bounded.  This will allow us to quantify the difference between a non-regularised \eqref{Eq: no entropy reward} and regularised control problem \eqref{eq: infinite horizon value} by $\cO (\lambda)$. The proof is given in Appendix \ref{sec:Proofs of relevant results}.
\begin{Theorem}
	\label{thm: equivalence to boundedness}
	Let $S$ be a convex nonlinear expectation. The following are equivalent.
	\begin{enumerate}[(i)]
		\item There exists $N \in \mathbb{R}$ such that $S(a) +N \geq \max_i a_i$ for all $a \in \mathbb{R}^K$.
		\item There exists $N \in \mathbb{R}$ such that $\mathcal{A}_S :=\{a  \in \mathbb{R}^K: S(a) \leq 0\} \subseteq (-\infty,N]^K$.
		\item There exists a bounded function $\mathcal{H}: \Delta^K \to \mathbb{R}$ such that \eqref{eq: robust rep} holds.
		\item For $\smaxld (a) = \lambda S(a/\lambda)$, we have $\sup_{a \in \mathbb{R}} |\smaxld(a) - \max_i a_i| \to 0$ as $\lambda \downarrow 0$.
	\end{enumerate}
\end{Theorem}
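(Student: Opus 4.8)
The plan is to route all four conditions through the convex-dual (robust) representation of $S$ and then reduce each to a single scalar fact: whether the representing entropy is bounded below. Since $S:\mathbb{R}^K\to\mathbb{R}$ is finite and convex it is continuous, hence closed, so by Fenchel--Moreau $S=S^{**}$, that is $S(a)=\sup_u\big(\sum_i u_i a_i - S^*(u)\big)$ with $S^*$ the convex conjugate. Monotonicity forces $S^*(u)=+\infty$ whenever some $u_j<0$ (take $a=-t e_j$, $t\to\infty$, and use $S(-te_j)\le S(0)$), and translation equivariance forces $S^*(u)=+\infty$ whenever $\sum_i u_i\neq 1$ (replace $a$ by $a+c\bm{1}_K$ and send $c\to\pm\infty$); hence $\operatorname{dom}S^*\subseteq\Delta^K$. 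Writing $\mathcal{H}_0:=-S^*|_{\Delta^K}$ yields the canonical representation $S(a)=\sup_{u\in\Delta^K}\big(\sum_i u_i a_i+\mathcal{H}_0(u)\big)$. I would immediately record two elementary bounds: $\mathcal{H}_0(u)=\inf_a\big(S(a)-\sum_i u_i a_i\big)\le S(0)$, so $\mathcal{H}_0$ is \emph{always} bounded above; and, from $a\le(\max_i a_i)\bm{1}_K$ with monotonicity and translation equivariance, $S(a)\le\max_i a_i+S(0)$ for every $a$. Thus the only open question in all four statements is boundedness \emph{below}.

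Next I would dispatch (i)$\Leftrightarrow$(ii) and (i)$\Leftrightarrow$(iv) without any duality. For (i)$\Rightarrow$(ii): if $S(a)+N\ge\max_i a_i$ and $S(a)\le 0$ then $\max_i a_i\le N$, so $\mathcal{A}_S\subseteq(-\infty,N]^K$. For (ii)$\Rightarrow$(i): translation equivariance gives $S\big(a-S(a)\bm{1}_K\big)=0$, so $a-S(a)\bm{1}_K\in\mathcal{A}_S\subseteq(-\infty,N]^K$, whence $\max_i a_i-S(a)\le N$. For (i)$\Leftrightarrow$(iv) I would combine the two-sided bound with scaling: (i) is exactly the lower bound $S(a)\ge\max_i a_i-N$, which together with the universal upper bound $S(a)\le\max_i a_i+S(0)$ and $\smaxld(a)=\lambda S(a/\lambda)$ gives $\max_i a_i-\lambda N\le\smaxld(a)\le\max_i a_i+\lambda S(0)$, so $\sup_a|\smaxld(a)-\max_i a_i|\le\lambda\max\{N,S(0)\}\to 0$; conversely, if (iv) holds, pick $\lambda_0$ with $\sup_a|\smaxld(a)-\max_i a_i|\le 1$, set $b=a/\lambda_0$, and read off $\max_i b_i\le S(b)+\lambda_0^{-1}$, which is (i).

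Finally (iii). The direction (iii)$\Rightarrow$(i) is immediate: a representation with $\mathcal{H}\ge -N$ gives $S(a)\ge\sup_{u\in\Delta^K}\sum_i u_i a_i-N=\max_i a_i-N$. For (i)$\Rightarrow$(iii) I would show the canonical $\mathcal{H}_0$ is itself bounded: using $\max_i a_i\ge\sum_i u_i a_i$ for $u\in\Delta^K$ and the lower bound in (i), $\mathcal{H}_0(u)=\inf_a\big(S(a)-\sum_i u_i a_i\big)\ge\inf_a\big(\max_i a_i-\sum_i u_i a_i\big)-N=-N$, and with $\mathcal{H}_0\le S(0)$ this makes $\mathcal{H}_0$ bounded, so $\mathcal{H}=\mathcal{H}_0$ works. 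I expect the genuinely delicate point to be the representation step rather than the equivalences: one must justify closedness/finiteness of $S$ so that Fenchel--Moreau applies, verify that restricting $\operatorname{dom}S^*$ to $\Delta^K$ loses nothing, and observe that \emph{any} bounded representing $\mathcal{H}$ is dominated by $\mathcal{H}_0$ (since $S=\phi^*$ for $\phi=-\mathcal{H}$ depends on $\phi$ only through its closed convex hull, whence $\mathcal{H}\le\mathcal{H}_0$), so that ``there exists a bounded $\mathcal{H}$'' is genuinely equivalent to boundedness of $\mathcal{H}_0$. Everything else is bookkeeping around the two scalar bounds from the first step.
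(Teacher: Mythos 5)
Your proposal is correct, but it takes a genuinely different route from the paper. The paper proves the single cycle (i)$\Rightarrow$(ii)$\Rightarrow$(iii)$\Rightarrow$(iv)$\Rightarrow$(i): for (ii)$\Rightarrow$(iii) it quotes the robust representation theorem (F\"ollmer--Schied, Theorem \ref{thm: robust rep}), whose maximal penalty $\cH_{\max}(u)=-\sup_{a\in\cA_S}\sum_i u_ia_i$ is built from the acceptance set, and bounds it below directly from $\cA_S\subseteq(-\infty,N]^K$; (iii)$\Rightarrow$(iv) uses the two-sided bound on $\cH$; and (iv)$\Rightarrow$(i) is the same rescaling argument you use. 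You instead make (i) the hub of three direct equivalences and construct the representation yourself via Fenchel--Moreau, checking that monotonicity and translation equivariance force $\dom S^*\subseteq\Delta^K$ and taking $\cH_0=-S^*$. Several of your individual steps are simpler than the paper's: your (i)$\Rightarrow$(ii) is a one-line implication (if $S(a)\le 0$ then $\max_i a_i\le N$), versus the paper's construction with the points $(N+\epsilon)e_i+\sum_{j\ne i}r_je_j$; your direct (ii)$\Rightarrow$(i), via $a-S(a)\bm{1}_K\in\cA_S$, does not appear in the paper at all; and your (i)$\Rightarrow$(iv) substitutes the universal estimate $S(a)\le\max_i a_i+S(0)$ for the entropy bound. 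What you pay is having to set up biconjugation (finiteness and continuity of $S$, identification of $\dom S^*$), which the paper avoids entirely by citing the representation theorem; also, your closing remark on maximality of $\cH_0$ (that $\cH\le\cH_0$ for any representing $\cH$) is not actually needed, since your (iii)$\Rightarrow$(i) works for an arbitrary bounded representing $\cH$. The underlying duality content is ultimately the same --- for a convex nonlinear expectation the acceptance-set penalty coincides with the conjugate --- so the difference is one of organization and self-containedness rather than substance: your version is modular and self-contained, while the paper's cycle is more economical, outsourcing all convex duality to a single quoted theorem.
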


We now introduce a smooth entropy regulariser as the convex conjugate of a smooth max approximator.
 \begin{Definition}
	\label{def: smooth entropy}
	We say a function $S: \mathbb{R}^K \to \mathbb{R}$ is a \textit{smooth max approximator} if it is a $3$-times differentiable convex nonlinear expectation with uniformly bounded derivatives such that Theorem \ref{thm: equivalence to boundedness} holds. We say a bounded function $\mathcal{H} : \Delta^K \to \mathbb{R}$ is a \textit{smooth entropy} if $-\mathcal{H}$ is a convex conjugate of some smooth max approximator\footnote{One can check that the Shannon Entropy (Remark \ref{Rem: Shannon Entropy}) is a smooth entropy.}.
	
	For a smooth max approximator $S$, we write $\smaxld(a) := \lambda S(a/\lambda)$,  $ \nu^\lambda(a):= \del_y S \big|_{y = a/\lambda}  = \del_a \smaxld(a)$ and $\eta^\lambda(a) := \del^2_y S\big|_{y = a/\lambda} = \lambda \del^2_a \smaxld(a)$.
\end{Definition}

\begin{Remark}
\label{nu and eta remark}
If $S$ is a smooth max approximator, then $\nu^\lambda$ and $\eta^\lambda$ are uniformly bounded. Moreover, it follows from Fenchel's inequality that 
	$\nu^\lambda(a) = \argmax_{u \in \Delta^K}  \big(\sum_{i=1}^K u_i a_i + \lambda\mathcal{H}(u)\big).$
	In particular,  $\nu^\lambda$ can be interpreted as a smooth version of the argmax, as in \eqref{eq: nu}.
\end{Remark}

\subsection{Analysis of the regularised control problem over finite horizon }

The objective of this section is to approximate the finite horizon value function as a sum of the (smooth) maximum of the incremental rewards.  

\begin{Definition}
\label{def: finite horizon value}
Let $\mathcal{H}: \Delta^K \to \mathbb{R}$ be a smooth entropy (Definition \ref{def: smooth entropy})  and $\lambda > 0$. Define
\begin{equation}
	\label{eq: finite horizon value}
	\left.
	\begin{aligned}
	V^\lambda_T(m,d) &:= \sup_{\varphi \in \Psi} V^{\lambda,\varphi}_T(m,d), \qquad \text{where} \\
	V^{\lambda,\varphi}_T(m,d) &:= \mathbb{E}_{m,d}\Bigg[\sum_{t=0}^{T-1}\beta^{t} \bigg( \Big(\sum_{i=1}^K f_{i}\big(M^{\varphi}_t, D^{\varphi}_t \big) U^\varphi_{i, t+1}\Big) + \lambda \mathcal{H}(U^\varphi_{t+1}) \bigg)\Bigg].
\end{aligned}	
\right\}
	\end{equation}
	\end{Definition}
	
We will show that
	$
 V^\lambda_{T}(m,d)  \approx \sum_{t=0}^{T-1} \beta^{t}\big(\smaxld \circ \alpha_{T-t}^\lambda\big)(m,d)$, with an approximate optimal policy $\varphi^{\lambda,  T}(t,m,d) := \nu^\lambda \big( \alpha^\lambda_{T-t+1}(m,d) \big)$. Here, the incremental reward with $t$-steps to go is
\begin{equation}
    \label{eq: alpha_t}
    \alpha_t^\lambda(m,d) := f(m,d) + L^\lambda(m,d) \bigg( \sum_{s=1}^{t-1}\beta^s \bigg),
\end{equation}
where $L^\lambda(m,d)$ is given in \eqref{eq: L expression}.

The idea behind the analysis is to consider an asymptotic expansion as $\|d\| \to 0$.  Due to our learning structure (H.\ref{Assump: Dynamic Asumption}), the change in the underlying state $(m,d)$ are (in expectation) of order $\cO(\|d\| ^2)$.  Hence, the global Lipschitz property of $f$  implies that the instantaneous reward changes with $\cO(\|d\| ^2)$.  Hence, we can use Taylor's theorem to obtain an asymptotic expansion in $\|d\| $ and keep the terms up to order $\cO(\|d\| ^2)$. 

We first show that the perturbation error in the learning term $L^\lambda$ is of order $\cO(\|d\| ^3)$ and can be ignored in our approximation. The proofs of the following two lemmas are in Appendix \ref{sec:Proofs of relevant results}.
\begin{Lemma}
\label{lem: error of L}
Suppose that (H.\ref{Assump: Instantaneous cost}) and (H.\ref{Assump: Dynamic Asumption}) hold, and  let $S$ be a smooth max approximator (Definition \ref{def: smooth entropy}).  There exists a constant $C > 0$  such that the function $(\lambda,m,d) \mapsto L^\lambda(m,d)$ given in \eqref{eq: L expression} satisfies: for any $\lambda > 0$, $i \in [K]$,  $(m,d) \in \Theta \times \cD$ and $t \in \sN$, 
$$\sE \big|L^\lambda \big( \Phi(m,d, i, \bs{\xi}_t) \big) - L^\lambda(m,d) \big|\leq C \|d\|^3 (1+ \lambda^{-2} ).$$

\end{Lemma}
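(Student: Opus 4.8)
The plan is to exploit the product structure of $L^\lambda$. Each of its three summands in \eqref{eq: L expression} has the form $\langle G^\lambda(m,d); \psi_i(m,d)\rangle$, where $G^\lambda \in \{\mathcal{B}^\lambda, \mathcal{M}^\lambda, \tfrac12\Sigma^\lambda\}$ is built from derivatives of $f$ weighted by $\nu^\lambda$ or $\eta^\lambda$ evaluated at $f(m,d)$, and $\psi_i \in \{b_i, \mu_i, \sigma_i\sigma_i^\top\}$ is a coefficient of the dynamics. First I would record the elementary a priori bounds: since $S$ is a smooth max approximator, $\nu^\lambda$ and $\eta^\lambda$ are bounded and Lipschitz with constants $\cO(\lambda^{-1})$ (because $\del_a\nu^\lambda = \lambda^{-1}\eta^\lambda$ and $\del_a\eta^\lambda = \lambda^{-1}\del^3_y S(\cdot/\lambda)$); by (H.\ref{Assump: Instantaneous cost}) the maps $f, \del_m f, \del_d f, \del_m^2 f$ are globally bounded and globally Lipschitz; and by (H.\ref{Assump: Dynamic Asumption})$(v)$ together with $(i)$ the coefficient satisfies $|\psi_i(m,d)| \leq C\|d\|^2$ and, crucially, $|\psi_i(\Phi(m,d,i,\xi))| \leq C\|d\|^2$ uniformly in $\xi$ (since $\|d+b_i(m,d)\| \leq \|d\|$ keeps the $d$-argument inside the ball of radius $\|d\|$). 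This also yields the coarse bound $|G^\lambda(m,d)| \leq C(1+\lambda^{-1})$, valid for all three choices.

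For each summand I would then telescope
\[
\langle G^\lambda(\Phi); \psi_i(\Phi)\rangle - \langle G^\lambda(m,d); \psi_i(m,d)\rangle = \langle G^\lambda(m,d); \psi_i(\Phi) - \psi_i(m,d)\rangle + \langle G^\lambda(\Phi) - G^\lambda(m,d); \psi_i(\Phi)\rangle,
\]
writing $\Phi := \Phi(m,d,i,\bs{\xi}_t)$ and noting $\Phi - (m,d) = (\mu_i + \sigma_i z, b_i)$ with $|\mu_i|, |b_i| \leq C\|d\|^2$ and $|\sigma_i z| \leq C\|d\|\,|z|$. For the first (Part A) bracket, a fundamental-theorem-of-calculus estimate along the segment joining $(m,d)$ to $\Phi$ --- which stays inside $\{\|d'\|\leq\|d\|\}$ by (H.\ref{Assump: Dynamic Asumption})$(i)$ --- combined with $|\del_m\psi_i| \leq C\|d\|^2$, $|\del_d\psi_i|\leq C\|d\|$ gives $\sE|\psi_i(\Phi) - \psi_i(m,d)| \leq C\|d\|^3$ after using $\sE|z| \leq C$; multiplying by the deterministic factor $|G^\lambda(m,d)|$ yields an $\cO(\|d\|^3(1+\lambda^{-1}))$ contribution. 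For the second (Part B) bracket I would split $G^\lambda(\Phi) - G^\lambda(m,d)$ into the increment of the $\nu^\lambda/\eta^\lambda$-factor and that of the $f$-derivative factor; the former is controlled by the $\cO(\lambda^{-1})$ Lipschitz constants composed with $|f(\Phi)-f(m,d)| \leq C(\|d\|^2 + \|d\|\,|z|)$, the latter directly by the Lipschitz property of the derivatives of $f$. Each such increment is $\cO(\|d\|(1 + \lambda^{-1}))$ in expectation, which once multiplied by the deterministically bounded $|\psi_i(\Phi)| \leq C\|d\|^2$ gives $\cO(\|d\|^3(1+\lambda^{-1}))$ --- except for the $\eta^\lambda$ term inside $\Sigma^\lambda$, which already carries a $\lambda^{-1}$ prefactor, so differentiating its $\eta^\lambda$-factor produces the worst case $\cO(\|d\|^3\lambda^{-2})$. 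Summing the three summands then gives the claimed $C\|d\|^3(1+\lambda^{-2})$.

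The main obstacle is bookkeeping the powers of $\lambda$ rather than the powers of $\|d\|$: the $\|d\|^3$ scaling is robust (it comes from pairing the $\cO(\|d\|)$ stochastic displacement $\sigma_i z$ against a coefficient of size $\cO(\|d\|^2)$, the third power being recovered via $\sE|z|\leq C$), but one must carefully verify that the $\lambda^{-2}$ arises only once, from the single term in $\Sigma^\lambda$ that both carries the explicit $\lambda^{-1}$ and involves $\eta^\lambda$ whose increment contributes a further $\lambda^{-1}$. A secondary point requiring care is that the $m$-displacement $\sigma_i z$ is not uniformly small, so the Lipschitz and FTC estimates must be taken in $L^1(\bs{\xi}_t)$ using the moment bounds (H.\ref{Assump: Dynamic Asumption})$(iii)$--$(iv)$ rather than pathwise, and one must check that the integration segment stays inside the ball where the coefficient bounds (H.\ref{Assump: Dynamic Asumption})$(v)$ apply, which is exactly what (H.\ref{Assump: Dynamic Asumption})$(i)$ guarantees.
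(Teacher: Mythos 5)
Your proposal is correct and follows essentially the same route as the paper's proof: both bound the increment of each product $\langle G^\lambda;\psi_i\rangle$ by a mean-value/Lipschitz estimate, using that $\cB^\lambda,\cM^\lambda$ are $\cO(1+\lambda^{-1})$-Lipschitz and bounded, that $\Sigma^\lambda$ is $\cO(1+\lambda^{-2})$-Lipschitz with $|\Sigma^\lambda|\leq C(1+\lambda^{-1})$ (the sole source of the $\lambda^{-2}$, exactly as you identify), that the coefficient increments obey $|\psi_i(\Phi)-\psi_i|\leq C(\|d\|^2|\Delta m_i|+\|d\|\,\|\Delta d_i\|)$ via (H.2)$(i)$, $(v)$ and convexity of the norm ball, and finally the moment bounds $\sE|\Delta m_i|\leq C\|d\|$, $\|\Delta d_i\|\leq C\|d\|^2$ from (H.2)$(iii)$--$(iv)$. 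The only cosmetic difference is that you make the two-term telescoping explicit, whereas the paper substitutes the same factor-and-increment bounds directly into \eqref{eq: L expression}.
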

Now, we consider the second order approximation of the smooth maximum $\smaxld$ over the incremental reward $\alpha$.  We show that $L^\lambda$ is the second order approximation (in expectation) of the (smooth) maximum incremental reward.

\begin{Lemma}
\label{lem: one step return error}
Suppose that (H.\ref{Assump: Instantaneous cost}) and (H.\ref{Assump: Dynamic Asumption}) hold , and  let $S$ be a smooth max approximator (Definition \ref{def: smooth entropy}) and $\smaxld(a) := \lambda S(a/\lambda)$.  There exists a constant $C > 0$  such that for any $\lambda > 0$, $i \in [K]$,  $(m,d) \in \Theta \times \cD$ and $t,T \in \sN$,
\begin{align*}
&\Big| \sE  \big(\smaxld\circ \alpha_T^\lambda\big) \big( \Phi(m,d,  i, \bs{\xi}_t) \big) - \big(\smaxld\circ \alpha_T^\lambda\big)(m,d)- L_i^\lambda (m,d)\Big|   \leq C \|d\|^3\big(1 + \lambda^{-2} + \lambda^{-3} \|d\|  \big)
\end{align*}
where $\alpha^\lambda_t$ is defined in \eqref{eq: alpha_t}, $L^\lambda$ is defined in \eqref{eq: L expression}.
\end{Lemma}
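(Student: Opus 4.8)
The plan is to compute the one-step expected increment $\sE[(\smaxld\circ\alpha_T^\lambda)(\Phi(m,d,i,\bs{\xi}_t))] - (\smaxld\circ\alpha_T^\lambda)(m,d)$ by a third-order Taylor expansion and match the surviving terms with $L_i^\lambda(m,d)$. The central difficulty is regularity: a smooth max approximator $S$ is only three times differentiable, so $\alpha_T^\lambda = f + (\sum_{s=1}^{T-1}\beta^s)L^\lambda$ is \emph{not} smooth enough to expand directly, because the $\tfrac1\lambda\eta^\lambda(f)$ contribution to $\Sigma^\lambda$ (hence to $L^\lambda$) is only once differentiable in $m$. I would circumvent this with a \emph{freezing} device. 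Writing $c := (\sum_{s=1}^{T-1}\beta^s)L^\lambda(m,d)$, a constant in the expansion variable, and $H(m',d') := \smaxld(f(m',d')+c)$, I would telescope
\[
(\smaxld\circ\alpha_T^\lambda)(\Phi) - (\smaxld\circ\alpha_T^\lambda)(m,d) = \big[(\smaxld\circ\alpha_T^\lambda)(\Phi) - H(\Phi)\big] + \big[H(\Phi) - H(m,d)\big],
\]
noting $H(m,d) = (\smaxld\circ\alpha_T^\lambda)(m,d)$, where $\Phi = \Phi(m,d,i,\bs{\xi}_t) = (m+\Delta m,\, d+\Delta d)$ with $\Delta m = \mu_i + \sigma_i z$ and $\Delta d = b_i$.

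The first bracket only replaces $L^\lambda(\Phi)$ by $L^\lambda(m,d)$ inside $\smaxld$. Since $\del_a\smaxld = \nu^\lambda \in \Delta^K$, the map $\smaxld$ is $1$-Lipschitz uniformly in $\lambda$, so this bracket is at most $(\sum_s\beta^s)\,|L^\lambda(\Phi) - L^\lambda(m,d)|$; taking expectations and applying Lemma~\ref{lem: error of L} together with $\sum_{s=1}^{T-1}\beta^s \leq \beta(1-\beta)^{-1}$ controls it by $C\|d\|^3(1+\lambda^{-2})$, uniformly in $T$ and $t$. This is where Lemma~\ref{lem: error of L} earns its keep, and it is what lets me avoid ever differentiating $L^\lambda$ twice.

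For the second bracket, $H$ inherits the regularity of $f$ (the additive constant $c$ does not affect differentiability), so I would expand $H$ to third order and take expectations using $\sE[z]=0$, $\sE[zz^\top]=I_r$, $\sE|z|^3\leq C$ from (H.\ref{Assump: Dynamic Asumption}). The surviving first/second-order terms are $\langle\del_m H;\mu_i\rangle + \langle\del_d H;b_i\rangle + \tfrac12\langle\del_m^2 H;\sigma_i\sigma_i^\top\rangle$. By the chain rule, $\del_a\nu^\lambda = \tfrac1\lambda\eta^\lambda$, and the definitions in \eqref{eq: B, M, Sigma Expression}, these equal $\langle\mathcal{M}^\lambda;\mu_i\rangle + \langle\mathcal{B}^\lambda;b_i\rangle + \tfrac12\langle\Sigma^\lambda;\sigma_i\sigma_i^\top\rangle = L_i^\lambda$, except that $\nu^\lambda,\eta^\lambda$ are evaluated at $f+c$ rather than $f$. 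Since $c = \cO(\|d\|^2(1+\lambda^{-1}))$ by (H.\ref{Assump: Dynamic Asumption})(v) and $|\del_a\nu^\lambda|,|\del_a\eta^\lambda| = \cO(\lambda^{-1})$, replacing $f+c$ by $f$ costs $\cO(\|d\|^2(\lambda^{-1}+\lambda^{-2}))$ in $\nu^\lambda$ and, after the $\tfrac1\lambda$ in $\Sigma^\lambda$, $\cO(\|d\|^2(\lambda^{-2}+\lambda^{-3}))$ in the quadratic term; contracting against $\sigma_i\sigma_i^\top,\mu_i,b_i$ (all $\cO(\|d\|^2)$) gives errors absorbed into $C\|d\|^3(1+\lambda^{-2}+\lambda^{-3}\|d\|)$, using compactness of $\cD$ to write $\|d\|^4 = \|d\|\cdot\|d\|^3$ and $\lambda^{-1}\leq 1+\lambda^{-2}$.

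It remains to bound the leftover expansion terms. The mixed/higher-moment contributions $\tfrac12\langle\del_m^2 H;\mu_i\mu_i^\top\rangle$, $\langle\del_m\del_d H;\mu_i\otimes b_i\rangle$, $\tfrac12\langle\del_d^2 H;b_ib_i^\top\rangle$ carry two factors $\cO(\|d\|^2)$ against second derivatives of $H$ that are $\cO(1+\lambda^{-1})$, hence are $\cO(\|d\|^4(1+\lambda^{-1}))$ and again absorbed. The third-order remainder is controlled by $\sup|\del^3 H|\cdot\sE(|\Delta m|+|\Delta d|)^3$; since $\del_a^3\smaxld = \cO(\lambda^{-2})$ yields $|\del^3 H| = \cO(\lambda^{-2})$, and $\sE|\Delta m|^3 = \cO(\|d\|^3)$ (dominated by $\sigma_i z$ with $|\sigma_i| = \cO(\|d\|)$ and $\sE|z|^3\leq C$), this remainder is $\cO(\|d\|^3\lambda^{-2})$. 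Summing the two brackets gives the claim. I expect the genuine obstacle to be precisely the regularity bookkeeping — verifying that the freezing decomposition keeps every differentiation within the third order that $S$ and $f$ afford, and that each discarded term lands inside $C\|d\|^3(1+\lambda^{-2}+\lambda^{-3}\|d\|)$ rather than producing a stray $\lambda^{-1}$ or $\|d\|^2$; the uniformity in $T$ and $t$ is comparatively routine, via $\sum_{s=1}^{T-1}\beta^s\leq\beta(1-\beta)^{-1}$ and the $t$-independence of the moments in (H.\ref{Assump: Dynamic Asumption})(iii)--(iv).
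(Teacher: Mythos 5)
Your proof is correct and takes essentially the same approach as the paper's: the paper also freezes the learning term (working with $g = f + c$ where $c = \big(\sum_{s=1}^{T-1}\beta^s\big)L^\lambda(m,d)$), Taylor-expands $\smaxld\circ g$ to third order using the moment bounds of (H.2), shifts the evaluation point of $\nu^\lambda,\eta^\lambda$ from $f+c$ back to $f$ via the mean value theorem at cost $|c|(1+\lambda^{-2})\|d\|^2$, and controls the unfreezing error $\smaxld(\alpha_T^\lambda(\Phi)) - \smaxld(f(\Phi)+c)$ through the $\lambda$-uniform Lipschitz property of $\smaxld$ combined with Lemma~\ref{lem: error of L}. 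The differences are purely presentational (order of the telescoping brackets and where the error bookkeeping is done).
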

Since $\alpha^\lambda_{t}(m,d)$ describes an incremental reward with $t$-steps to go, we may approximate the optimal value function \eqref{eq: finite horizon value} by $\sum_{t=0}^{T-1} \beta^{t}\big(\smaxld \circ \alpha_{T-t}^\lambda\big)(m,d) $ with an estimate
optimal strategy $\varphi^{\lambda,  T}(t,m,d) := \nu^\lambda \big( \alpha^\lambda_{T-t}(m,d) \big)$.

\begin{Theorem}
\label{Thm: Expression of V_T}
Suppose that (H.\ref{Assump: Instantaneous cost}) and (H.\ref{Assump: Dynamic Asumption}) hold, and  let $S$ be a smooth max approximator (Definition \ref{def: smooth entropy}) and $\smaxld(a) := \lambda S(a/\lambda)$.  There exists a constant $C > 0$ such that the value function for finite horizon (Definition \ref{def: finite horizon value}) satisfies: for any $\lambda > 0$,  $(m,d) \in \Theta \times \cD$ and $T \in \sN$,
\begin{align}
\label{eq: ARC finite horizon value}
\Big| V^{\lambda, \varphi^{\lambda,  T}}_{T}(m,d)  - \sum_{t=0}^{T-1} \beta^{t}\big(\smaxld \circ \alpha_{T-t}^\lambda\big)(m,d) \Big| &\leq C \|d\|^3\big(1 + \lambda^{-2} + \lambda^{-3} \|d\|  \big), \q \text{and}  \\
\label{eq: finite horizon value bound}
\Big| V^\lambda_{T}(m,d)  - \sum_{t=0}^{T-1} \beta^{t}\big(\smaxld \circ \alpha_{T-t}^\lambda\big)(m,d) \Big| &\leq C \|d\|^3\big(1 + \lambda^{-2} + \lambda^{-3} \|d\|  \big).
\end{align}
where $\varphi^{\lambda,  T}(t,m,d) := \nu^\lambda \big( \alpha^\lambda_{T-t+1}(m,d) \big) = \del_a \smaxld \big( \alpha^\lambda_{T-t+1}(m,d) \big)$ and $\alpha^\lambda_t$ is defined in \eqref{eq: alpha_t}.
\end{Theorem}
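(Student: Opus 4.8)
The plan is to run an induction on the horizon $T$ through the dynamic programming principle, comparing both value functions against the candidate $W_T(m,d) := \sum_{t=0}^{T-1}\beta^t\big(\smaxld\circ\alpha^\lambda_{T-t}\big)(m,d)$. The first observation is that $W_T$ itself satisfies the Bellman-type recursion $W_T = \big(\smaxld\circ\alpha^\lambda_T\big) + \beta W_{T-1}$, obtained by peeling off the $t=0$ summand and reindexing. On the other side, the DPP gives
$$V^\lambda_T(m,d) = \smaxld\Big( f(m,d) + \beta\sE\big[V^\lambda_{T-1}\big(\Phi(m,d,\cdot,\bs{\xi})\big)\big]\Big),$$
and, writing $U := \nu^\lambda\big(\alpha^\lambda_T(m,d)\big)$ for the policy $\varphi^{\lambda,T}$,
$$V^{\lambda,\varphi^{\lambda,T}}_T(m,d) = \sum_{i=1}^K U_i\Big(f_i(m,d) + \beta\sE\big[V^{\lambda,\varphi^{\lambda,T-1}}_{T-1}\big(\Phi(m,d,i,\bs{\xi})\big)\big]\Big) + \lambda\cH(U),$$
where the continuation uses exactly the horizon-$(T-1)$ policy by time-homogeneity of the index scheme $\varphi^{\lambda,T}(t,\cdot)=\nu^\lambda(\alpha^\lambda_{T-t+1})$.

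The technical core is a one-step consistency estimate. Define $g_i := f_i(m,d) + \beta\sE\big[W_{T-1}\big(\Phi(m,d,i,\bs{\xi})\big)\big]$. I would expand $\sE[W_{T-1}(\Phi)] - W_{T-1}(m,d)$ as a $\beta$-discounted sum over $s$ of the single-step returns $\sE[(\smaxld\circ\alpha^\lambda_s)(\Phi)] - (\smaxld\circ\alpha^\lambda_s)(m,d)$, and apply Lemma \ref{lem: one step return error} to each, replacing every such return by $L^\lambda_i(m,d)$ up to an error $C\|d\|^3(1+\lambda^{-2}+\lambda^{-3}\|d\|)$. The point is that the aggregated leading term $\beta L^\lambda_i(m,d)\sum_{t=0}^{T-2}\beta^t$ coincides exactly with the coefficient $L^\lambda_i(m,d)\sum_{s=1}^{T-1}\beta^s$ appearing in the definition \eqref{eq: alpha_t} of $\alpha^\lambda_T$; hence these cancel, leaving $g_i = \alpha^\lambda_{T,i}(m,d) + \beta W_{T-1}(m,d) + e_i$ with $\|e\|_\infty \leq C(1-\beta)^{-1}\|d\|^3(1+\lambda^{-2}+\lambda^{-3}\|d\|)$, the constant shift $\beta W_{T-1}(m,d)$ being identical across $i$.

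With this decomposition both bounds follow at once. For the policy value, since $U=\nu^\lambda(\alpha^\lambda_T)$ is the exact maximiser of $u\mapsto\sum_i u_i\alpha^\lambda_{T,i}+\lambda\cH(u)$ and $\sum_i U_i=1$, I obtain $\sum_i U_i g_i + \lambda\cH(U) = \big(\smaxld\circ\alpha^\lambda_T\big)(m,d) + \beta W_{T-1}(m,d) + \sum_i U_i e_i$, so the one-step discrepancy is the linear remainder $\sum_i U_i e_i$, bounded by $\|e\|_\infty$. For the optimal value I would instead use that $\smaxld$ is $1$-Lipschitz in the sup-norm (its gradient $\nu^\lambda$ lies in $\Delta^K$) to absorb the perturbation of the argument of $\smaxld$ by $\|e\|_\infty$ plus the propagated error. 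In both cases the error obeys $\mathcal{E}_T = (\text{one-step error}) + \beta\,\sE[\mathcal{E}_{T-1}(\Phi)]$; since (H.\ref{Assump: Dynamic Asumption})$(i)$ forces $\|D_t\|$ to be non-increasing, the state reached by $\Phi$ has $\|d'\|\leq\|d\|$, so a bound that is monotone increasing in $\|d\|$ is preserved under the map.

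The induction then closes by setting $B_T(\delta):=\sup\{|\mathcal{E}_T(m,d)|:\|d\|\leq\delta\}$, noting $B_0\equiv 0$, and iterating $B_T(\delta)\leq C(1-\beta)^{-1}\delta^3(1+\lambda^{-2}+\lambda^{-3}\delta)+\beta B_{T-1}(\delta)$; summing the resulting geometric series gives the claimed $T$-uniform bound with constant $C(1-\beta)^{-2}$ (which depends on $\beta$ but not on $(m,d,\lambda,T)$, as permitted). The main obstacle I anticipate is the geometric-series bookkeeping of the one-step estimate — verifying that the first-order $L^\lambda$ contributions cancel \emph{exactly} against the definition of $\alpha^\lambda_T$ so that only the genuinely third-order residual survives, and confirming that the resulting recursion contracts uniformly in $T$ rather than accumulating an $O(T)$ error.
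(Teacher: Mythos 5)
Your proposal is correct and follows essentially the same route as the paper's proof: an induction on the horizon $T$ through the dynamic programming principle, with Lemma \ref{lem: one step return error} supplying the one-step estimate, translation equivariance of $\smaxld$ absorbing the common shift $\beta W_{T-1}$, the exact-maximiser property of $\nu^\lambda(\alpha^\lambda_T)$ handling the policy value, and the $L^\lambda$ contributions cancelling exactly against the definition \eqref{eq: alpha_t} of $\alpha^\lambda_T$. The only difference is bookkeeping: the paper bundles the propagated and one-step errors into remainders $R^i_T$, $\tilde{R}^i_T$ and verifies the fixed-point inequality $\beta Q_{\lambda,d} + (1-\beta)Q_{\lambda,d} = Q_{\lambda,d}$ for $Q_{\lambda,d} = (1-\beta)^{-2}C\|d\|^3(1+\lambda^{-2}+\lambda^{-3}\|d\|)$, which is precisely your recursion $B_T \leq C(1-\beta)^{-1}\|d\|^3(1+\lambda^{-2}+\lambda^{-3}\|d\|) + \beta B_{T-1}$ summed into the same $(1-\beta)^{-2}$ constant.
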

\begin{proof} 
We will prove by induction that the upper-bounds for  \eqref{eq: ARC finite horizon value} and \eqref{eq: finite horizon value bound} are given by $Q_{\lambda,d} = (1-\beta)^{-2}  C \|d\|^3\big(1 + \lambda^{-2} + \lambda^{-3} \|d\|  \big)$ where $C \geq 0$ is a constant in Lemma \ref{lem: one step return error}.

It is clear from \eqref{eq: smax} that $V^\lambda_1 = \smaxld \circ \alpha_{1}^\lambda$. Hence, the required inequality  holds for $T =1$. 

Assume that the required inequality holds for $T-1$. 

Define $ R^i_T(m,d) := \beta \mathbb{E}\big[V^{\lambda, \varphi^{\lambda,  T-1}}_{T-1} \big( \Phi(m,d, i, \bs{\xi}_1) \big) \big] - \sum_{t=1}^{T-1} \beta^{t}\left( \big(\smaxld\circ \alpha_t^\lambda\big)(m,d) +  L_i^\lambda (m,d) \right).$ We see that
\begin{align}
\label{eq:derived values action}
&V_{T}^{\lambda,\varphi^{\lambda,T}}(m,d) = \sum_{i=1}^KU^{\varphi^{\lambda,T},m,d}_{i,1}\Big(f_i(m,d) + \beta \mathbb{E}\big[V_{T-1}^{\lambda,\varphi^{\lambda,T-1}}\big( \Phi(m,d, i, \bs{\xi}_1)\big) \big]\Big) + \lambda \mathcal{H}(U^{\varphi^{\lambda,T},m,d}_{1}) \nonumber \\
&\quad =\bigg(\sum_{i=1}^KU^{\varphi^{\lambda,T},m,d}_{i,1}\Big(\alpha^\lambda_{T,i}(m,d) + R^i_T(m,d) \Big)+ \lambda \mathcal{H}(U^{\varphi^{\lambda,T},m,d}_{1})\bigg) +   \sum_{t=1}^{T-1} \beta^{t} \big(\smaxld \circ \alpha^\lambda_{T-1-t}\big)(m,d) \nonumber \\
&\quad =\bigg(\sum_{i=1}^KU^{\varphi^{\lambda,T},m,d}_{i,1} R^i_T(m,d) \bigg) +  \big(\smaxld \circ \alpha^\lambda_{T}\big)(m,d) +   \sum_{t=1}^{T-1} \beta^{t} \big(\smaxld \circ \alpha^\lambda_{T-1-t}\big)(m,d) 
\end{align}
 where the second equality follows from substituting $\alpha^\lambda_{T,i}(m,d) = f_i(m,d) + \beta \big( \sum_{t=1}^{T-1} \beta^{t}  \big) L_i^\lambda(m,d) $ and the final inequality follows from \eqref{eq: smax}-\eqref{eq: nu} and the fact that  $$U^{\varphi^{\lambda,T},m,d}_{1} = \del_a \smaxld \big( \alpha^\lambda_{T}(m,d) \big) = \argmax_{u \in \Delta^K} \bigg(\sum_{i=1}^K u_i \alpha^\lambda_{T,i}(m,d) + \lambda \cH(u) \bigg).$$
 By our inductive hypothesis, Lemma \ref{lem: one step return error} and (H.\ref{Assump: Dynamic Asumption})$(i)$, we see that
\begin{align*}
    |R^i_T(m,d)| &:= \beta \mathbb{E}\left|V^{\lambda, \varphi^{\lambda,  T-1}}_{T-1} \big( \Phi(m,d, i, \bs{\xi}_1) \big) -\sum_{t=0}^{T-2} \beta^{t}\big(\smaxld \circ \alpha_{T-t}^\lambda \big)\big( \Phi(m,d, i, \bs{\xi}_1) \big) \right| \\
    &\qquad + \sum_{t=0}^{T-2} \beta^{t}\left|\sE  \big[\big(\smaxld\circ \alpha_t^\lambda\big) \big( \Phi(m,d, i, \bs{\xi}_1) \big) \big] - \big(\smaxld\circ \alpha_t^\lambda\big)(m,d) - L_i^\lambda (m,d) \right| \\
    & \leq \beta Q_{\lambda, d+b_i(m,d)} + \Big(\sum_{t=0}^{T-2} \beta^{t} \Big) (1-\beta)^2 Q_{\lambda,d} \leq \beta Q_{\lambda, d} + (1-\beta)Q_{\lambda, d} = Q_{\lambda, d}.
\end{align*}
Therefore, $\big| \sum_{i=1}^K U^{\varphi^{\lambda,T},m,d}_{i,1} R^i_T(m,d) \big| \leq Q_{\lambda, d}$ and the inequality \eqref{eq: ARC finite horizon value} for $T$ follows from \eqref{eq:derived values action}.

Similarly, to prove \eqref{eq: finite horizon value bound},  we define $\tilde{R}^i_T(m,d)$ in the same manner as  ${R}^i_T(m,d)$ but replace $V^{\lambda, \varphi^{\lambda,  T-1}}_{T-1}$  by $V^{\lambda}_{T-1}$.  The similar argument yields $|\tilde{R}^i_T(m,d)| \leq Q_{\lambda,d}$ for all $T \in \sN$ and $i \in [K]$. 

It follows from the dynamic programming principle that
\begin{align*}
&V_{T}^\lambda(m,d) = \sup_{u \in \Delta^K}\bigg\{\sum_{i=1}^Ku_{i}\Big(f_i(m,d) + \beta \mathbb{E}\big[V^\lambda_{T-1}\big( \Phi(m,d, i, \bs{\xi}_1)\big) \big]\Big) + \lambda \mathcal{H}(u)\bigg\}
\\
&\quad = \sup_{u \in \Delta^K}\bigg\{\bigg( \sum_{i=1}^Ku_{i} \alpha^\lambda_{T,i}(m,d) + \lambda \mathcal{H}(u)\bigg)+ \bigg( \sum_{i=1}^Ku_{i} \tilde{R}^i_T(m,d)\bigg) \bigg\} +   \sum_{t=1}^{T-1} \beta^{t} \big(\smaxld \circ \alpha^\lambda_{T-1-t}\big)(m,d).
\end{align*}
Since $\sup_{u \in \Delta^K}\big| \sum_{i=1}^Ku_{i} \tilde{R}^i_T(m,d)\big| \leq Q_{\lambda,d}$, the inequality \eqref{eq: finite horizon value} for $T$ follows from \eqref{eq: smax}.  
\end{proof}

\subsection{Analysis of the regularised control problem over infinite horizon }
 We see that the error bound of our approximation in Theorem \ref{Thm: Expression of V_T} is uniform in $T$. We can now prove Theorem \ref{Thm: main bound for value function and approximation} by taking $T \to \infty$ in Theorem \ref{Thm: Expression of V_T}. 

\begin{proof}[Proof of Theorem \ref{Thm: main bound for value function and approximation}]
Fix  $(m,d) \in \Theta \times \cD$ and $\lambda >0$.

Since  $(\smaxld \circ \alpha^\lambda_t \big)(m,d) \to (\smaxld \circ \alpha^\lambda \big)(m,d)$ as $t \to \infty$, it follows from a Tauberian theorem (Theorem \ref{Thm: Tauberian}) that

\begin{equation}
\label{eq: smax convergence}
     \sum_{t=0}^{T-1} \beta^{t}\big(\smaxld \circ \alpha_{T-t}^\lambda\big)(m,d) \to (1-\beta)^{-1} \big(\smaxld \circ \alpha^\lambda \big)(m,d) \qquad \text{as } T \to \infty.
\end{equation}

Next, we will prove that $V^\lambda_T(m,d) \to   V^\lambda_\infty(m,d)	$ as $T \to \infty$. By (H.\ref{Assump: Dynamic Asumption}) together with the Cauchy--Schwarz inequality,  we can show that  for any $\varphi \in \Psi$,  $\sE_{m,d}|M^\varphi_{t+1} -M^\varphi_{t}| \leq C\|D^\varphi_t\| \leq C\|d\|$.   In particular,  $\sE_{m,d}|M^\varphi_{t}| \leq |m| + CT\|d\|$.   By (H.\ref{Assump: Instantaneous cost}), there exists a constant $C \geq 0$, such that
		\begin{equation}
		\left.
		    \label{eq: value converge horizon}
		\begin{aligned}
		&\sup_{\varphi \in \Psi}\sum_{t=T}^{\infty}\beta^{t} \sE_{m,d} \bigg[\bigg|\sum_{i=1}^K f_{i}\big(M^{\varphi}_t, D^{\varphi}_t \big) U^\varphi_{i,t+1}\bigg| \bigg] \leq C \sup_{\varphi \in \Psi}\sum_{t=T}^{\infty}\beta^{t} \sE_{m,d} \big[ |M^\varphi_t| + \|d\| + 1\big]
		\\
		& \hspace{5cm} \leq C \sum_{t=T}^\infty \beta^t \big( (t+1) \|d\| + |m| + 1\big) \to 0 \qq \text{as} \q T \to \infty. 
\end{aligned}	
\right\}
\end{equation}		
Moreover, since $\cH$ is a smooth entropy, $\cH$ is bounded and so  $\sup_{U \in \cU} \sum_{t=T}^\infty \beta^t \sE_{m,d} \big[ | \cH(U^\varphi_{t+1})| \big]  \to 0$ as $T \to 0$.   Combining this with \eqref{eq: value converge horizon}, we obtain
{\small
\begin{equation}
\label{eq: convergence of V_T}
    \Big|V_T^\lambda(m,d) - V^\lambda_\infty(m,d)\Big| \leq \sup_{\varphi \in \Psi}\bigg|\mathbb{E}_{m,d}\bigg[\sum_{t=T}^{\infty}\beta^{t} \Big( \sum_{i=1}^K f_{i}\big(M^{\varphi}_t, D^{\varphi}_t \big) U^\varphi_{i,t+1} + \lambda \mathcal{H}(U^\varphi_{t+1}) \Big)\bigg]\bigg| \to 0 \; \text{as} \; T \to\infty.
\end{equation}
}

Next, we will prove that $V^{\lambda, \varphi^{\lambda,T}}_{T}(m,d) \to V^{\lambda, \varphi^{\lambda}}(m,d)$ where $\varphi^{\lambda,  T}(t,m,d) := \del_a \smaxld \big( \alpha^\lambda(m,d) \big) $ and $\varphi^{\lambda,  T}(t,m,d) := \del_a \smaxld \big( \alpha^\lambda_{T-t+1}(m,d) \big) $ with $\alpha$ and $\alpha_t$ given \eqref{eq: alpha} and \eqref{eq: alpha_t}, respectively.

For any policy $\varphi \in \Psi$, let $g(t ; \varphi) := \mathbb{E}_{m,d}\big[ \big(\sum_{i=1}^K f_{i}\big(M^{\varphi}_t, D^{\varphi}_t \big) U^\varphi_{i,t+1}\big) + \lambda \mathcal{H}(U^\varphi_{t+1}) \big]$. By the similar argument as in \eqref{eq: value converge horizon}, we can find a constant $C \geq 0$ such that $\sup_{\varphi \in \Psi} g(t; \varphi) \leq C\big( (t+1) \|d\| + |m| + 1\big)$. Since $V^{\lambda, \varphi^{\lambda,T}}_{T}(m,d) = \sum_{t=0}^{T-1}\beta^{t} g(t ; \varphi^{\lambda,T})$ and $V^{\lambda, \varphi^{\lambda}}(m,d) = \sum_{t=0}^{\infty}\beta^{t} g(t ; \varphi^\lambda)$,  it suffices to show that for any fixed $t \in \sN$,  $g(t ; \varphi^{\lambda,T}) \to g(t ; \varphi^\lambda)$ as $T \to \infty$. Given this, the required convergence follows from a Tauberian theorem (Theorem \ref{Thm: Tauberian 2}).

Fix $t \in \sN$. By (H.\ref{Assump: Instantaneous cost}) and (H.\ref{Assump: Dynamic Asumption}), there exists a constant $C \geq 0$ such that $\big| L^\lambda(M^{\varphi^{\lambda },m,d}_s, D^{\varphi^{\lambda },m,d}_s)  \big|  \leq C$ for all $s \in \sN$. For any $s = 1,2, ...,t$, it follows from the mean value inequality that 
\begin{equation}
\label{eq: prob bound}
\left.
\begin{aligned}
&\Big| \nu^\lambda \big( \alpha^\lambda(M^{\varphi^{\lambda },m,d}_s, D^{\varphi^{\lambda },m,d}_s) \big) -\nu^\lambda \big( \alpha_{T-s+1}^\lambda(M^{\varphi^{\lambda },m,d}_s, D^{\varphi^{\lambda },m,d}_s) \big) \Big| \\ 
& \qquad \leq \|\partial_a \nu^\lambda\|_\infty \Big( \sum_{r=T-s + 1}^{\infty} \beta^{r}  \Big) \big| L^\lambda(M^{\varphi^{\lambda },m,d}_s, D^{\varphi^{\lambda },m,d}_s)  \big|  \\
&\qquad 
 \leq C \|\partial_a \nu^\lambda\|_\infty \beta^{T-s+1} (1-\beta)^{-1} \leq C \|\partial_a \nu^\lambda\|_\infty \beta^{T-t+1} (1-\beta)^{-1}. 
\end{aligned}
\right\}
\end{equation}
Fix $\epsilon \in (0,1)$ and choose $T_0$ such that, for all $T \geq T_0$,  we have $C \|\partial_a \nu^\lambda\|_\infty \beta^{T-t+1} (1-\beta)^{-1} < \epsilon/K$. 

As discussed in Section \ref{sec: setup}, we recall that the action corresponding to the policy $\varphi$ is given by $A^{\varphi, m,d}_s = \sup\big\{i : \sum_{k=1}^i U^{\varphi, m,d}_{s,k} \geq \zeta_s \big\}$.  By \eqref{eq: prob bound}, for $T \geq T_0$, the probability that the actions corresponding to $\varphi^\lambda$ and $\varphi^{\lambda,T}$ disagree at time $s$, provided they agree up to time $s-1$, satisfies
\begin{align*}
    &\sP(A^{\varphi^{\lambda},m,d}_s \neq A^{\varphi^{\lambda,T},m,d}_s |A^{\varphi^{\lambda},m,d}_r = A^{\varphi^{\lambda,T},m,d}_r \; \forall r = 1, 2,...,s-1 ) \\
    &\qquad \qquad \leq \sum_{i=1}^K \Big| \nu_i^\lambda \big( \alpha^\lambda(M^{\varphi^{\lambda },m,d}_s, D^{\varphi^{\lambda },m,d}_s) \big) -\nu_i^\lambda \big( \alpha_{T-s}^\lambda(M^{\varphi^{\lambda },m,d}_s, D^{\varphi^{\lambda },m,d}_s) \big) \Big| \leq \epsilon.
\end{align*}

Let $E_T$ be the the event that the actions corresponding to $\varphi^\lambda$ and $\varphi^{\lambda,T}$ agree up to time $t$, i.e.
$E_T := \{ A^{\varphi^{\lambda},m,d}_s = A^{\varphi^{\lambda,T},m,d}_s \; \forall s = 1,...,t \}$. This gives
$\sP(E_T) \geq (1-\epsilon)^t \geq 1 - t\epsilon$, i.e.  $\sP(E_T^c) \leq t \epsilon$.

By (H.\ref{Assump: Instantaneous cost}) and (H.\ref{Assump: Dynamic Asumption}), we can find a constant $C \geq 0$ such that for any $\varphi \in \Psi$, $$\sE_{m,d}|f\big(M^{\varphi}_t, D^{\varphi}_t \big)|^2 \leq Ct (|m|^2 + \|d\|^2 +1).$$ Moreover, as $\cH$ is bounded,  we can assume (wlog) that $\sup_{u\in \Delta^K}\cH(u) \leq C$. 

Since $(M^{\varphi^\lambda,m,d}_t, D^{\varphi^\lambda,m,d}_t) =  (M^{\varphi^{\lambda,T},m,d}_t, D^{\varphi^{\lambda,T},m,d}_t)$ and $U^{\varphi^\lambda, m,d}_t = U^{\varphi^{\lambda,T}, m,d}_t$ on $E_T$, we see that
$$|g(t;\varphi^{\lambda,T}) - g(t;\varphi^\lambda)| \leq 2\sup_{\varphi \in \Psi} \sE \Big[ \big(|f(M^\varphi_t, D^\varphi_t)|  + \lambda \cH(U^\Psi_{t+1})\big)\sI_{E_T^c} \Big] \leq 4Ct (|m|^2 + \|d\|^2 +2)(t\epsilon)$$
where the final inequality follows from the Cauchy--Schwarz inequality. 
As $\epsilon$ is arbitrary, we obtain the required result that for any fixed $t \in \sN$,  $g(t ; \varphi^{\lambda,T}) \to g(t ; \varphi^\lambda)$ as $T \to \infty$ which implies that
\begin{equation}
    \label{eq: value of policy convergence}
    V^{\lambda, \varphi^{\lambda,T}}_{T}(m,d) \to V^{\lambda, \varphi^{\lambda}}(m,d) \qquad \text{as } T \to \infty.
\end{equation}
Finally, combining \eqref{eq: smax convergence}, \eqref{eq: convergence of V_T}  and \eqref{eq: value of policy convergence} with Theorem \ref{Thm: Expression of V_T} gives the required error bound.
\end{proof}

\subsection{Analysis of the (unregularised) control problem}
This section is dedicated to prove Theorem \ref{Thm: error bound consistent}.

\begin{proof}[Proof of Theorem \ref{Thm: error bound consistent}]
Through the following argument, let $C \geq 0$ be a generic constant, depending only on $\beta$ and the bounds in (H.\ref{Assump: Instantaneous cost}) and (H.\ref{Assump: Dynamic Asumption}) which could be different between lines.  

 We will first show that there exists a constant $\tilde{C} \geq 0$ such that
 \begin{equation}
    \label{eq:diff Q value}
    V(m,d) - Q \big(\varphi^{\bs \lambda}(1,m,d), m, d \big) \leq \tilde{C}( \|d\|^{3-2\kappa} + \|d\|^{4-3\kappa} + \|d\|^{\kappa} ),
\end{equation}
where $Q(u,m,d) := \sum_{i=1}^K u_i \big(f_i(m,d) + \beta \sE\big[V\big(\Phi(m,d,i, \bs{\xi}_1) \big) \big] \big)$.

Observe that
{\footnotesize
$$V^{\bs{\lambda}(m,d), \varphi^{\bs{\lambda}(m,d)}}(m,d) = \sum_{i=1}^K  \varphi_i^{\bs{\lambda}}(1,m,d) \bigg( f_i(m,d)  + \beta  \sE \Big[V^{\bs{\lambda}(m,d), \varphi^{\bs{\lambda}(m,d)}}_\infty \big(\Phi(m,d,i, \bs{\xi}_1) \big)\Big] \bigg) + \bs{\lambda}(m,d) \cH\big(\varphi^{\bs{\lambda}}(1,m,d) \big)$$
}
and write
{\small
\begin{equation}
   \label{eq:decomposition q function} 
\left.
\begin{aligned}
    &Q \big(\varphi^{\bs \lambda}(1,m,d), m, d \big) - V(m,d)  \\
    &\quad = \Big( V^{\bs{\lambda}(m,d), \varphi^{\bs{\lambda}(m,d)}}(m,d) - V^{\bs{\lambda}(m,d)}(m,d) \Big) + \Big(V^{\bs{\lambda}(m,d)}(m,d) - V(m,d) \Big)   \\
    &\quad \quad + \beta \sum_{i=1}^K  \varphi_i^{\bs{\lambda}}(1,m,d) 
     \sE \Big[V^{\bs{\lambda}(m,d)}_\infty \big(\Phi(m,d,i, \bs{\xi}_1) \big) - V^{\bs{\lambda}(m,d), \varphi^{\bs{\lambda}(m,d)}}_\infty \big(\Phi(m,d,i, \bs{\xi}_1) \big)\Big]
     \\
    &\quad
    \quad + \beta \sum_{i=1}^K  \varphi_i^{\bs{\lambda}}(1,m,d) 
     \sE \Big[V \big(\Phi(m,d,i, \bs{\xi}_1) \big) - V^{\bs{\lambda}(m,d)}_\infty \big(\Phi(m,d,i, \bs{\xi}_1) \big)\Big] - \bs{\lambda}(m,d) \cH\big(\varphi^{\bs{\lambda}}(1,m,d) \big) 
\end{aligned}
\right\}
\end{equation}
}

Since $\cH$ is bounded, for any $\lambda > 0$,  $\lambda|\cH(u)| \leq C \lambda$ for all $u \in \Delta^K$ and $\big|V^\lambda (m,d) - V(m,d) \big| \leq C \lambda$ for all $(m,d) \in \Theta \times \cD$. Combining this observation with \eqref{eq:decomposition q function} and Theorem \ref{Thm: main bound for value function and approximation}, we see that
$$\Big|Q \big(\varphi^{\bs \lambda}(1,m,d), m, d \big) - V(m,d) \Big| \leq C \bs{\lambda}(m,d) + C \| d \|^3 \big(1 + \bs{\lambda}(m,d)^{-2} + \bs{\lambda}(m,d)^{-3} \|d\| \big).$$

Since $\cD$ is compact, $\|d\|^3 \leq C\|d\|^{3-2\kappa}$. Therefore,  \eqref{eq:diff Q value} follows from our assumption that $\underline{c} \|d\|^\kappa \leq \bs{\lambda}(m,d) \leq \bar{c} \|d\|^\kappa$ for some constant $\underline{c}, \bar{c} > 0$.

Let $\varphi^*:\sN \times \Theta \times \cD \to \Delta^K$ be the optimal (stationary) feedback policy of \eqref{Eq: no entropy reward}, i.e. $V^{\varphi^*} = V$. For each $N \in \sN$, let $$\varphi^{\bs{\lambda},N}(t,m,d) = \begin{cases}
\varphi^{\bs{\lambda}}(t,m,d) &; t = 1, 2, ..., N \\ \varphi^{*}(t,m,d) &; \text{otherwise}
\end{cases}. $$
We next show, by induction, that
$V(m,d) - V^{\varphi^{\bs{\lambda},N}}(m,d) \leq \eps(d)$ where $\eps(d) :=  (1-\beta)^{-1}\tilde{C}
( \|d\|^{3-2\kappa} + \|d\|^{4-3\kappa} + \|d\|^{\kappa} )$ with the constant $\tilde{C} \geq 0$ given in \eqref{eq:diff Q value}.

It is clear from the definition of $\varphi^{\bs{\lambda},N}$ that the required inequality holds for $N= 0$. Assume that the required inequality holds for $N-1$. Observe that
\begin{align*}
    V^{\varphi^{\bs{\lambda},N}}(m,d) &= \sum_{i=1}^K  \varphi_i^{\bs{\lambda}}(1,m,d) f_i(m,d) + \beta \sE_{m,d} \big[ V(M^{\varphi^{\bs \lambda}}_1 , D^{\varphi^{\bs \lambda}}_1 )\big] + \beta R_N(m,d) \\
    &= Q \big(\varphi^{\bs \lambda}(1,m,d), m, d \big) + \beta R_N(m,d),
\end{align*}
where $R_N(m,d) := \sE_{m,d}\big[ V^{\varphi_{N-1}^{\bs{\lambda}}}(M^{\varphi^{\bs \lambda}}_1 , D^{\varphi^{\bs \lambda}}_1 ) -  V(M^{\varphi^{\bs \lambda}}_1 , D^{\varphi^{\bs \lambda}}_1 )\big]$. By the induction hypothesis and (H.\ref{Assump: Dynamic Asumption}), $\|D^{\varphi^{\bs \lambda}}_1\| \leq \|d\|$, and thus
$|R_N(m,d)| \leq \sE[\eps(D^{\varphi^{\bs \lambda}}_1)] \leq \eps(d)$. Hence, the induction step follows from \eqref{eq:diff Q value}. 

The main inequality of this theorem follows from the fact that $ V^{\varphi^{\bs{\lambda},N}}(m,d) \to  V^{\varphi^{\bs{\lambda}}}(m,d)$ as $N \to \infty$ which can be proved in the similar manner as the proof that $V^\lambda_T(m,d) \to   V^\lambda_\infty(m,d)$ as $T \to \infty$ in Theorem \ref{Thm: main bound for value function and approximation}.
\end{proof}

\subsection{Complete Learning}
We have discussed in Section \eqref{sec: limit} that some bandit algorithms e.g. greedy or KG may suffer from incomplete learning. 
Theorem \ref{Thm: Complete Learning for bounded}
show that the ARC algorithm overcomes this limitation, in the sense that $\|D^{\varphi^{\bs\lambda}}_t \| \to 0$ as $t \to \infty$.  This section is dedicated to prove this result. 

\begin{proof}[Proof of Theorem \ref{Thm: Complete Learning for bounded}]
By (H.\ref{Assump: Instantaneous cost}), (H.\ref{Assump: Dynamic Asumption}) and the fact that $\bs{\lambda}(m,d) \geq \underline{c} \|d\|^\kappa$, there exists a constant $C \geq 0$ such that $|L^{\bs{\lambda}(m,d)}(m,d)| \leq C\big(1+\bs{\lambda}(m,d)^{-1} \big)\|d\|^2 \leq C( \|d\|^2 + \|d\|^{2-\kappa}\underline{c}^{-1})$. Since $\kappa \leq 2$ and $\cD$ is compact,  $L^{\bs{\lambda}(m,d)}(m,d)$ is uniformly bound on $\Theta \times \cD$. Combining this with the boundedness of $f$ yields $\sup_{(m,d) \in \Theta \times \cD}|\alpha^{\bs{\lambda}(m,d)}(m,d)| \leq C$ for some constant $C \geq 0$. 

Fix  $(m,d) \in \Theta \times \cD$, $i \in [K]$ and $\epsilon > 0$ and consider the events $E_\epsilon := \{ \|D^{\varphi^{\bs{\lambda}},m,d}_t  \| > \epsilon \q  \forall t \in \sN \}$, $F_i := \{ A^{\varphi^{\bs{\lambda}},m,d}_s = i \; \text{for finitely many } t \in \sN \}$ and $G_{i,t} := \{A^{\varphi^{\bs{\lambda}},m,d}_t \neq i \}$. 

For $\|d\| > \epsilon$, $\alpha^{\bs{\lambda}(m,d)}(m,d)/\bs{\lambda}(m,d)$ takes values in a compact set.  Hence, there exists $r \in (0,1)$ such that
$\nu^{\bs{\lambda}(m,d)}_i\big(\alpha^{\bs{\lambda}(m,d)}(m,d)\big) = (\del_a S)_i \big(\alpha^{\bs{\lambda}(m,d)}(m,d)/\bs{\lambda}(m,d)\big) > r$ for $\|d\| > \epsilon$. In particular, for any $n,N \in \sN$,
$
    \sP\big( \bigcap_{t=n}^N G_{i,t} \big| E_\epsilon \big) =  \prod_{t=n}^N \sP\big(  G_{i,t} \big| E_\epsilon  \cap \bigcap_{s=n}^{t-1} G_{i,s} \big) \leq  (1-r)^{N-n+1} \to 0
$ as $N \to \infty$.
Therefore, $$\sP(F_i \cap E_\epsilon) = \sP(E_\epsilon) \sP(F_i|E_\epsilon) =\sP(E_\epsilon) \sP\bigg( \bigcup_{n=1}^\infty \bigcap_{t=n}^\infty G_{i,t} \bigg| E_\epsilon \bigg) = \sP(E_\epsilon)\lim_{n \to \infty}  \sP\bigg( \bigcap_{t=n}^\infty G_{i,t} \bigg| E_\epsilon \bigg) \leq 0.$$
 We can deduce from assumption $(i)$ that
 $E_\epsilon \subseteq \cup_{i \in [K]} F_i$. Therefore, $\sP(E_\epsilon) = \sP(\bigcup_{i \in [K]} F_i \cap E_\epsilon) = 0$. The required result follows by considering the event $\bigcup_{n=1}^\infty E_{1/n}$.
\end{proof}

\appendix

\section{Proofs of relevant results}
\label{sec:Proofs of relevant results}

\begin{Theorem}[Robust Representation]
	\label{thm: robust rep}
	A convex  nonlinear expectation $S$ admits a representation of the form
	$S(a) = \sup_{u \in \Delta^K} \Big(\sum_{i=1}^K u_i a_i + \mathcal{H}_{\max}(u)\Big),$
	where
	$\mathcal{H}_{\max}(u) := - \sup_{a \in \mathcal{A}_S} \Big(\sum_{i=1}^Ku_i a_i\Big)$ and $\mathcal{A}_S:= \{a \in \mathbb{R}^K : S(a) \leq 0\}.$
	
	Furthermore, $\mathcal{H}_{\max}$ is the maximal function which represents $S$, i.e. if there exists $\mathcal{H}$ such that \eqref{eq: robust rep} holds with $\mathcal{H}$, then $\mathcal{H}(u) \leq \mathcal{H}_{\max}(u)$ for all $u \in \Delta^K$.
	\begin{proof}
	See F\"ollmer and Schied \cite[Theorem 4.16]{stoc_fin} or Frittelli and Rosazza Gianin \cite{Frittelli_and_Rosazza}.
	\end{proof}
\end{Theorem}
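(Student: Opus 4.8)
The plan is to read this as the Fenchel--Moreau biconjugation theorem applied to the convex function $S$, with the two remaining axioms (monotonicity and translation equivariance) used to cut the effective domain of the conjugate down to the simplex $\Delta^K$. I would introduce the convex conjugate $S^*(u) := \sup_{a \in \mathbb{R}^K}\big(\sum_{i=1}^K u_i a_i - S(a)\big)$. Since $S$ is finite-valued and convex on all of $\mathbb{R}^K$, it is automatically continuous, hence proper and lower semicontinuous, so Fenchel--Moreau gives $S = S^{**}$, that is $S(a) = \sup_{u \in \mathbb{R}^K}\big(\sum_{i=1}^K u_i a_i - S^*(u)\big)$.

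The second step restricts this supremum to $\Delta^K$ by showing $S^*(u) = +\infty$ off the simplex. Monotonicity forces $u \geq 0$: if $u_j < 0$, testing $a = -t e_j$ (so that $a \leq 0$) gives $\sum_i u_i a_i - S(a) \geq -t u_j - S(0) \to +\infty$ as $t \to \infty$. Translation equivariance forces $\sum_i u_i = 1$: replacing $a$ by $a + c\bm{1}_K$ shifts the objective by $c(\sum_i u_i - 1)$, which is unbounded in $c$ unless $\sum_i u_i = 1$. Together these pin the domain of $S^*$ down to $\Delta^K$.

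The third step identifies $S^*$ with $-\mathcal{H}_{\max}$ on $\Delta^K$. The key observation is that for $u \in \Delta^K$ and arbitrary $a$, translation equivariance gives $\sum_i u_i a_i - S(a) = \sum_i u_i\big(a_i - S(a)\big)$, and the shifted point $a - S(a)\bm{1}_K$ lies in $\mathcal{A}_S$ (its $S$-value is exactly $0$); conversely every $a \in \mathcal{A}_S$ satisfies $S(a) \leq 0$, so $\sum_i u_i a_i - S(a) \geq \sum_i u_i a_i$. These two inclusions sandwich the supremum to give $S^*(u) = \sup_{a \in \mathcal{A}_S}\sum_i u_i a_i = -\mathcal{H}_{\max}(u)$, and substituting back into $S = S^{**}$ yields the stated representation. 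Maximality is then immediate: if $\mathcal{H}$ also represents $S$, then for every $a \in \mathcal{A}_S$ we have $\sum_i u_i a_i + \mathcal{H}(u) \leq S(a) \leq 0$, so $\mathcal{H}(u) \leq -\sum_i u_i a_i$ for all such $a$, whence $\mathcal{H}(u) \leq -\sup_{a \in \mathcal{A}_S}\sum_i u_i a_i = \mathcal{H}_{\max}(u)$.

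The hard part is not conceptual but the bookkeeping in the second step: one must verify that the domain of $S^*$ collapses to the simplex exactly, combining the positivity constraint coming from monotonicity with the normalisation constraint coming from translation equivariance, and must confirm that $S$ being everywhere finite already legitimises the biconjugation without any additional regularity hypothesis.
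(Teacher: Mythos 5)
Your proof is correct, and it is genuinely more than what the paper provides: the paper's ``proof'' is only a citation to F\"ollmer--Schied and Frittelli--Rosazza Gianin, whereas you give a self-contained argument. Each step checks out: finite convex functions on $\mathbb{R}^K$ are indeed continuous, so Fenchel--Moreau applies with no extra hypotheses; monotonicity kills $S^*(u)$ for $u_j<0$ via $a=-te_j$; translation equivariance kills it when $\sum_i u_i\neq 1$ by letting $c\to\pm\infty$; and the identification $S^*(u)=\sup_{a\in\mathcal{A}_S}\sum_i u_i a_i$ on $\Delta^K$ is exactly right --- the shift $a\mapsto a-S(a)\bm{1}_K$ lands in $\mathcal{A}_S$ (giving the upper bound), while $S\leq 0$ on $\mathcal{A}_S$ gives the lower bound. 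The maximality claim follows cleanly from testing the representation on points of $\mathcal{A}_S$, as you do. For comparison: the cited results are stated for risk measures and nonlinear expectations on general (infinite-dimensional) spaces, where one cannot get biconjugation for free and must impose or verify lower semicontinuity (e.g.\ a Fatou-type property); their proofs run through the acceptance set $\mathcal{A}_S$ in essentially the same way your Step 3 does, so your argument is the finite-dimensional specialisation of the standard one, with the dimension hypothesis doing the regularity work. What your route buys is a short, citation-free proof tailored to the paper's actual setting; what the citation buys is generality the paper never uses. One cosmetic remark: it is worth stating explicitly at the end that $\dom S^*\subseteq\Delta^K$ lets you replace $\sup_{u\in\mathbb{R}^K}$ by $\sup_{u\in\Delta^K}$ in $S=S^{**}$ because points outside the simplex contribute $-\infty$ to the supremum; you clearly know this, but it is the hinge between your Steps 2 and 3.
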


\begin{proof}[Proof of Theorem \ref{thm: equivalence to boundedness}] {$(i) \Rightarrow (ii)$ :} Fix $i \in [K]$. Consider $a = (N+\epsilon)e_i+ \sum_{j \neq i} r_j e_j$ where $r_j \in \mathbb{R}$ for all $j \neq i$ and $e_i$ is the $i$-th basis vector in $\sR^K$. By $(i)$, $S(a) + N \geq \max(N+\epsilon, r_j) \geq N+\epsilon$. Hence, $S(a) \geq \epsilon > 0$.  
	
	As $\epsilon$ is arbitrary, it follows that $\mathbb{R}^{i-1} \times (N, \infty) \times \mathbb{R}^{K-i} \subseteq \mathcal{A}^c_S$. The result then follows by considering intersection over all $i$.
	
	\paragraph{$(ii) \Rightarrow (iii)$:} By Theorem \ref{thm: robust rep}, we can write 
	$S(a) = \sup_{u \in \Delta^K} \Big(\sum_{i=1}^K u_i a_i + \mathcal{H}_{max}(u)\Big),$
	where
	$\mathcal{H}_{max}(u) := - \sup_{a \in \mathcal{A}_S} \Big(\sum_{i=1}^Ku_i a_i\Big) $ with $\mathcal{A}_S:= \{a \in \mathbb{R}^K : S(a) \leq 0\}.$
	
	As $\mathcal{A}_S \subseteq (-\infty, N]^K$ and $u \in \Delta^K$,
	$\mathcal{H}_{max}(u) \geq - \sup_{a \in (-\infty, N]^K} \Big(\sum_{i=1}^Ku_i a_i\Big) \geq -N.$
	
	Moreover, by \eqref{eq: robust rep}, we have $\sup_{u \in \Delta^K}\mathcal{H}_{max}(u) \leq S(0)$. Therefore,   $\mathcal{H}_{max}$ is bounded.
	
	\paragraph{$(iii) \Rightarrow (iv)$ :} Fix $a \in \mathbb{R}^K$ and define $i^* \in \argmax_i a_i$. Then
	\begin{align*}
	&- \lambda\sup_{u \in \Delta^K}|\mathcal{H}(u)| \leq \lambda \mathcal{H}(e^{(i^*)}) = \sum_{i=1}^K(e^{(i^*)})_ia_i + \lambda \mathcal{H}(e^{(i^*)}) - \max_i a_i  \\
	&  \qquad \leq \smaxld(a) - \max_i a_i \leq \sup_{u \in \Delta^K} \Big(\sum_{i=1}^K u_i a_i\Big) + \lambda\sup_{u \in \Delta^K}|\mathcal{H}(u)| - \max_i a_i = \lambda \sup_{u \in \Delta^K}|\mathcal{H}(u)|.
	\end{align*}
	Hence, $\sup_{a \in \mathbb{R}} |\smaxld(a) - \max_i a_i| \leq \lambda \sup_{u \in \Delta^K}|\mathcal{H}(u)| \to 0$ as $\lambda \downarrow 0$.
	
	\paragraph{$(iv) \Rightarrow (i)$} Find $N > 0$ such that
	$$1 \geq \sup_{a \in \mathbb{R}} |S^{1/N}_{\text{max}}(a) - \max_i a_i| = \frac{1}{N}\sup_{a \in \mathbb{R}} |S(Na) - \max_i Na_i| = \frac{1}{N}\sup_{a \in \mathbb{R}} |S(a) - \max_i a_i|.$$
	By rearranging the inequality above, the result follows.
\end{proof}

\begin{proof}[Proof of Lemma \ref{lem: error of L}]  Through the following argument, let $C \geq 0$ be a generic constant, depending only on $\beta$ and the bounds in (H.\ref{Assump: Instantaneous cost}) and (H.\ref{Assump: Dynamic Asumption}) which could be different between lines.  

Recall expressions for $\cB^\lambda$, $\cM^\lambda$, $\Sigma^\lambda$ from \eqref{eq: B, M, Sigma Expression}. Since $f$ is $3$-times differentiable with bounded derivatives (H.\ref{Assump: Instantaneous cost}),  the terms $f$, $\del_m f$, $\del^2_m f$ and $\del_d f$ are differentiable with bounded derivative.  Moreover, since $\cH$ is a smooth entropy,  the corresponding smooth max approximator $S$ has a bounded derivative.  In particular, $\big| \del_a \nu^\lambda(a) \big| \leq C/\lambda$ and $ \big| \del_a \eta^\lambda(a)\big| \leq C/\lambda$.  For $(\Delta m_i , \Delta d_i) :=  \Phi(m,d, i, \bs{\xi}_t) - (m,d)$, it follows from the mean value inequality that 
	\begin{align*}
	\big| \mathcal{B}^\lambda(m + \Delta m_i ,d + \Delta d_i) -  \mathcal{B}^\lambda(m,d) \big| &\leq C(1+\lambda^{-1}) \big(|\Delta m_i | + \|\Delta d_i \| \big),  &&| \mathcal{B}^\lambda| \leq C, \\
\big|	\mathcal{M}^\lambda(m + \Delta m_i ,d + \Delta d_i) - \mathcal{M}^\lambda(m,d) \big| &\leq C(1+\lambda^{-1}) \big(|\Delta m_i | + \|\Delta d_i \| \big),  &&|\mathcal{M}^\lambda| \leq C, \\
	\big| \Sigma^\lambda(m + \Delta m_i ,d + \Delta d_i) - \Sigma^\lambda(m,d) \big| &\leq C(1+\lambda^{-2}) \big(|\Delta m_i | + \|\Delta d_i \| \big),  \; \text{and} &&|\Sigma^\lambda| \leq C(1+\lambda^{-1}).
	\end{align*}
	By similar arguments as above applying to (H.\ref{Assump: Dynamic Asumption}), we show that for any  $\psi \in \big\{b_i, \mu_i, \big(\sigma_i\sigma_i^\top\big)  :i \in \mathcal{A}\big\}$, and $(m,d) \in  \Theta \times \cD$, $|\psi(m,d)| \leq C \|d\|^2$.
	\begin{align*}
	&\big| \psi(m+\Delta m_i, d + \Delta d_i) - \psi(m,d) \big| \leq   \sup_{\tilde{d} \in [d, d+ \Delta d_i], \tilde{m} \in \Theta}
	\Big( |\del_d \psi(m,\tilde{d})| \cdot \| \Delta d_i \|  +  |\del_m \psi(\tilde{m},d)| \cdot  |\Delta m_i| \Big)\\
	&\qquad \leq C \sup_{\tilde{d} \in [d, d+ \Delta d_i], \tilde{m} \in \Theta}
	\Big( \|\tilde{d} \| \cdot \| \Delta d_i \|  +  \|d\|^2 \cdot  |\Delta m_i| \Big) \leq C \big( \| d\| \cdot \| \Delta d_i \|  +  \|d\|^2 \cdot  |\Delta m_i| \big)
	\end{align*}
	where $[d, d+ \Delta d_i]$ is defined to be a rectangle in $\sR^q$ and the final inequality follows from the convexity of the norms and (H.\ref{Assump: Dynamic Asumption})$(i)$.
	
Substituting the above inequalities into \eqref{eq: L expression},  we obtain
	\begin{align*}
	&\big| L_i(m + \Delta m_i ,d + \Delta d_i) - L_i(m,d) \big| \leq C(1+\lambda^{-2}) \Big( \big(|\Delta m_i |\cdot  \|d\|^2 + \|\Delta d_i \| \cdot  \|d\|^2 + \| d\|\cdot \| \Delta d_i \|   \Big).
	\end{align*}
Finally,  by  (H.\ref{Assump: Dynamic Asumption})$(iii)-(iv)$ and Cauchy--Schwartz  inequality, $\sE |\Delta m_i | \leq C \|d\|$ and $ |\Delta d_i | \leq C \|d\|^2$. Substituting these bounds into the above inequality, the general result follows.
\end{proof}

\begin{proof}[Proof of Lemma \ref{lem: one step return error}] Let  $g(m,d) = f(m,d) + c$ where $c \in \sR$ is a given constant.  By (H.\ref{Assump: Instantaneous cost}) and Definition \ref{def: smooth entropy},  $\smaxld \circ g$ is 3-times differentiable. Consider the Taylor's approximation 
\begin{equation}
\label{eq: first estimate}
\left.
\begin{aligned}
& \big(\smaxld\circ g \big) \big( \Phi(\cdot,\cdot, i, \bs{\xi}_t) \big) - \big(\smaxld\circ g\big)\\
&\quad =  \big\langle \del_d \big(\smaxld \circ g \big) ;\Delta d_i  \big\rangle  +  \big\langle \del_m \big(\smaxld \circ g \big) ; \Delta m_i \big\rangle + \frac{1}{2} \big\langle \del^2_{m} \big(\smaxld \circ g \big) ; \Delta m_i  \Delta m_i ^\top \big\rangle + \Delta S^1_T
\end{aligned}
\right\}
\end{equation}
where all derivatives are evaluated at $(m,d)$ and $\Delta S^1_T$ denotes the remaining terms.  

By (H.\ref{Assump: Instantaneous cost}) and Definition \ref{def: smooth entropy},  the second and third derivatives of $\big(\smaxld \circ g\big)(m,d) $ are $ \cO(1 + \lambda^{-2})$.  Moreover, by (H.\ref{Assump: Dynamic Asumption}), $\sE | \Delta m_i | \leq C \|d\|$ and $ \|\Delta d_i  \|  \leq C \|d\|^2$.  Applying these bounds to the third order terms and the remaining second order term, we obtain $\sE | \Delta S^1_T | \leq C(1+\lambda^{-2}) \|d\|^3 $.  Here, the bounded constant $C \geq 0$ is uniform over $c \in \sR$.

Taking the expectation of \eqref{eq: first estimate},  we see that 
\begin{equation}
\label{eq: first estimate expectation}
\left.
\begin{aligned}
&\sE \big[ \big(\smaxld\circ g \big) \big( \Phi(\cdot,  \cdot, i, \bs{\xi}_t) \big) \big] - \big(\smaxld\circ g\big)\\
&\qquad =  \big\langle \del_d \big(\smaxld \circ g \big) ; b_i  \big\rangle  + \big\langle \del_m \big(\smaxld \circ g \big) ; \mu_i \big\rangle + \frac{1}{2} \big\langle \del^2_{m} \big(\smaxld \circ g \big) ; \sigma_i\sigma_i^\top \big\rangle +  R^1_T
\end{aligned}
\right\}
\end{equation}
where $|R^1_T| \leq C(1+\lambda^{-2})\|d\|^3$.  Now write
 \begin{equation}
 \label{eq: derivative S a}
 \left.
 \begin{aligned}
 \del_d \big(\smaxld \circ g\big)&= \sum_{i=1}^K \big(\del_a \smaxld \circ g\big)_i \big(\del_d f_i \big), \qq  \del_m \big(\smaxld \circ g\big):= \sum_{i=1}^K \big(\del_a \smaxld \circ g\big)_i \big(\del_m f_i \big)
\\	
\del^2_{m} \big(\smaxld \circ g\big) &= \sum_{i=1}^K\big(\del_a \smaxld \circ g\big)_i \big(\del^2_m f_i \big) + \sum_{i,j=1}^K\big(\del^2_a \smaxld \circ g\big)_{ij} \big(\del_m f_i \big) \big(\del_m f_i \big)^\top.
 \end{aligned}
 \right\}
 \end{equation}
From Definition \ref{def: smooth entropy},  $\del^k_a \smaxld(a) = \cO(\lambda^{1-k})$ for $k = 1,2,3$.  By (H.\ref{Assump: Instantaneous cost}),  any terms involving derivatives of $f$ in \eqref{eq: derivative S a} are uniformly bounded.  By the mean value theorem and (H.\ref{Assump: Dynamic Asumption}), \eqref{eq: first estimate expectation} yields
\begin{equation}
\label{eq: first estimate expectation final}
\left.
\begin{aligned}
&\sE \big[ \big(\smaxld\circ g \big) \big( \Phi(\cdot, \cdot, i, \bs{\xi}_t) \big) \big] - \big(\smaxld\circ g\big) \\
&\quad =  \big\langle \del_d \big(\smaxld \circ f \big) ; b_i  \big\rangle  + \big\langle \del_m \big(\smaxld \circ f \big) ; \mu_i \big\rangle + \frac{1}{2} \big\langle \del^2_{m} \big(\smaxld \circ f \big) ; \sigma_i\sigma_i^\top \big\rangle + R^2_T = L_i^\lambda +R^2_T
\end{aligned}
\right\}
\end{equation}
where $|R^2_T| \leq C(1+\lambda^{-2})\|d\|^3 + |c|(1+\lambda^{-2})\|d\|^2$.

Denote $a(m,d, i, \bs{\xi}_t) := f \big( \Phi(m,d, i, \bs{\xi}_t) \big) +  L^\lambda(m,d) \big( \sum_{s=1}^{T-1}\beta^s \big)$. Consider $c = L^\lambda(m,d) \big( \sum_{s=1}^{T-1}\beta^s \big)$. We see that $|c| \leq C (1+\lambda^{-1})\|d\|^2$. Therefore, \eqref{eq: first estimate expectation final} yields
\begin{equation}
\label{eq: first estimate use}
 \Big|\sE \big[ \smaxld \big(a(\cdot, \cdot, i, \bs{\xi}_t) \big) \big]- \big(\smaxld\circ \alpha^\lambda_T\big) - L_i^\lambda \Big| \leq C \|d\|^3 \Big(1+ \lambda^{-2} + \lambda^{-3} \|d\| \Big).
\end{equation}

 Since the first derivative of $\smaxld $ is bounded and does not depend on $\lambda$, it follows from the mean value theorem that
 {\small
 \begin{equation}
     \label{eq: second estimate use}
\left.
\begin{aligned}
& \sE \bigg|  \smaxld  \Big( \alpha^\lambda_T \big( \Phi(m,d, i, \bs{\xi}_t) \big) \Big) - \smaxld  \Big(a(m,d, i, \bs{\xi}_t) \Big)  \bigg| \leq C \sE \big| \alpha^\lambda_T \big( \Phi(m,d, i, \bs{\xi}_t)\big)  - a(m,d, i, \bs{\xi}_t) \big|  \\
&\qquad = C  \sE \bigg|\Big( \sum_{s=1}^{T-1}\beta^s \Big)L^\lambda \big( \Phi(m,d, i, \bs{\xi}_t) \big) -\Big( \sum_{s=1}^{T-1}\beta^s \Big) L^\lambda(m,d) \bigg| \leq C(1-\beta)^{-1}(1+\lambda^{-2}) \|d\|^3 
\end{aligned}
\right\}
 \end{equation}
 }
where the final inequality follows from Lemma \ref{lem: error of L}.  Combining \eqref{eq: first estimate use} and \eqref{eq: second estimate use},  the result follows.

\end{proof}

\begin{Theorem}[Tauberian theorem 1]
	\label{Thm: Tauberian}
	Let $(a_t)$ be a real-value sequence converging to $a$ and $\beta \in (0,1)$.  Then $\sum_{t=0}^{T-1} \beta^t a_{T-t} \to (1-\beta)^{-1} a$ as $T \to \infty$. 
	\begin{proof}
		Fix $\epsilon > 0$ and find $s > 0$ such that for all $t \geq s$, $|a_t-a| \leq \epsilon$. 
		Since $(a_t)$ is also bounded, we can find $T_0 > s$ such that for any $t > T_0-s$,  $\beta^{t/2} |a_u- a| \leq \epsilon$ for all $u \in \sN$.  Hence, for any $T \geq T_0$,
		\begin{align*}
		& \Big| \sum_{t=0}^{T-1} \beta^t a_{T-t} - (1-\beta)^{-1}a\Big| = \Big|\sum_{t=0}^{T-1} \beta^{t}(a_{T-t} -a) + \beta^T (1-\beta)^{-1}a\Big| \\
		& \qq \leq \sum_{t=T-s}^{T-1 }\beta^{t} |a_{T-t}-a| + \sum_{t=s+1}^{T} \beta^{T-t} |a_t-a|  + \beta^T (1-\beta)^{-1}|a| \\
		&\qq \leq  \sum_{t=T-s}^{T-1 }\beta^{t/2} \epsilon + \sum_{t=s+1}^{T} \beta^{T-t}\epsilon  + \beta^T (1-\beta)^{-1}|a| \leq (1-\sqrt{\beta})^{-1}\epsilon + (1-\beta)\epsilon +  \beta^T (1-\beta)^{-1}|a|.
		\end{align*}
Hence, $\limsup_{T \to \infty}\big| \sum_{t=0}^{T-1} \beta^t a_{T-t} - (1-\beta)^{-1}a\big| \leq (1-\sqrt{\beta})^{-1}\epsilon + (1-\beta)\epsilon $.  Since $\epsilon$ is arbitrary, we obtain  the required result.
	\end{proof}
\end{Theorem}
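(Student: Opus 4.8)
The plan is to reduce the statement to the vanishing of a "centred" sum and then to exploit the geometric decay of the weights $\beta^t$ to absorb the contribution of the early terms of $(a_t)$, which need not be close to the limit $a$. First I would note that $\sum_{t=0}^{T-1}\beta^t a = a(1-\beta^T)/(1-\beta) \to a(1-\beta)^{-1}$, so it suffices to prove that $b_T := \sum_{t=0}^{T-1}\beta^t (a_{T-t} - a) \to 0$ as $T \to \infty$. Writing $c_s := a_s - a$, we have $c_s \to 0$, and since a convergent sequence is bounded there is $B \geq 0$ with $|c_s| \leq B$ for every $s$.

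The heart of the argument is a splitting of $b_T$ according to whether the summation index $T-t$ is large (so that $c_{T-t}$ is small by convergence) or small (so that the attached weight $\beta^t$ is tiny). Fixing $\epsilon > 0$, I would choose $N$ with $|c_s| \leq \epsilon$ for all $s \geq N$, and then for $T > N$ decompose $b_T = \sum_{t=0}^{T-N}\beta^t c_{T-t} + \sum_{t=T-N+1}^{T-1}\beta^t c_{T-t}$. In the first sum every index satisfies $T-t \geq N$, so $|c_{T-t}| \leq \epsilon$ and the contribution is bounded by $\epsilon \sum_{t \geq 0}\beta^t = \epsilon(1-\beta)^{-1}$. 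In the second sum there are at most $N-1$ terms, each with $|c_{T-t}| \leq B$ and weight $\beta^t \leq \beta^{T-N+1}$, so this piece is at most $B(N-1)\beta^{T-N+1}$, which tends to $0$ as $T \to \infty$ with $N$ held fixed.

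Combining the two estimates yields $\limsup_{T\to\infty} |b_T| \leq \epsilon(1-\beta)^{-1}$, and since $\epsilon > 0$ is arbitrary I conclude $b_T \to 0$, which is the claim. Conceptually this is nothing more than the statement that the lower-triangular weight matrix with entries $\beta^{T-s}$ for $1 \leq s \leq T$ defines a regular (Toeplitz) summability method, whose action on a convergent sequence reproduces the limit up to the normalising factor $(1-\beta)^{-1}$.

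The only point requiring care --- and the one I would treat as the main obstacle --- is the contribution of the finitely many early terms $c_1,\dots,c_{N-1}$, which inherit no smallness from convergence and may be as large as $B$. The argument succeeds precisely because these terms are attached (after reindexing) to the positions carrying the smallest weights $\beta^t$ with $t$ close to $T$, so their total mass is swept to zero once the threshold $N$ is frozen and $T$ is sent to infinity. No appeal to dominated convergence is needed, since every bound here is elementary and uniform in the tail.
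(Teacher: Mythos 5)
Your proof is correct and follows essentially the same route as the paper's: both centre the sum at $a$ and split $\sum_{t=0}^{T-1}\beta^t(a_{T-t}-a)$ according to whether the index $T-t$ lies in the tail of the sequence (where convergence gives $|a_{T-t}-a|\leq\epsilon$, contributing at most $\epsilon(1-\beta)^{-1}$) or among the finitely many early terms (where the geometric weight kills the contribution). The only difference is bookkeeping: the paper absorbs the early-term block via a $\beta^{t/2}$ splitting, yielding the uniform bound $(1-\sqrt{\beta})^{-1}\epsilon$ before taking the limsup, whereas you bound that block by $B(N-1)\beta^{T-N+1}\to 0$ with $N$ frozen --- an equally valid, arguably cleaner, estimate of the same quantity.
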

  \begin{Theorem}[Tauberian theorem 2]
\label{Thm: Tauberian 2}
	Let $g: \sN_0 \times \sN_0 \to \sR$ and $g^*: \sN_0 \to \sR$ be functions with a constant $C \geq 0$ such that $|g(t,T)| \leq C t $ and $g(t,T) \to g^*(t)$ as $T \to \infty$ for all $t \in \sN$.  Then for any $\beta \in (0,1)$,  $\sum_{t=0}^{T-1} \beta^t g(t,T) \to \sum_{t=0}^\infty \beta^t g^*(t)$ as $T \to \infty$. 
	\begin{proof}
		Fix $\epsilon > 0$ and find $N > 0$ such that for all $t \geq N$ and $T \in \sN$,  $\beta^{t/2} |g(t,T) - g^*(t)| \leq \epsilon$  where such $N$ exists due to the linear growth condition. 
		
	Since $g(t,T) \to g^*(t)$ as $T \to \infty$ , we can find $T_0 \geq N$ such that for any $T \geq T_0$,  $|g(t,T) - g^*(t)| \leq \epsilon$ for all $t = 0, 1,...,N-1$.   For any $T \geq T_0$, 
	
		\begin{align*}
		& \Big|  \sum_{t=0}^{T-1} \beta^t g(t,T)-\sum_{t=0}^\infty \beta^t g^*(t)\Big|  \leq \sum_{t=0}^{N-1}\beta^{t} |g(t,T) - g^*(t)| + \sum_{t=N}^{T-1} \beta^{t}  |g(t,T) - g^*(t)|  + \sum_{t=T}^{\infty} \beta^t |g^*(t)|\\
		& \qq \leq \sum_{t=0}^{N-1}\beta^{t} \epsilon + \sum_{t=N}^{T-1} \beta^{t/2} \epsilon  + C \sum_{t=T}^{\infty} t \beta^t \leq (1-\beta)^{-1} \epsilon  + (1-\sqrt{\beta})^{-1} \epsilon +T \beta^{T-1}(1-\beta)^{-2}.
	\end{align*}
Hence, $\limsup_{T \to \infty}\big| \sum_{t=0}^{T-1} \beta^t a_{T-t} - (1-\beta)^{-1}a\big| \leq (1-\sqrt{\beta})^{-1}\epsilon + (1-\beta)\epsilon $.  Since $\epsilon$ is arbitrary, we obtain  the required result.
	\end{proof}
\end{Theorem}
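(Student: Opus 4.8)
The plan is to prove the convergence by a head--tail decomposition of the finite sum, exploiting that the geometric weight $\beta^t$ dominates the allowed linear growth $|g(t,T)| \le Ct$. First I would check that the limiting series is well defined: letting $T \to \infty$ in $|g(t,T)| \le Ct$ gives $|g^*(t)| \le Ct$, and since $\sum_{t=0}^\infty t\beta^t < \infty$ for $\beta \in (0,1)$, the target series $\sum_{t=0}^\infty \beta^t g^*(t)$ converges absolutely. The same bound shows that the tail $\sum_{t=T}^\infty \beta^t |g^*(t)| \le C\sum_{t=T}^\infty t\beta^t$ vanishes as $T \to \infty$.

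Next, fix $\epsilon > 0$. The decisive step is to control $g(t,T) - g^*(t)$ uniformly in $T$ once $t$ is large. Since both terms are bounded by $Ct$, we have $|g(t,T) - g^*(t)| \le 2Ct$, and because $t\beta^{t/2} \to 0$ there is an integer $N$, independent of $T$, such that $\beta^{t/2}|g(t,T) - g^*(t)| \le \epsilon$ for every $t \ge N$. For the remaining finitely many indices $t = 0, 1, \dots, N-1$, the pointwise hypothesis $g(t,T) \to g^*(t)$ lets me choose $T_0 \ge N$ with $|g(t,T) - g^*(t)| \le \epsilon$ whenever $T \ge T_0$.

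With these ingredients in place I would, for $T \ge T_0$, bound $\big| \sum_{t=0}^{T-1} \beta^t g(t,T) - \sum_{t=0}^\infty \beta^t g^*(t) \big|$ by splitting it into three pieces: the head $\sum_{t=0}^{N-1}\beta^t |g(t,T) - g^*(t)| \le (1-\beta)^{-1}\epsilon$; the middle range, rewritten as $\sum_{t=N}^{T-1}\beta^{t/2}\big(\beta^{t/2}|g(t,T)-g^*(t)|\big) \le (1-\sqrt{\beta})^{-1}\epsilon$; and the leftover tail $\sum_{t=T}^\infty \beta^t|g^*(t)| \le C\sum_{t=T}^\infty t\beta^t \to 0$. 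Taking $\limsup_{T\to\infty}$ leaves a bound of the form $(1-\beta)^{-1}\epsilon + (1-\sqrt{\beta})^{-1}\epsilon$, and since $\epsilon > 0$ is arbitrary the claimed convergence follows. This mirrors the splitting already used for Theorem \ref{Thm: Tauberian}.

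The main obstacle is the middle range: the crude estimate $|g(t,T) - g^*(t)| \le 2Ct$ is not summable against $\beta^t$ in a way that is uniform enough to conclude directly, so one must peel off a factor $\beta^{t/2}$, turning the linear growth $t\beta^{t/2}$ into something that decays while leaving a geometrically summable remainder $\sum \beta^{t/2}$. Everything else reduces to routine comparison with the convergent series $\sum_{t} t\beta^t$.
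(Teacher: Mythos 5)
Your proposal is correct and follows essentially the same argument as the paper's proof: the same head/middle/tail decomposition, the same device of peeling off a factor $\beta^{t/2}$ to absorb the linear growth uniformly in $T$, and the same comparison of the tail with $C\sum_{t\geq T} t\beta^t$. Your explicit verification that $|g^*(t)|\leq Ct$ (hence absolute convergence of the limit series) is a detail the paper leaves implicit, but it does not change the route.
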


\begin{Lemma}
\label{lem: general derivative}
Consider the case when $m \in \sR^p$ and $d \in \sS^p_+$.  Suppose that  $\sigma_i \sigma_i^\top(m,d) = - b_i (m,d)$, $\mu_i(m,d) = 0$ and $f_i(m,d) = \int_{\sR} r_i(c_i^\top m + (c_i^\top d c_i+ \rho_i^2)^{1/2} z)\varphi(z) \d z$ where $r_i : \sR \to \sR$ is sufficiently smooth and $\varphi(z) = (2\pi)^{-1/2} \exp(-z^2/2)$. Then for every $i \in [K]$,
$$L_i^\lambda(m,d) =\frac{1}{2 \lambda} \Big\langle \sum_{j,k =1}^K\Big(\eta^{\lambda}_{jk}\big((f(m,d) \big) \big(\del_m f_j(m,d) \big) \big(\del_m f_k(m,d) \big)^\top\Big) ; \sigma_i \sigma_i^\top (m,d) \Big\rangle.$$
\end{Lemma}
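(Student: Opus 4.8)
The plan is to substitute the special structure directly into the general formula \eqref{eq: L expression}, using the expressions for $\cB^\lambda$, $\cM^\lambda$, $\Sigma^\lambda$ in \eqref{eq: B, M, Sigma Expression}, and then to exploit a heat-kernel identity relating the $d$-derivative of $f_j$ to its second $m$-derivative. First I would note that the hypotheses $\mu_i(m,d)=0$ and $\sigma_i\sigma_i^\top(m,d) = -b_i(m,d)$ immediately simplify \eqref{eq: L expression}: the term $\langle \cM^\lambda ; \mu_i\rangle$ vanishes, while $\langle \cB^\lambda ; b_i\rangle = -\langle \cB^\lambda ; \sigma_i\sigma_i^\top\rangle$. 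Hence $L_i^\lambda = -\langle \cB^\lambda ; \sigma_i\sigma_i^\top\rangle + \tfrac{1}{2}\langle \Sigma^\lambda ; \sigma_i\sigma_i^\top\rangle$, and it remains only to evaluate $\cB^\lambda$ and $\Sigma^\lambda$ for the given $f$.

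The crux of the argument, and the step I expect to require the most care, is the identity $\del_d f_j = \tfrac{1}{2} \del_m^2 f_j$. Writing $f_j(m,d) = \sE[r_j(c_j^\top m + s_j Z)]$ with $Z \sim N(0,1)$ and $s_j := (c_j^\top d c_j + \rho_j^2)^{1/2}$, a chain-rule computation gives $\del_m f_j = c_j\, \sE[r_j'(\cdots)]$ and $\del_m^2 f_j = c_j c_j^\top\, \sE[r_j''(\cdots)]$, while differentiating in $d$ through $\del_d s_j = \tfrac{1}{2 s_j} c_j c_j^\top$ yields $\del_d f_j = \tfrac{1}{2 s_j} c_j c_j^\top\, \sE[Z\, r_j'(\cdots)]$. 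Applying Gaussian integration by parts (Stein's lemma), $\sE[Z\, r_j'(c_j^\top m + s_j Z)] = s_j\, \sE[r_j''(c_j^\top m + s_j Z)]$, the factors of $s_j$ cancel and I obtain $\del_d f_j = \tfrac{1}{2} c_j c_j^\top\, \sE[r_j''(\cdots)] = \tfrac{1}{2} \del_m^2 f_j$. This is the single nontrivial input; everything else is bookkeeping.

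Finally, I would substitute this identity into \eqref{eq: B, M, Sigma Expression}. Since $\cB^\lambda = \sum_j \nu_j^\lambda(f)\, \del_d f_j = \tfrac{1}{2} \sum_j \nu_j^\lambda(f)\, \del_m^2 f_j$ and the first summand of $\Sigma^\lambda$ is $\sum_j \nu_j^\lambda(f)\, \del_m^2 f_j$, the term $-\langle \cB^\lambda ; \sigma_i\sigma_i^\top\rangle$ cancels exactly against the corresponding half of $\tfrac{1}{2}\langle \Sigma^\lambda ; \sigma_i\sigma_i^\top\rangle$. What survives is only the contribution of the $\tfrac{1}{\lambda}\sum_{j,k}\eta_{jk}^\lambda(f)(\del_m f_j)(\del_m f_k)^\top$ term in $\Sigma^\lambda$, giving $L_i^\lambda = \tfrac{1}{2\lambda}\langle \sum_{j,k}\eta_{jk}^\lambda(f)(\del_m f_j)(\del_m f_k)^\top ; \sigma_i\sigma_i^\top\rangle$, which is precisely the claimed expression.
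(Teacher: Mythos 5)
Your proposal is correct and follows essentially the same route as the paper: compute $\del_m^2 f_j$ and $\del_d f_j$ explicitly, use Gaussian integration by parts (Stein's lemma) to obtain the heat-equation identity $\del_d f_j = \tfrac{1}{2}\del_m^2 f_j$, and then substitute into \eqref{eq: L expression} so that the $\cB^\lambda$ term (via $b_i = -\sigma_i\sigma_i^\top$) cancels the $\nu_j^\lambda\,\del_m^2 f_j$ part of $\Sigma^\lambda$, leaving only the $\eta^\lambda$ contribution. Your write-up is in fact more explicit than the paper's (whose proof states the key identity with a typo, writing $\del^2_m f_i = \tfrac{1}{2}\del^2_d f_i$ where the heat identity $\del_d f_i = \tfrac{1}{2}\del^2_m f_i$ is meant), but the mathematical content is identical.
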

\begin{proof}
Since $r_i$ is sufficiently smooth,   $\del^2_m f(m,d) = c_i c_i^\top \int_\sR  r_i''(c_i^\top m + (c_i^\top d c_i+ \rho_i^2)^{1/2} z)\varphi(z) \d z$ and $\del_d f(m,d) = \frac{1}{2}(c_i^\top d c_i+ \rho_i^2)^{-1/2}c_i c_i^\top \int_\sR  r_i'(c_i^\top m + (c_i^\top d c_i+ \rho_i^2)^{1/2} z) \varphi(z) z\d z$. By integration by parts, $\del^2_m f_i(m,d) = \frac{1}{2}\del^2_d f_i(m,d)$.  Substituting this into the expression for $L^\lambda$, the result follows.
\end{proof}
\begin{Lemma}
\label{lem: add info derive} 
For the bandit with additional information described in Section \ref{sec: address limit}, we have 
$$L_i^\lambda(m,d) =\frac{1}{2 \lambda} \sum_{j = 1}^K \eta^\lambda_{jj} \big(f(m,d)\big) d_{jj}^2\Big(\frac{s_{ij}}{1+ d_{jj}s_{ij}}\Big)  g_j^2(m,d)$$
where $g_i(m,d) =  \int_{\sR} r'_i(e_i^\top m + (e_i^\top d e_i+ s_{ii}^{-1})^{1/2} z)\varphi(z) \d z$.
\begin{proof}We note that $f_i(m,d) = \int_{\sR} r_i(e_i^\top m + (e_i^\top d e_i+ s_{ii}^{-1})^{1/2} z)\varphi(z) \d z$ and
 $$\sigma_i \sigma_i^\top(m,d) = - b_i(m,d) = \text{Diag}\bigg(d_{11}^2 \Big(\frac{s_{i1}}{1+ d_{11}s_{i1}}\Big), ..., d_{KK}^2 \Big(\frac{s_{iK}}{1+ d_{KK}s_{iK}}\Big) \bigg).$$
Substituting these into Lemma \ref{lem: general derivative}, the result follows.
\end{proof}
\end{Lemma}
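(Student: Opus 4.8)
The plan is to verify that the bandit with additional information (Example \ref{Ex: bandit adding info}) satisfies the hypotheses of Lemma \ref{lem: general derivative}, and then to exploit the coordinate-wise dependence of each reward on the estimate $m$ to collapse the double sum appearing in $L^\lambda_i$ to the single diagonal sum claimed. First I would identify the data entering Lemma \ref{lem: general derivative}. Since the reward $r_i(Y^{(i)})=r_i(Y^{(i,i)})$ depends only on the $i$-th observed coordinate, and $Y^{(i,i)}\mid\cG_t\sim N(m_i,\,d_{ii}+s_{ii}^{-1})$ marginally, the expected reward is $f_i(m,d)=\int_{\sR} r_i\big(e_i^\top m+(e_i^\top d\,e_i+s_{ii}^{-1})^{1/2}z\big)\varphi(z)\,\d z$. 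This is exactly the form required in Lemma \ref{lem: general derivative}, with the lemma's $c_i$ taken to be $e_i$ (not the observation matrix $c_i=I_K$ of the example) and $\rho_i^2=s_{ii}^{-1}$. Consequently $g_i(m,d)=\del_{m_i}f_i(m,d)$, and because $f_j$ depends on $m$ only through $m_j$, one has $\del_m f_j(m,d)=g_j(m,d)\,e_j$; this last observation is the key structural fact.

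Next I would check the two dynamical hypotheses. Specialising the Gaussian update \eqref{eq: Gaussian mean propagation} to this example, where the observation matrix is $c_i=I_K$ and both $P_i=\Diag(s_{i1},\dots,s_{iK})$ and $d=\Diag(d_{11},\dots,d_{KK})$ are diagonal, gives a purely stochastic mean increment, so $\mu_i(m,d)=0$, and a deterministic precision decrement $b_i(m,d)=-\,d(P_i^{-1}+d)^{-1}d=-\Diag\!\big(d_{jj}^2\,s_{ij}/(1+d_{jj}s_{ij})\big)_{j=1}^K$. The identity $\sigma_i\sigma_i^\top(m,d)=-b_i(m,d)$ then follows from the law of total variance: writing $\mathrm{Var}(M_{t+1}\mid\cG_t)=\mathrm{Var}(\bs{\theta}\mid\cG_t)-\sE[\mathrm{Var}(\bs{\theta}\mid\cG_{t+1})\mid\cG_t]=d-(d+b_i)=-b_i$, and noting that $\mathrm{Var}(M_{t+1}\mid\cG_t)=\mathrm{Var}(M_{t+1}-M_t\mid\cG_t)=\sigma_i\sigma_i^\top$ by the form of \eqref{eq: transition map} and (H.\ref{Assump: Dynamic Asumption})$(iii)$.

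With these facts in place I would invoke Lemma \ref{lem: general derivative} to obtain $L_i^\lambda(m,d)=\tfrac{1}{2\lambda}\big\langle \sum_{j,k=1}^K \eta^\lambda_{jk}(f(m,d))\,(\del_m f_j)(\del_m f_k)^\top\,;\,\sigma_i\sigma_i^\top(m,d)\big\rangle$. Substituting $\del_m f_j=g_j e_j$ turns the inner tensor into the matrix whose $(j,k)$-entry is $\eta^\lambda_{jk}(f)\,g_j g_k$; contracting this via $\langle P;Q\rangle=\Tr(P^\top Q)$ against the \emph{diagonal} matrix $\sigma_i\sigma_i^\top=\Diag\big(d_{jj}^2 s_{ij}/(1+d_{jj}s_{ij})\big)$ annihilates all off-diagonal terms, leaving precisely $\tfrac{1}{2\lambda}\sum_{j=1}^K \eta^\lambda_{jj}(f(m,d))\,d_{jj}^2\big(s_{ij}/(1+d_{jj}s_{ij})\big)\,g_j^2(m,d)$, as claimed. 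The only genuine obstacle is the correct specialisation of the Gaussian filtering update together with the Kalman identity $\sigma_i\sigma_i^\top=-b_i$; once those diagonal expressions are secured, the reduction of the double sum to the stated single sum is immediate from the fact that each $f_j$ depends on $m$ through $m_j$ alone.
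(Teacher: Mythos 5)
Your proof is correct and follows essentially the same route as the paper's: verify the hypotheses of Lemma \ref{lem: general derivative} (with $\mu_i = 0$, $\sigma_i\sigma_i^\top = -b_i$ diagonal, and $f_i$ of the required scalar form with the lemma's $c_i$ taken as $e_i$ and $\rho_i^2 = s_{ii}^{-1}$), substitute, and collapse the double sum using $\partial_m f_j = g_j e_j$ together with the diagonality of $\sigma_i\sigma_i^\top$. The only difference is that you make explicit steps the paper leaves implicit (the computation of $b_i$ from \eqref{eq: Gaussian mean propagation}, the law-of-total-variance justification of $\sigma_i\sigma_i^\top = -b_i$, and the contraction that kills the off-diagonal terms), which are correct elaborations rather than a different argument.
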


\begin{Lemma}
\label{lem: linear derive} 
For the one-dimensional linear bandit with $Y^{(i)} \sim N(c_i^\top\bs{\theta}, P^{-1}_i)$ and prior $\bs{\theta} \sim N(m,d)$ with Shannon entropy (Remark \ref{Rem: Shannon Entropy}), we have
{\small
$$L_i^\lambda(m,d) = \frac{( c_i^\top d c_i + P_i^{-1})^{-1}}{2 \lambda}  \bigg( \sum_{j=1}^K \nu_j^\lambda \big((f(m,d) \big) g_j(m,d)^2 \big(c_i^\top d c_j \big)^2 - \bigg(\sum_{j=1}^K \nu_j^\lambda \big((f(m,d) \big) g_j(m,d) \big(c_i^\top d c_j \big) \bigg)^2 \bigg)$$}
where $g_i(m,d) =  \int_{\sR} r'_i(c_i^\top m + (c_i^\top d c_i+ P_i^{-1})^{1/2} z)\varphi(z) \d z$.
\end{Lemma}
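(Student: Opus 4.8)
The plan is to reduce everything to Lemma~\ref{lem: general derivative}: I will first verify that its hypotheses ($\mu_i = 0$ and $\sigma_i\sigma_i^\top = -b_i$, with $f_i$ of the stated Gaussian-smoothed form and $\rho_i^2 = P_i^{-1}$) hold for the one-dimensional linear bandit, and then evaluate the two ingredients entering that lemma, namely $\del_m f_j$ and $\sigma_i\sigma_i^\top$.

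First, differentiating under the integral sign in $f_j(m,d) = \int_\sR r_j\big(c_j^\top m + (c_j^\top d c_j + P_j^{-1})^{1/2} z\big)\varphi(z)\,\d z$ produces a factor $c_j$ from the chain rule and leaves exactly the integral defining $g_j$, so $\del_m f_j(m,d) = c_j\, g_j(m,d)$. Next I read off $\sigma_i$ and $b_i$ from the Gaussian propagation \eqref{eq: Gaussian mean propagation}. Writing $s_i := c_i^\top d c_i + P_i^{-1}$ (a scalar when $q=1$), the diffusion coefficient collapses: the factor $(d - d c_i s_i^{-1} c_i^\top d)c_i$ equals $d c_i\, s_i^{-1} P_i^{-1}$ since $s_i - c_i^\top d c_i = P_i^{-1}$, and multiplying by the remaining $P_i\, s_i^{1/2}$ gives $\sigma_i = s_i^{-1/2}\, d c_i$, whence $\sigma_i\sigma_i^\top = s_i^{-1}\, d c_i c_i^\top d$. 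The second line of \eqref{eq: Gaussian mean propagation} gives $b_i(m,d) = -s_i^{-1} d c_i c_i^\top d$, confirming $\sigma_i\sigma_i^\top = -b_i$ and $\mu_i = 0$, so Lemma~\ref{lem: general derivative} applies with $\rho_i^2 = P_i^{-1}$.

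It then remains to substitute and simplify. Using $\del_m f_j = c_j g_j$ together with the cyclicity of the trace, $\langle c_j c_k^\top\,;\, s_i^{-1} d c_i c_i^\top d\rangle = s_i^{-1}(c_i^\top d c_j)(c_i^\top d c_k)$ (with the symmetry of $d$), so the expression from Lemma~\ref{lem: general derivative} becomes $\tfrac{s_i^{-1}}{2\lambda}\sum_{j,k}\eta^\lambda_{jk}(f(m,d))\, g_j g_k\,(c_i^\top d c_j)(c_i^\top d c_k)$. Inserting the Shannon-entropy form $\eta^\lambda_{jk}(a) = \nu_j^\lambda(a)\big(\sI(j=k) - \nu_k^\lambda(a)\big)$ separates the $\sI(j=k)$ part, which gives the diagonal sum $\sum_j \nu_j^\lambda g_j^2 (c_i^\top d c_j)^2$, from the remaining $\nu_j^\lambda\nu_k^\lambda$ part, which factors as the perfect square $-\big(\sum_j \nu_j^\lambda g_j\,(c_i^\top d c_j)\big)^2$; recognising $s_i^{-1} = (c_i^\top d c_i + P_i^{-1})^{-1}$ then yields precisely the claimed formula.

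The computations are routine; the only step requiring care is the Woodbury-type collapse of the diffusion coefficient to the clean form $\sigma_i = s_i^{-1/2} d c_i$, together with the matching identity $\sigma_i\sigma_i^\top = -b_i$, since it is this verification that lets me invoke Lemma~\ref{lem: general derivative} rather than recomputing $L_i^\lambda$ from the definition \eqref{eq: L expression}.
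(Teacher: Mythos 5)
Your proposal is correct and takes essentially the same route as the paper's own proof: both compute $\sigma_i(m,d) = dc_i\big(c_i^\top d c_i + P_i^{-1}\big)^{-1/2}$ via the same Woodbury-type collapse, verify $\sigma_i\sigma_i^\top = -b_i$, and then substitute $\del_m f_j = c_j g_j$ into Lemma~\ref{lem: general derivative} before expanding the Shannon-entropy Hessian $\eta^\lambda_{jk} = \nu^\lambda_j(\sI(j=k)-\nu^\lambda_k)$ into the diagonal sum minus the perfect square. Your write-up is in fact slightly more explicit than the paper's (which leaves the final $\eta^\lambda$ expansion and the verification of $\mu_i=0$ implicit), but there is no substantive difference in approach.
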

\begin{proof}We note that $f_i(m,d) = \int_{\sR} r_i(c_i^\top m + (c_i^\top d c_i+ P_i^{-1})^{1/2} z)\varphi(z) \d z$ and we have
\begin{align*}
\sigma_i(m,d) &=  dc_i\Big(1- \frac{c_i^\top dc_i}{P_i^{-1} + c_i^\top d c_i} \Big) P_i \big( c_i^\top d c_i + P_i^{-1}\big)^{1/2} = dc_i\big( c_i^\top d c_i + P_i^{-1}\big)^{-1/2}.
\end{align*}
In particular,
 $\sigma_i\sigma_i(m,d)^\top = - b_i(m,d) = dc_i c_i^\top d ( c_i^\top d c_i + P_i^{-1})^{-1}.$ Substituting these into Lemma \ref{lem: general derivative},  we can write $L_i^\lambda(m,d) =\frac{1}{2 \lambda} \big\langle \sum_{j,k =1}^K\big(\eta^{\lambda}_{jk}\big((f(m,d) \big) g_i(m,d) g_j(m,d) c_i c_j^\top\big) ; \sigma_i \sigma_i^\top (m,d) \big\rangle$ and obtain the required result.
\end{proof}

\bibliographystyle{siam}

\bibliography{bibliography.bib}

\paragraph{Acknowledgments.} Samuel Cohen also acknowledges the support of the Oxford-Man Institute for Quantitative Finance, and the UKRI Prosperity Partnership Scheme (FAIR) under the EPSRC Grant EP/V056883/1, and the Alan Turing Institute. Tanut Treetanthiploet
thanks the University of Oxford for research support while completing this work, and
acknowledges the support of the Development and Promotion of Science and Technology
Talents Project (DPST) of the Government of Thailand and the Alan Turing Institute.
 \end{document}